\newcommand{\Q}{\mathbb{Q}}
\newcommand{\R}{\mathbb{R}}
\newcommand{\C}{\mathbb{C}}
\newcommand{\Z}{\mathbb{Z}}
\newcommand{\HH}{\mathcal{H}}
\newcommand{\cl}{\operatorname{Cl}}
\newcommand{\Tr}{\operatorname{Tr}}
\newcommand{\GL}{\operatorname{GL}}
\newcommand{\SL}{\operatorname{SL}}
\newcommand{\Bil}{\operatorname{Bil}}
\newcommand{\Stab}{\operatorname{Stab}}
\def\ll#1{{\left\langle{#1}\right\rangle}}
\def\id#1{{\mathfrak{#1}}}
\def\idn#1{{\norm(\mathfrak{#1})}}
\def\hatx#1{{{\widehat{#1}}^\times}}
\newcommand{\p}{_{\id p}}
\newcommand{\pv}{{\id p}}
\newcommand{\bbu}{\mathbbm{1}}
\newcommand{\ssl}{^{l}}
\newcommand{\sslu}{^{l,u}}
\newcommand{\tlu}{\vartheta\sslu}
\newcommand{\etalD}{\eta^l_D}
\newcommand{\psilD}{\psi^l_D}
\newcommand{\mba}{\mathbf{a}}
\newcommand{\mbf}{\mathbf{f}}
\newcommand{\mbk}{\mathbf{k}}
\newcommand{\pip}{\pi_{\id p}}
\def\smat#1{\left(\begin{smallmatrix} #1 \end{smallmatrix} \right)}
\def\pmat#1{\left(\begin{matrix} #1 \end{matrix} \right)}
\def\abs#1{\left\vert{#1}\right\vert}
\def\num#1{\left\vert{#1}\right\vert}
\def\numset#1{\num{\left\{{#1}\right\}}}
\def\omegaN{\omega(\id N)}
\newcommand{\kro}[2]{\left(\frac{#1}{#2}\right)}
\newcommand{\subp}[1][]{\ifthenelse{\equal{#1}{}}{_{\id p}}{_{\id{#1}}}}
\newcommand{\Mhk}[1][]{\mathcal{M}^{#1}_{\mathbf{3/2+k}}(4\id{N},\chic{-u})}
\newcommand{\Shk}[1][]{\mathcal{S}^{#1}_{\mathbf{3/2+k}}(4\id{N},\chic{-u})}
\newcommand{\Sk}{\mathcal{S}_{\mathbf{2} + 2\mathbf{k}}(\id{N})}
\DeclareMathOperator{\vardot}{\,\cdot\,}
\DeclareMathOperator{\norm}{\mathcal{N}}
\DeclareMathOperator{\trace}{\mathcal{T}}
\DeclareMathOperator{\disc}{\Delta}
\newcommand{\sgn}{\operatorname{sgn}}
\newcommand{\chic}[2][]{\chi_{#1}^{#2}}
\newcommand{\cond}[1]{\mathfrak{f}_{#1}}
\newcommand{\OO}[1][]{\mathcal{O}_{\mkern-2mu #1}}
\newcommand{\OOx}[1][]{\mathcal{O}_{\mkern-3mu #1}^{\times}}
\newcommand{\What}{\widehat{W}}
\newcommand{\hatOO}[1][]{\widehat{\mathcal{O}}_{\mkern-2mu #1}}
\newcommand{\hatOOx}[1][]{\hatx{\mathcal{O}}_{\mkern-3mu #1}}
\def\Fnc{F^\times \backslash (F^\times)^2}
\def\Fc{(F^\times)^2}
\def\dF{d_{\mkern-2mu F}}
\def\IF{\mathcal{J}_{\mkern-2mu F}}
\def\hF{h_{\mkern-2mu F}}
\def\tK{t_{\mkern-2mu K}}
\def\mK{m_{\mkern-2mu K}}
\def\muR{\mu_{\mkern-2mu R}}
\def\Tm{\mathop{T_{\mkern-1mu \id{m}}}}
\def\Tg{\mathop{T_{\mkern-2mu g}}}
\def\sC{\mathcal C}
\def\DC{\mathcal D(\sC)}
\theoremstyle{plain}
\newtheorem{prop}{Proposition}
\newtheorem{thm}{Theorem}
\newtheorem{coro}{Corollary}
\newtheorem{lemma}{Lemma}
\theoremstyle{remark}
\newtheorem{rmk}{Remark}
\numberwithin{prop}{section}
\numberwithin{equation}{section}
\numberwithin{lemma}{section}
\numberwithin{coro}{section}
\numberwithin{thm}{section}
\numberwithin{rmk}{section}
\begin{document}

\title
[An explicit Waldspurger formula for Hilbert modular forms II]
{An explicit Waldspurger formula\\ for Hilbert modular forms II}

\author{Nicol\'as Sirolli}
\address{Universidad de Buenos Aires and IMAS-CONICET, Buenos Aires, Argentina}
\email{nsirolli@dm.uba.ar}

\author{Gonzalo Tornar\'ia}
\address{Universidad de la República, Montevideo, Uruguay}
\email{tornaria@cmat.edu.uy}

\keywords{Waldspurger formula, Hilbert modular forms, Shimura
cor\-re\-spon\-dence}
\subjclass[2010]{Primary: 11F67, 11F41, 11F37}

\begin{abstract} 

We describe a construction of preimages for the Shimura map on
Hilbert modular forms using generalized theta series, and give an explicit
Waldspurger type formula relating their Fourier coefficients to central values
of twisted $L$-functions.
This formula extends our previous work, allowing to compute these central values
when the main central value vanishes.
\end{abstract}

\maketitle

\section{Introduction}

Let $g$ be a normalized Hilbert cuspidal newform over a totally real number 
field $F$, of non-trivial level $\id N$, weight $\mathbf{2}$ + 
2$\mathbf{k}$ and trivial central character.
For each $\id p \mid \id N$ denote by $\varepsilon_g(\id p)$ the eigenvalue of
the $\id p$-th Atkin--Lehner involution acting on $g$, and let $\mathcal{W}^{-} =
\{\id p \mid \id N\,:\,\varepsilon_g(\id p) = -1\}$. We make the following
hypothesis on $g$.

\begin{enumerate}[label=\textbf{Hg.},ref={Hg},
                  labelindent=\parindent,leftmargin=*,
                  topsep=\medskipamount
                  ]
   \setcounter{enumi}{1}
    \item  $v\p(\id N)$ is odd for every $\id p \in \mathcal{W}^-$.\label{hyp:JL}
\end{enumerate}

For $l,D \in F^\times$ such that $\Delta =lD$ is totally negative, denote by
$L_{l,D}(s,g) = L(s,g\otimes \chic{l}) L(s,g\otimes \chic{D})$ 
the Rankin--Selberg convolution \textit{L}-function of $g$ by the genus character
associated to the pair $(l,D)$, normalized with center of symmetry at $s=1/2$.
The main result of this article is stated in Theorem~\ref{thm:main_thm}; in the
simpler form given by Corollary~\ref{coro:main} it claims that,
under certain hypotheses on $l$,
there exists a Hilbert cuspidal form $f\sslu$ in the Kohnen plus subspace of weight
$\mathbf{3/2+k}$ and level $4 \id N$ whose Fourier coefficients
$\lambda(-uD,\id a;f\sslu)$ satisfy
 \[
	 L_{l,D}(1/2,g)= \frac{c_{\Delta}}{(-\Delta)^\mathbf{k+1/2}} \,
	 \abs{ \lambda(-uD,\id a;f\sslu) }^2
 \]
for every \emph{permitted} $D$ such that $\chi^l$ and $\chi^D$ have coprime
conductors.
Here $u$ is an auxiliary unit such that $lu$ is totally positive;
$c_{\Delta}$ and $\id a$ are, respectively, an 
explicit positive rational number and a fractional ideal of $F$ depending only,
respectively, on $\Delta$ and on $D$.
Furthermore if $L(1/2,g\otimes\chi\ssl) \neq 0$, which can be achieved by
choosing $l$ suitably, then $f\sslu \neq 0$ and, at least when $\id N$ is odd
and square-free, it maps to $g$ under the Shimura correspondence.
Actually, in this case we construct a linearly independent family of preimages
for this correspondence, as shown in Corollary \ref{coro:family}.

In our previous article \cite{waldspu} we consider the case $l=1$, in which the
above formula gives no information about the values $L(1/2,g\otimes \chic{D})$
when $L(1/2,g) = 0$. In this article we get rid of this restriction, by using
generalized theta series which give the cusp form $f\sslu$. These theta series
are defined using a generalization of the weight functions introduced
in \cite{mrvt,tornatesis}.

Furthermore, the formula given in \cite{waldspu} required that 
$\num{ \mathcal{W}^- }$ and $[F:\Q]$ had the same parity,
and that $(-1)^{\mathbf{k}} = 1$.
These hypotheses are no longer required, as long as $l$ is chosen suitably (see
Hypotheses~\ref{hyp:paridad} and \ref{hyp:peso_signo} in
Sect.~\ref{sect:main}).

Besides extending \cite{waldspu}, and thus the explicit formulas with $l=1$
given in \cite{gross,bsp} when $F = \Q$ and in \cite{Xue-Waldspu} for more
general $F$, our main result implies part of \cite[Theorem 1.1]{mao}, where the
author considers the case $F = \Q$, odd prime level and weight $2$.
In that setting, the case $D>0$ is covered by Mao, but not (yet) by our formula.

The proof of our main result is based on the formulas of \cite{zhang-gl2} and
\cite{Xue-Rankin} giving central values of Rankin $L$-functions in terms of
geometric pairings; we relate the latter to Fourier coefficients of
half-integral weight modular forms by considering special points, as it is done
in the classical setting (see \cite{gross}).

\medskip
This article is organized as follows.
In Sect.~\ref{sect:quat_forms} we recall some basic facts about the space of
quaternionic modular forms.
In Sect.~\ref{sect:half_int} we introduce weight functions and we show how to
obtain half-integral weight Hilbert modular forms out of these functions and
quaternionic modular forms.
In Sect.~\ref{sect:special_pts} we give a formula for the Fourier
coefficients of these half-integral weight modular forms in terms of special
points and the height pairing.
In Sect.~\ref{sect:height_geom} we relate central values of twisted
$L$-functions to the height pairing.
In Sect.~\ref{sect:orders} we state an auxiliary result, needed for the proof
of the main result of this article, which we give in Sect.~\ref{sect:main}.
Finally, in the Epilogue we show that when an auxiliary parameter as $u$ above
does not exist, we can still compute central values in terms of Fourier
coefficients of certain skew-holomorphic theta series.

We used our main result to compute the central values $L(1/2,g\otimes\chic{D})$
in some examples where $L(1/2,g)$ vanishes. 
Our calculations were done with Sagemath~\cite{sage} and Magma~\cite{magma}.
The data obtained can be found in \cite{code}.

\subsection*{Notation summary}

We fix a totally real number field $F$ of discriminant $\dF$,
with ring of integers $\OO$ and different ideal $\id{d}$.
We denote by $\IF$ the group of fractional ideals of $F$, and we write $\cl(F)$
for the class group, $C_F$ for the idele class group, and $\hF$ for the class 
number.
We denote by $\mathbf{a}$ the set of embeddings $\tau : F \hookrightarrow \R$,
and by $\mathbf{f}$ the set of nonzero prime ideals $\pv$ of $F$.
For $\xi \in F^\times$ we let 
$F^\xi = \{\zeta \in F^\times\,:\,\sgn(\xi_\tau \zeta_\tau) = 1 \; \forall\,
\tau \in \mathbf{a}\}$, and
we let $\sgn(\xi) = \prod_{\tau \in \mba} \sgn(\xi_\tau)$.
Given $\mathbf{k} = (k_\tau) \in \Z^\mathbf{a}$ and $\xi \in F$, 
we let $\xi^\mathbf{k} = \prod_{\tau \in \mathbf{a}} \xi_\tau^{k_\tau}$.
We use $\pv$
as a subindex to denote completions of global objects at $\pv$, as well as to
denote local objects.
Given an integral ideal $\id N \subseteq \OO$ we let
$\omegaN = \numset{\id p\in\mathbf{f}\,:\,\id p \mid \id N}$.
Given $\id p$ we denote by $\pip$ a local uniformizer at $\id p$, and we let 
$v\p$ denote the $\id p$-adic valuation.

Given a character $\chi$ of $C_F$, we denote
\[
 \chi_\mathbf{a} = \prod_{\tau \in \mathbf{a}} \chi_\tau, \qquad
 \chi_\mathbf{c} = \prod_{\pv \mid \id{c}} \chi\p, 
\]
for every $\id c \subseteq \OO$. If $\chi$ has conductor $\id f$, we let 
$\chi_*$ denote the induced character on ideals prime to $\id f$. Given $\xi \in 
F^\times$, we denote by the $\chic{\xi}$ the character of 
$C_F$ corresponding to the extension $F(\sqrt{\xi})/F$, and we denote 
its conductor by $\cond{\xi}$. 

Given a quadratic extension $K/F$, we let $\OO[K]$ be the maximal order.
If $K/F$ is totally imaginary we let $\tK = [\OOx[K]:\OOx]$, and let $\mK \in
\{1,2\}$ be the order of the kernel of the natural map $\cl(F)\to\cl(K)$.
For $x\in K$ we let $\disc(x)=(x-\overline{x})^2$.
If $K = F(\sqrt \xi)$ with $\xi \in \Fnc$, given $\id a \in \IF$, we say that
the pair $(\xi,\id a)$ is a \emph{discriminant} if there exists $\omega \in K$
with $\disc(\omega) = \xi$ such that $\OO \oplus \id a\,\omega$ is an order in $K$.
When this order equals $\OO[K]$,
we say that the discriminant $(\xi,\id a)$ is \emph{fundamental}. 
For completeness, we also say that $(\xi^2,\xi^{-1} \OO)$ is a fundamental
discriminant.

Given a quaternion algebra $B/F$ we denote by $\norm:B^\times\to F^\times$ and
$\trace:B\to F$ the reduced norm and trace maps, and we use $\norm$ and $\trace$
to denote other norms and traces as well. 
We denote by $\widehat B = \prod'\subp B\p$ and $\hatx B = \prod'\subp
B\p^\times$ the corresponding restricted products, and we use the same notation
in other contexts.

Finally, given a level $\id N \subseteq \OO[F]$, an integral or half-integral
weight $\mathbf k$ and a character $\chi$ of $C_F$ of conductor dividing $\id
N$, we denote by $\mathcal{M}_{\mathbf{k}}(\id N, \chi)$ and
$\mathcal{S}_{\mathbf{k}}(\id N, \chi)$ the corresponding spaces of Hilbert
modular and cuspidal forms.
When $\mbk$ is half-integral, given $f \in \mathcal{M}_{\mathbf{k}}(\id N,
\chi)$, for $\xi \in F^+ \cup \{0\}$ and $\id a \in \IF$ we denote by
$\lambda(\xi,\id a;f)$ its $\xi$-th Fourier coefficient at $\id a$.

\section{Quaternionic modular forms}\label{sect:quat_forms}

Let $B$ be a totally definite quaternion algebra over $F$. Let $(V, \rho)$ be 
an irreducible unitary right representation of $B^\times/F^\times$, which we 
denote by $(v,\gamma)\mapsto v\cdot \gamma$. Let $R$ be an order
in $B$. A \emph{quaternionic modular form} of weight 
$\rho$ and level $R$ is a function $\varphi: \hatx B\to V$ such that for every 
$x\in \hatx B$ the following transformation formula is satisfied:
\[
 \varphi(u x \gamma) = \varphi(x) \cdot \gamma \qquad
 \forall\,u\in\hatx R,\,\gamma\in B^\times\,.
\]
The space of all such functions is denoted by $\mathcal{M}_\rho(R)$. We let 
$\mathcal{E}_\rho(R)$ be the subspace of functions that factor through the map 
$\norm:\hatx B \to \hatx F$. These spaces come equipped with the action 
of Hecke operators $\Tm$, indexed by integral ideals $\id m \subseteq \OO$,
and given by
\[
\Tm\varphi(x) = \sum_{h \in \hatx R \backslash H_\id{m}} \varphi(h x) 
\,,
\]
where $H_\id{m} = \left\{h \in \widehat R \, : \, \hatOO
\norm(h) \cap \OO = \id m\right\}$.

Given $x \in \hatx B$, we let
\[
 \widehat R_x = x ^{-1} \widehat R \, x , \qquad R_x = B\cap \widehat R_x, 
 \qquad \Gamma_{\!x} = R_x^\times / \OOx,
 \qquad t_x = \num{ \Gamma_{\!x} }\,.
\]
The sets $\Gamma_{\!x}$ are finite since $B$ is totally definite. Let $\cl(R) = 
\hatx R \backslash \hatx B / B^\times$. We define an inner product on 
$\mathcal{M}_\rho(R)$, called the \emph{height pairing}, by
\[
 \ll{\varphi,\psi} = \sum_{x\in \cl(R)} \tfrac{1}{t_x}
 \ll{\varphi(x),\psi(x)}\,. 
\]
The space of \emph{cuspidal forms} $\mathcal{S}_\rho(R)$ is defined as the 
orthogonal complement of $\mathcal{E}_\rho(R)$ with respect to this pairing.

Let $N(\widehat R) = \{z \in \hatx B\,:\,\widehat R_z = \widehat R\}$ be the 
normalizer of $\widehat R$ in $\hatx B$.  We let $\widetilde \Bil(R) = \hatx 
R \backslash N(\widehat R) / F^\times$. We have an embedding $\cl(F) 
\hookrightarrow \widetilde \Bil(R)$. The group $\widetilde \Bil(R)$, and in 
particular $\cl(F)$, acts on $\mathcal{M}_\rho(R)$ by letting $(\varphi\cdot 
z)(x)=\varphi (zx)$. This action restricts to $\mathcal{S}_\rho(R)$ and is 
related to the height pairing by the equality 
\begin{equation}\notag
\ll{\varphi \cdot z,\psi \cdot z} = \ll{\varphi, \psi}\,.
\end{equation}

The subspaces of $\mathcal{M}_\rho(R)$ and $\mathcal{S}_\rho(R)$ fixed by the 
action of $\cl(F)$ are denoted by $\mathcal{M}_\rho(R,\bbu)$ and 
$\mathcal{S}_\rho(R,\bbu)$. Let $\Bil(R) = \hatx R \backslash N(\widehat R) / 
\hatx F$. Then $\Bil(R)$ acts on $\mathcal{M}_\rho(R,\bbu)$ and 
$\mathcal{S}_\rho(R,\bbu)$.
Given a character $\delta$ of $\Bil(R)$ we denote
\[
  \mathcal{M}_\rho(R,\bbu)^\delta =
  \left\{ \varphi \in \mathcal{M}_\rho(R,\bbu) \; : \;
  \varphi \cdot z = \delta(z) \, \varphi \quad \forall\, z \in \Bil(R)\right\}.
\]

\subsection*{Forms with minimal support}
Given $x\in \hatx B$ and $v\in V$, let $\varphi_{x,v} \in\mathcal{M} 
_\rho(R)$ be the quaternionic modular form given by
\[
 \varphi_{x,v}(y) = \sum_{\gamma\in \Gamma_{\!x,y}} v \cdot \gamma\,,
\]
where $\Gamma_{\!x, y} = (B^\times \cap x ^{-1} \hatx R y) / \OOx$. 
Note that $\varphi_{x,v}$ is supported in $\hatx R x B^\times$. Furthermore, we 
have that 
\begin{alignat}{2}
	\varphi_{ux\gamma, v} & = \varphi_{x,v\cdot\gamma^{-1}}
	&& \forall \, u\in\hatx R, \, \gamma \in B^\times,
	\label{eqn:transf_phi2}
	\\
	\varphi_{x,v} \cdot z & = \varphi_{z^{-1} x, v}
	&\qquad& \forall \, z \in \widetilde\Bil(R)\,.
	\label{eqn:transf_phi}
\end{alignat}
Given $\varphi \in \mathcal{M}_\rho(R)$, using that $\varphi(x) \in 
V^{\Gamma_{\!x}}$ for every $x \in \hatx B$ we get that
\begin{equation}\label{eqn:phi=sum_phix}
 \varphi = \sum_{x\in \cl(R)} \tfrac{1}{t_x} \varphi_{x,\varphi(x)}\,.
\end{equation}

The following two results are proved in \cite[Sect.~1]{waldspu}.

\begin{prop}\label{prop:hecke_on_phi}
 
Let $x\in \hatx B$ and $v\in V$. Then
 $\Tm\varphi_{x,v} = \sum_{h\in H_\id{m} / \hatx R}  \varphi_{h^{-1}x,v}$\,.

\end{prop}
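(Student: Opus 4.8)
The plan is to evaluate both sides at an arbitrary point $y \in \hatx B$ and to show that the two sums obtained become term-by-term identical once rewritten as sums over $\gamma \in B^\times / \OOx$. The only structural input needed is that $H_\id{m}$ is stable under both left and right multiplication by $\hatx R$: this is clear since $\hatx R \subseteq \widehat R$ and $\norm(\hatx R) \subseteq \hatOOx$, so multiplying an element $h \in H_\id{m}$ by a unit of $\widehat R$ keeps it inside $\widehat R$ and leaves the ideal $\hatOO\,\norm(h) \cap \OO$ unchanged. In particular $\hatx R \backslash H_\id{m}$ and $H_\id{m} / \hatx R$ are both sensible index sets, and because $\OOx \subseteq \hatOOx \subseteq \hatx R$ is central, the condition ``$x\gamma y^{-1} \in H_\id{m}$'' on $\gamma \in B^\times$ depends only on the class $\gamma\OOx$.

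First I would expand the left-hand side. Unwinding the definitions of $\Tm$ and of $\varphi_{x,v}$,
\[
 \Tm\varphi_{x,v}(y)
 \;=\; \sum_{h \in \hatx R \backslash H_\id{m}} \varphi_{x,v}(hy)
 \;=\; \sum_{h \in \hatx R \backslash H_\id{m}} \;
       \sum_{\gamma \in (B^\times \cap x^{-1} \hatx R\, hy)/\OOx} v \cdot \gamma .
\]
This is a finite sum, since $\Tm$ preserves $\mathcal{M}_\rho(R)$ and $\varphi_{x,v} \in \mathcal{M}_\rho(R)$, so I may exchange the order of summation. For a fixed class $\gamma \in B^\times/\OOx$ the element $x\gamma y^{-1}$ lies in a unique left $\hatx R$-coset, and that coset appears in the outer index set precisely when $x\gamma y^{-1} \in H_\id{m}$ (using that $H_\id{m}$ is a union of left $\hatx R$-cosets). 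Hence
\[
 \Tm\varphi_{x,v}(y)
 \;=\; \sum_{\substack{\gamma \in B^\times/\OOx \\ x\gamma y^{-1} \in H_\id{m}}} v \cdot \gamma .
\]
Next I would treat the right-hand side symmetrically: after using \eqref{eqn:transf_phi2} to see that $\varphi_{h^{-1}x,v}$ depends only on the coset $h\hatx R$, I expand $\sum_{h \in H_\id{m}/\hatx R} \varphi_{h^{-1}x,v}(y)$ in the same fashion, now exploiting that $H_\id{m}$ is a union of \emph{right} $\hatx R$-cosets, and arrive at exactly the same sum $\sum_{\gamma \in B^\times/\OOx,\; x\gamma y^{-1}\in H_\id{m}} v\cdot\gamma$. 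Comparing the two identities gives the proposition.

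There is no genuine obstacle here — the argument is bookkeeping with double cosets — and the only points that deserve care are the well-definedness of the individual summands under changes of coset representatives (for $h$, and for $\gamma$ modulo $\OOx$), which follow from the centrality of $\OOx$ together with \eqref{eqn:transf_phi2} and the two-sided $\hatx R$-stability of $H_\id{m}$ noted above; convergence of every sum involved is automatic because $\Tm$ maps $\mathcal{M}_\rho(R)$ to itself.
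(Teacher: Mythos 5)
Your proof is correct: evaluating both sides at an arbitrary $y\in\hatx B$ and rewriting each double sum as $\sum v\cdot\gamma$ over the classes $\gamma\in B^\times/\OOx$ with $x\gamma y^{-1}\in H_{\id m}$, using that $H_{\id m}$ is a finite union of both left and right $\hatx R$-cosets, is exactly the standard unfolding argument. The paper itself does not reprove this statement but defers to \cite[Sect.~1]{waldspu}, where the proof is the same coset bookkeeping, so your approach matches.
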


\begin{prop}\label{prop:height_on_phi}
Given $x,y\in\hatx B$ and $v,w\in V$, we have
\[
 \ll{\varphi_{x,v}\,, \varphi_{y,w}} 
 = \sum_{\gamma\in\Gamma_{\!x,y}} \ll{v\cdot\gamma, w}.
\]
\end{prop}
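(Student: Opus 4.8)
The plan is to unfold both sides of the claimed identity and match them through a counting argument for the fibres of a multiplication map. First I would dispose of the case in which $x$ and $y$ represent different classes in $\cl(R)$: then $\varphi_{x,v}$ and $\varphi_{y,w}$ are supported on the disjoint sets $\hatx R x B^\times$ and $\hatx R y B^\times$, so the height pairing is $0$; and $\Gamma_{\!x,y} = (B^\times\cap x^{-1}\hatx R y)/\OOx$ is empty, since a nonzero element would force $x\in\hatx R y B^\times$, so the right-hand side is $0$ as well. Thus I may assume $x$ and $y$ lie in the same class, and by \eqref{eqn:transf_phi2} I may further assume that $x$ is the representative of that class appearing in $\cl(R)$.

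Under this reduction only the $z=x$ summand of the height pairing is nonzero, so
\[
\ll{\varphi_{x,v},\varphi_{y,w}} = \tfrac{1}{t_x}\,\ll{\varphi_{x,v}(x),\varphi_{y,w}(x)}.
\]
Since $\Gamma_{\!x,x}=R_x^\times/\OOx=\Gamma_{\!x}$, we have $\varphi_{x,v}(x)=\sum_{\gamma\in\Gamma_{\!x}}v\cdot\gamma$, while $\varphi_{y,w}(x)=\sum_{\delta\in\Gamma_{\!y,x}}w\cdot\delta$. Expanding the pairing bilinearly and invoking the unitarity of $\rho$ in the form $\ll{v\cdot\gamma,w\cdot\delta}=\ll{v\cdot\gamma\delta^{-1},w}$ turns the right-hand side into
\[
\tfrac{1}{t_x}\sum_{\gamma\in\Gamma_{\!x}}\ \sum_{\delta\in\Gamma_{\!y,x}}\ll{v\cdot\gamma\delta^{-1},w}.
\]

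The crux is then to prove that the map $(\gamma,\delta)\mapsto\gamma\delta^{-1}$ from $\Gamma_{\!x}\times\Gamma_{\!y,x}$ to $\Gamma_{\!x,y}$ is surjective with every fibre of cardinality exactly $t_x$; feeding this into the last display cancels the factor $1/t_x$ and produces $\sum_{\eta\in\Gamma_{\!x,y}}\ll{v\cdot\eta,w}$. To establish this I would first note that right translation by $\Gamma_{\!x}$ makes $\Gamma_{\!y,x}$ a torsor: the action is free because $\OOx\subseteq F^\times$ is central and contained in $\hatx R$, and transitive because the quotient of two elements of $B^\times\cap y^{-1}\hatx R x$ lies in $B^\times\cap x^{-1}\hatx R x=R_x^\times$. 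Then, given $\eta\in\Gamma_{\!x,y}$ and a base point $\delta_0\in\Gamma_{\!y,x}$, one checks (using $\hatx R\,\hatx R=\hatx R$) that $\eta\delta_0\in\Gamma_{\!x}$, and that the preimages of $\eta$ are exactly the pairs $(\eta\delta_0 g,\delta_0 g)$ with $g\in\Gamma_{\!x}$, giving $t_x$ of them.

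I expect the only real obstacle to be the bookkeeping in this last step: keeping track of left versus right cosets, checking that the relevant products stay inside the claimed sets — which relies on $\hatx R$ being a group and on the centrality of $\OOx$ so that the maps descend correctly modulo $\OOx$ — and verifying freeness and transitivity of the torsor action. Everything else is a formal unwinding of the definitions of $\varphi_{x,v}$ and of the height pairing.
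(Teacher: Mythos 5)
Your argument is correct, and it is essentially the direct computation one expects here; note that the paper itself gives no proof of this proposition, deferring to \cite[Sect.~1]{waldspu}, so the only comparison available is with that standard argument (which can be organized more slickly by first showing $\ll{\varphi_{x,v},\psi}=\ll{v,\psi(x)}$ for arbitrary $\psi\in\mathcal{M}_\rho(R)$ and then applying it to $\psi=\varphi_{y,w}$, using that inversion gives a bijection $\Gamma_{\!y,x}\to\Gamma_{\!x,y}$ — this avoids the $t_x^2$-to-$t_x$ fibre count altogether). The one step you pass over silently is that your reduction ``assume $x$ is the chosen representative'' requires checking that the right-hand side is unchanged when $(x,v)$ is replaced by $(x_0,v\cdot\gamma_0^{-1})$ with $x=ux_0\gamma_0$; this is immediate from $\Gamma_{\!x,y}=\gamma_0^{-1}\Gamma_{\!x_0,y}$ and $v\cdot(\gamma_0^{-1}\delta)=(v\cdot\gamma_0^{-1})\cdot\delta$, but it should be said, since the statement being proved is precisely an identity between the two sides. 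The remaining ingredients — disjointness of supports in the unequal-class case, $\Gamma_{\!x,x}=\Gamma_{\!x}$, unitarity giving $\ll{v\cdot\gamma,w\cdot\delta}=\ll{v\cdot\gamma\delta^{-1},w}$, and the torsor structure of $\Gamma_{\!y,x}$ under $\Gamma_{\!x}$ (freeness here is just cancellation in $B^\times$, not really centrality of $\OOx$, which only ensures the action descends mod $\OOx$) — are all verified correctly, so the fibre count of $t_x$ and the cancellation of $1/t_x$ go through as you claim.
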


\section{Weight functions and half-integral weight modular forms}\label{sect:half_int}

We fix $l \in F^\times$ and $\id N \subseteq \OO$. We assume that $\cond l$ is
prime to $2 \id{d} \id N$.
In particular, $\cond l$ is square-free.
We let $\id b \in \IF$ be such that $(l,\id b) $ is a fundamental discriminant.
Such $\id b$ exists and is unique: see \cite[Proposition 2.11]{waldspu}.
Furthermore, assuming that it exists, we fix an auxiliary $u \in \OO^\times \cap
F\ssl$.
Note that $\cond l$ is prime to $\cond u$, since $\cond u \mid 4$.

Let $W = B/F$, in which we consider the
totally negative definite ternary quadratic form
$\Delta(x)=\trace(x)^2-4\norm(x)$.
Let $L \subseteq W$ be an integral lattice of level $\id N$
(i.e., $\Delta(L)\subseteq \OO$ and $\Delta(L^\sharp) \subseteq \id{N}^{-1}$).
For each prime $\id p \mid \cond l$, we let $w\p$ be a (local) \emph{weight function}
on $L\p$.
This is a function $w\p : L\p / \pi\p L\p \to \{0,1,-1\}$ such that the 
following conditions hold for every $x \in L\p$:
\begin{align}
	w\p(x) & = 0 \quad \text{if } \Delta(x) \notin \pi\p \OO[\id p].
	\label{eqn:wnormal}\\
	w\p (\xi x) & = \chic[\pv]{l}(\xi) \, w\p(x)
	\quad \text{for every }\xi \in \OO[\pv]^\times.
	\label{eqn:whomog}\\
	w\p(y^{-1} x y) & = \chic[\pv]{l}(\norm y) \, w\p(x) \quad \text{for every }y \in
	B\p^\times \text{ such that }y^{-1} L\p y = L\p.
	\label{eqn:wtransp}
\end{align}
In \cite[Sect.~4]{tornatesis} it is shown that nonzero weight functions do
exist, and that they also satisfy:
\begin{align}
	w\p(x) & \neq 0 \quad \textrm{if } x \notin \pi\p L\p \textrm{ and }
	\Delta(x) \in \pi\p \OO[\pv].\label{eqn:wvector}
\end{align}
We let $w : \What \to \{0,1,-1 \}$ be the (adelic) weight function on
$\widehat L$ given by
\begin{equation*}
	w(x) = \begin{cases}
		\prod_{\pv \mid \cond l} w\p(x\p), & \text{if } x \in \widehat L, \\
		0, & \text{otherwise}.
	\end{cases}
\end{equation*}
Note that when $l \in \Fc$ the function $w$ is simply the characteristic
function of $\widehat{L}$. In general, $w$ is supported in $\widehat
L$ and it is $\cond l\widehat L$-periodic.

Finally, given $\id a \in \IF$ we let $w(\vardot;\id a)$
denote the weight function on $\id{a}^{-1}\widehat L$
given by $w(x;\id a) = \chic{l}(\xi)\,w(\xi x)$,
where $\xi \in \hatx F$ is such that $\xi \hatOO \cap F = \id a$.
Note that by \eqref{eqn:whomog} if $\id m \subseteq \OO$ is prime to $\cond l$
we have that
\begin{equation}
	w(\vardot; \id m \id a) \vert_{\id{a}^{-1}\widehat L}
	= \chic[*]{l}(\id m)\,w(\vardot; \id a).
	\label{eqn:wsegda}
\end{equation}

\medskip

Denote by $\HH$ the complex upper half-plane.
We consider the exponential function given by
\[
	e_F : F\times \HH^\mathbf{a} \to \C, \quad (\xi, z) \mapsto 
	\exp{\left(2\pi i \Big(
				\sum_{\tau\in \mba} \xi_\tau\, z_\tau 
	\Big)\right)}.
\]
Given a homogeneous polynomial $P$ in $W$ of
degree $\mathbf{k}$, harmonic with respect to the quadratic form $\Delta$,
we let $\tlu:\HH^\mathbf{a}\to \C$ be the function given by
\begin{equation}\label{eqn:theta}
	\tlu(z) = \tlu(z;w,P)
	= \sum_{y\in \id b^{-1}L} w(y; \id b) \, P(y)
	\, e_F(-u\disc(y) /l,z/2)\,.
\end{equation}
Note that $\tlu$ is trivially zero if $\sgn(l) \neq (-1)^\mbk$, 
since in that case $w(\vardot; \id b) \, P(\vardot)$ is an odd function.
When $l = u = 1$ we recover the theta series considered in \cite{waldspu}.

Furthermore, for $D \in F^{-l} \cup \{0\}$ and $\id a \in \IF$ we let
\begin{equation}\label{eqn:fourier_coeffs}
	c(D, \id a; w,P) = \frac1{\idn a}
	\sum_{y \in \mathcal{A}_{\Delta,\id c}(L)} w(y;\id c) \, P(y) \,,
\end{equation}
where $\Delta = lD$ and $\id c = \id{ab}$.
Here, for $\Delta \in F^- \cup \{0\}$ and $\id c \in \IF$ we denote
\[
 \mathcal{A}_{\Delta,\id c}(L) = \{y \in \id c^{-1}L\,:\, \disc(y) = \Delta\}.
\]
Note that \eqref{eqn:fourier_coeffs} does not depend on $u$.

\begin{prop}\label{prop:modularity}
With the notation as above, 
\begin{enumerate}
\item $\tlu \in \Mhk$.
\item For $D\in F^{-l} \cup\{0\}$ and $\id a \in \IF$, we have that
	\begin{equation}\label{eqn:cfourier}
		\lambda(-uD,\id a; \tlu) = c(D, \id a;w,P).
	\end{equation}
\item$\tlu$ is a cusp form, provided $l\notin \Fc$ or
	$\mbk \neq \mathbf{0}$.
\end{enumerate}

\end{prop}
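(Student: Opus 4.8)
The plan is to treat the three assertions in order, reducing each to the classical theta-series machinery twisted by the weight function $w$. For (a), I would first observe that $\tlu$ is, up to the weight factor $w(y;\id b)$ and the harmonic polynomial $P$, a Siegel theta series attached to the totally negative definite ternary form $\Delta$ restricted to $\id b^{-1}L$. The modular transformation behaviour under $\Gamma_0(4\id N)$ is governed by Shimura's theory of theta series with harmonic polynomials (degree $\mathbf k$ giving weight $\mathbf{3/2+k}$), and the level $4\id N$ comes from the hypothesis that $L$ has level $\id N$ together with the factor $4$ introduced by $u\disc(y)/l$ (recall $\cond u\mid 4$). The genus-character $\chic{-u}$ appears as the nebentypus: the translation behaviour of the Gauss sum of $\Delta$ on the dual lattice picks up $\chic{-u}$ once one accounts for the factor $-u/l$ in the exponential. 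The transformation properties \eqref{eqn:whomog} and \eqref{eqn:wtransp} of the local weight functions are exactly what is needed to make the adelic sum well defined (independent of the choice of $\xi$ defining $w(\vardot;\id b)$) and to kill the would-be contributions that would spoil the plus-space condition. I expect this part to follow from \cite{waldspu} together with \cite{tornatesis}: in the case $l=u=1$ this is precisely the statement proved in \cite{waldspu}, and the general case differs only by the insertion of $w$ and the twist by $u/l$, whose effect on the transformation formulae is local and multiplicative.

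For (b), the Fourier expansion is obtained by collecting, for each value $\Delta\in F^{-l}\cup\{0\}$, the terms $y\in\id b^{-1}L$ with $-u\disc(y)/l=-uD$, i.e.\ with $\disc(y)=\Delta=lD$. Writing $\id a\in\IF$ and absorbing it into the lattice via the scaling $\xi\in\hatx F$ with $\xi\hatOO\cap F=\id a$, the set of such $y$ becomes $\mathcal{A}_{\Delta,\id c}(L)$ with $\id c=\id{ab}$, and the weight factor transforms as $w(y;\id b)\mapsto w(y;\id c)$ by the definition of $w(\vardot;\id a)$ and \eqref{eqn:wsegda}. The normalisation $1/\idn a$ is the usual Jacobian factor coming from the change of lattice. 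This is a bookkeeping computation; the only subtlety is checking that the scaling $\xi$ and the genus-character twist interact correctly, which again is controlled by \eqref{eqn:whomog}.

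For (c), I would argue that the constant term (the $\Delta=0$, i.e.\ $D=0$ contribution, namely $y$ with $\disc(y)=0$) is the only obstruction to cuspidality, and that it vanishes under the stated hypothesis. If $\mbk\neq\mathbf 0$ then $P$ is a nonconstant harmonic polynomial and $P(0)=0$, while the $\disc(y)=0$ locus in $W=B/F$ consists of (classes of) elements with $\trace(x)^2=4\norm(x)$; homogeneity of $P$ of positive degree forces the relevant sum to vanish. If instead $l\notin\Fc$, then by the remark following \eqref{eqn:theta} the function $w(\vardot;\id b)\,P(\vardot)$ is odd unless $\sgn(l)=(-1)^\mbk$, and more to the point the weight function $w\p$ is genuinely nontrivial at the primes dividing $\cond l$; using \eqref{eqn:wnormal} one sees that $w(y;\id b)=0$ whenever $\disc(y)=0$, because then $\Delta(y)\notin\pi\p\OO[\id p]$ fails to be satisfied in the required way — more precisely, the local condition \eqref{eqn:wnormal} forces the vanishing at the degenerate locus. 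Hence the constant term is zero and $\tlu\in\Shk$.

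The main obstacle I anticipate is part (a): verifying the precise level $4\id N$ and the precise nebentypus $\chic{-u}$ requires tracking local Gauss sums and the Weil representation data at the primes dividing $2$, $\id d$, $\id N$, and $\cond l$ simultaneously. The primes dividing $\cond l$ are the genuinely new feature compared with \cite{waldspu}, and one must check that inserting the weight function $w\p$ — which is $\cond l\widehat L$-periodic rather than $\widehat L$-periodic — does not increase the level beyond $4\id N$ and contributes only the expected character. This is exactly where properties \eqref{eqn:wnormal}–\eqref{eqn:wvector} of \cite{tornatesis} do the work, so I would organise the proof of (a) as a local computation prime by prime, citing \cite{waldspu} for the primes away from $\cond l$ and \cite{tornatesis} for the primes dividing $\cond l$, and then assembling the global statement.
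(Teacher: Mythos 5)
Your sketch of (b) is broadly consistent with what is needed, but there are two genuine gaps in (a) and (c). For (a), the crux is precisely the point you defer to ``local Gauss sums and properties \eqref{eqn:wnormal}--\eqref{eqn:wvector}'': since $\eta=w(\vardot;\id b)$ is only $\cond{l}\widehat L$-periodic, any naive local computation at a prime $\pv\mid\cond l$ would produce a level divisible by $\cond l$, and the whole point of the proposition (the level $4\id N$ is independent of $l$) would be lost. What actually rescues the level is a support statement for the Fourier transform: if ${^{\iota}\eta}(x)\neq 0$ then $\Delta(x)/l\in\id d^{-2}\id N^{-1}\hatOO$, i.e.\ one still gets divisibility by every $\pv\mid\cond l$ (part (d) of Lemma~\ref{lem:weil_casos}). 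This is \emph{not} obtained from \eqref{eqn:wnormal}--\eqref{eqn:whomog} or a Gauss-sum count, but from the conjugation property \eqref{eqn:wtransp}: one produces (via \cite[Lemma 2.5]{tornatesis}, valid because $\pv\nmid 2\id d\id N$ and $L\p=L\p^\sharp$) an element $z\p$ normalizing $L\p$ and fixing the given vector with $\chic[\pv]{l}(\norm z\p)=-1$, which forces ${^{\iota}\eta}(x)=-{^{\iota}\eta}(x)$. Your proposal never formulates this step, and without it the prime-by-prime assembly you describe does not yield level $4\id N$; the remaining transformation law (the analogue of Proposition~\ref{prop:eta_modularity}, proved on generators of $\Gamma[4\id N]$ together with Proposition~\ref{prop:shimU}) then gives the character $\chic{-u}$.

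For (c) the argument you give is both incomplete and, in the case $l\notin\Fc$, based on a wrong mechanism. Since $\Delta$ is totally definite, the only $y$ with $\disc(y)=0$ is $y=0$, and there $\Delta(y)=0\in\pi\p\OO[\pv]$, so \eqref{eqn:wnormal} imposes no vanishing; what gives $w(0)=0$ is the homogeneity \eqref{eqn:whomog} together with the nontriviality of $\chic[\pv]{l}$ on $\OOx[\pv]$. More importantly, killing the constant term of the given expansion only proves vanishing at the cusp $\infty$; cuspidality requires the constant term at \emph{every} cusp $\beta^{-1}\infty$, $\beta\in\SL_2(F)$, which is $P(0)\,{^{\beta}\eta}(0)$. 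For $\mbk\neq\mathbf 0$ this vanishes because $P(0)=0$, but for $\mbk=\mathbf 0$, $l\notin\Fc$ one must show ${^{\beta}\eta}(0)=0$ for all $\beta$; the paper does this (Lemma~\ref{lem:cusp_eta}) by factoring $\beta$ through $\iota$ and upper-triangular matrices and applying, at a single prime $\pv\mid\cond l$, a local vanishing lemma to $\eta\p$ or ${^{\iota}\eta}\p$ using their $\chic[\pv]{l}$-homogeneity (parts (a)--(d) of Lemma~\ref{lem:weil_casos} again). Your sketch contains no step addressing these other cusps, so as written part (c) is not established.
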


The remarkable fact about this modularity result, which was proved in the case 
$F = \Q$, $l$ a positive prime and $u = 1$ in \cite{tornatesis}, is that the
level of $\tlu$ does not depend on $l$.
To prove it we will follow closely \cite[Sect.~11]{shim-hh}, to which we
refer for the details we omit. We start by setting some notation.

\medskip

We consider the exponential functions defined by
\begin{align*}
	e\p & : F\p \to \C, & x\p & \mapsto
	\exp\left(-2\pi i (\Tr_{F\p/\Q_p}(x\p))\right), & \pv \mid p, \\
	e_\mbf & : \widehat F \to \C, & x & \mapsto \prod_{\pv \in \mbf} e\p(x\p).
\end{align*}
Note that given $\xi \in F$, then $e_\mbf(\xi \zeta) = 1$ for every $\zeta \in
\OO$ if and only if $\xi \in \id d^{-1}$.

Given $\id b, \id c \in \IF$ such that $\id{bc} \subseteq \OO$, we denote
\begin{align*}
	\Gamma\p [\id b, \id c] & = \left\{ \beta \in \SL_2(F\p) :
		a_\beta,d_\beta \in \OO[\pv], b_\beta \in \id b\p, c_\beta \in \id c\p
    \right\}, \\
  \widehat \Gamma[\id b, \id c] & = \SL_2(\widehat F) \cap \prod_{\pv \in
  \mathbf{f}} \Gamma\p[\id b, \id c], \\
  \Gamma[\id b, \id c] & = \SL_2(F) \cap \widehat \Gamma[\id b, \id c].
\end{align*}
Furthermore, given $\id a \subseteq \OO$  we denote
$\Gamma[\id a] = \Gamma[2 \id{d}^{-1},2^{-1} \id{d} \cdot \id a]$.
For $\beta \in \SL_2(\widehat F)$ denote by $\id{a}_\beta$ the fractional 
ideal which is locally given by $c_{\beta\p} \id{d}\p^{-1} + d_{\beta\p} 
\OO[\pv]$. Note that $\id{a}_\beta = \OO$ whenever $\beta \in 
\widehat \Gamma[\id{d}^{-1},\id{d}]$.

\medskip

Fix a basis of $W$, through which we identify $W$ with $F^3$. 
Let $S \in \GL_3(F)$ be the matrix of the quadratic form $-u\disc/l$ with respect
to this basis.
Since $u \in F^l$ we have that $S$ is totally positive definite. 
Furthermore, $\det(S) = lu \in F^\times / \Fc$. 
These facts are used in Proposition \ref{prop:shimU} and
\eqref{eqn:shim_automorph} below.

There is a left action, depending on $S$, of (a certain subgroup of) $\SL_2(F)$
on the Schwartz-Bruhat space $\mathcal S(\widehat{F}^3)$, which we denote by
$(\beta, \eta) \mapsto {^{\beta}\eta}$. 
Shimura proves the following results regarding this action.

\begin{prop}

Let $\eta \in \mathcal S(\widehat{F}^3)$.

\begin{enumerate}
\item Let $\beta = \left(\begin{smallmatrix} 1 & b \\ 0 & 1 
\end{smallmatrix} \right)$, with $b \in F$. Then
\begin{equation}\label{eqn:weil_parab}
	{^{\beta}{\eta}(x)}= e_{\mathbf f} (x^t S x \, b/2) \,\eta(x).
\end{equation}
\item Let $\iota = \left( \begin{smallmatrix} 0 & -1 \\ 1 & 0\end{smallmatrix} 
\right)$. Then
\begin{equation}\label{eqn:weil_fourier}
	{^{\iota}\eta}(x) = c_S \int_{\widehat{F}^3} \eta(y) 
	e_{\mathbf f} (-x^t S y) \, dy,
\end{equation}
where $c_S$ is a nonzero constant depending on $S$.
\end{enumerate}

\end{prop}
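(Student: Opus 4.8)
The plan is to recognize the action $(\beta,\eta)\mapsto{}^\beta\eta$ as (a twist of) the Weil representation of the metaplectic group attached to the quadratic space $(F^3, -u\disc/l)$, or rather its adelic finite part, and then to compute the two generators explicitly. Since the statement is attributed to Shimura and we are told we may follow \cite[Sect.~11]{shim-hh}, I would not reconstruct the representation from scratch; instead I would recall the defining cocycle relations and verify that the claimed formulas for the upper-triangular unipotent $\beta=\smat{1&b\\0&1}$ and for $\iota=\smat{0&-1\\1&0}$ are the standard ones. First I would fix the additive character: the relevant one here is $e_\mbf$ on $\widehat F$, built out of the local characters $e\p(x\p)=\exp(-2\pi i\,\Tr_{F\p/\Q_p}(x\p))$, which has conductor $\id d^{-1}$ (this is exactly the remark ``$e_\mbf(\xi\zeta)=1$ for all $\zeta\in\OO$ iff $\xi\in\id d^{-1}$''). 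With this character fixed, the Weil representation of $\SL_2(\widehat F)$ (or its metaplectic cover) on $\mathcal S(\widehat F^3)$ attached to the quadratic form with Gram matrix $S$ acts on the standard generators by the two formulas in \eqref{eqn:weil_parab} and \eqref{eqn:weil_fourier}: the unipotent acts by multiplication by $e_\mbf\bigl(\tfrac12\, x^tSx\, b\bigr)$, and $\iota$ acts by the Fourier transform $\eta\mapsto c_S\int_{\widehat F^3}\eta(y)\,e_\mbf(-x^tSy)\,dy$, where $c_S$ is the Weil index / normalization constant, a nonzero scalar depending only on $S$ (hence only on $l$ and $u$, via $\det S=lu$).

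The key steps, in order: (1) Recall the precise definition of the action ${}^\beta\eta$ as used in \cite{shim-hh}, including the (partial) domain of definition on $\SL_2(F)$ — it is genuinely defined on $\SL_2(F)$ rather than a cover because $\det S=lu\in F^\times$ and $S$ is totally positive definite, the two facts emphasized in the paragraph preceding the Proposition; these facts kill the would-be metaplectic obstruction, so the cocycle is trivial here. (2) For part (a), compute directly from the definition: the unipotent $\smat{1&b\\0&1}$ acts on $\mathcal S(\widehat F^3)$ by pointwise multiplication by the Gaussian character $x\mapsto e_\mbf(\tfrac12 x^tSx\,b)$; this is essentially immediate from the formula for $r(n(b))$ in the Weil representation and requires only unwinding $x^tSx$ as the value $-u\disc(x)/l$ of the quadratic form. (3) For part (b), identify the action of $\iota$ with the Fourier transform with respect to the pairing $x^tSy$ and the character $e_\mbf$: again this is the standard ``$r(w)=$ Fourier transform'' formula, with the constant $c_S$ being whatever normalization Shimura uses — I would simply record that it is nonzero and depends on $S$, without computing it. (4) Finally, note that these two elements (together with the diagonal torus, which I do not need to state here) generate $\SL_2$, so these two formulas pin down the action; this is why the Proposition lists exactly these two cases.

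The main obstacle — and it is mild — is bookkeeping about normalizations: the sign and the factor of $2$ in $e_\mbf(\tfrac12 x^tSx\,b)$ versus $e_\mbf(x^tSx\,b)$, the minus sign in $e_\mbf(-x^tSy)$ inside the Fourier transform, and the precise convention for which additive character is used ($e\p$ has a $-2\pi i$ in the exponent, not $+2\pi i$, so one must track that sign carefully to match the later application, e.g.\ in \eqref{eqn:shim_automorph} and Proposition~\ref{prop:shimU}). I would resolve this by fixing conventions once — additive character $e_\mbf$ of conductor $\id d^{-1}$, Gram matrix $S$ of $-u\disc/l$ — and then quoting \cite[Sect.~11]{shim-hh} for the statement that with these conventions the action on the two generators is exactly \eqref{eqn:weil_parab} and \eqref{eqn:weil_fourier}. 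Since the paper explicitly says ``Shimura proves the following results'' and ``we refer for the details we omit'', the expected ``proof'' here is really a pointer to \cite{shim-hh} together with the verification that the hypotheses there (positive-definiteness of $S$, $\det S\in F^\times$) are met — and those are precisely the two facts established in the sentence beginning ``Since $u\in F^l$\dots''.
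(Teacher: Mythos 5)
Your proposal matches the paper's treatment: the paper offers no proof of this Proposition at all, simply attributing both formulas to Shimura \cite[Sect.~11]{shim-hh}, which is exactly the citation-plus-convention-checking argument you give (fixing the additive character $e_\mbf$ and the Gram matrix $S$ of $-u\disc/l$, and quoting the standard action on the two generators). One small caveat: the fact that the action is defined on (a certain subgroup of) $\SL_2(F)$ is part of Shimura's construction itself --- for a ternary form the metaplectic cocycle is not trivialized by $S$ being totally positive definite or $\det S = lu \in F^\times$; the paper invokes those two facts only later, in Proposition~\ref{prop:shimU} and \eqref{eqn:shim_automorph} --- but this does not affect your argument, since the existence of the action is presupposed in the statement.
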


\begin{prop}\label{prop:shimU}
Let $\eta\in\mathcal{S}(\widehat{F}^3)$. There is an open subgroup $U$ of 
$\widehat \Gamma[2\id{d}^{-1},2 \id{d} \cdot \cond{lu}]$  such that if 
$\beta\in \SL_2(F)\cap\left(\begin{smallmatrix} \xi & 0\\ 0 & 
\xi^{-1}\end{smallmatrix}\right) U$ with $\xi\in \hatx F$, then
\[
{^{\beta}\eta}(x) = \chic[\mathbf{a}]{lu} (d_\beta) \chic[*]{lu}(d_\beta 
\id{a}_\beta^{-1}) \norm(\id{a}_\beta)^{3/2} \eta(\xi x) \qquad \forall\,x\in 
\widehat{F}^3.
\]
\end{prop}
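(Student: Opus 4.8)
The plan is to follow Shimura's argument from \cite[Sect.~11]{shim-hh}, adapting it to the Hilbert modular setting with the quadratic form $-u\disc/l$. The starting point is the explicit Weil representation formulas \eqref{eqn:weil_parab} and \eqref{eqn:weil_fourier} for the generators $\smat{1 & b \\ 0 & 1}$ and $\iota = \smat{0 & -1 \\ 1 & 0}$ of $\SL_2$. First I would treat the diagonal torus: a direct change of variables in the Schwartz--Bruhat space shows that for $\beta = \smat{\xi & 0 \\ 0 & \xi^{-1}}$ with $\xi \in \hatx F$ one has ${^{\beta}\eta}(x) = c\,\eta(\xi^{-1} x)$ up to an explicit constant involving $\norm(\xi)$ and the quadratic characters attached to $\det(S) = lu$; the character $\chic{lu}$ enters precisely because $\det(S)$ is only well-defined modulo squares, so rescaling by $\xi$ produces a Weil-index discrepancy measured by $\chic{lu}(\xi)$. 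Combined with the Iwasawa/Bruhat decomposition this already handles the case $\id a_\beta = \OO$.

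The second step is to choose the open subgroup $U$. Since the action factors through a finite quotient once one fixes the lattice supporting $\eta$ — concretely, $\eta$ is invariant under a small enough open subgroup of $\widehat{F}^3$ and transforms by a finite-order character under a small enough open compact subgroup of $\SL_2(\widehat F)$ — one can take $U$ to be the intersection of $\widehat\Gamma[2\id d^{-1}, 2\id d\cdot\cond{lu}]$ with the stabilizer of $\eta$ under the Weil action. The congruence conditions defining $\widehat\Gamma[2\id d^{-1}, 2\id d\cdot\cond{lu}]$ are exactly what is needed to kill the $e_\mbf$-twist coming from \eqref{eqn:weil_parab}: the factor $2\id d^{-1}$ controls the upper-triangular entry against the pairing $x^t S x$, while $2\id d\cdot\cond{lu}$ controls the lower-triangular entry through the reciprocity $\iota \smat{1 & b \\ 0 & 1}\iota^{-1}$ computation, using that $\cond{lu} = \cond{l}\cond{u}$ by the coprimality of conductors noted earlier.

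The third and technically heaviest step is to patch these two cases together for a general $\beta \in \SL_2(F) \cap \smat{\xi & 0 \\ 0 & \xi^{-1}} U$. Here one writes $\beta$ locally, at each prime $\pv$, in a Bruhat decomposition adapted to $\id a_\beta$, i.e.\ as a product of a torus element, a unipotent, $\iota$, and an element of $U\p$, tracking the accumulated scalar. The ideal $\id a_\beta$, defined as the local $c_{\beta\p}\id d\p^{-1} + d_{\beta\p}\OO[\pv]$, is precisely the "denominator" that appears when one Fourier-transforms and rescales; its norm contributes the $\norm(\id a_\beta)^{3/2}$ (the exponent $3/2$ being half the rank $3$ of the quadratic form), and the quadratic character $\chic[*]{lu}(d_\beta\id a_\beta^{-1})$ records how $\det(S) = lu$ interacts with that rescaling. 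The archimedean character $\chic[\mathbf a]{lu}(d_\beta)$ arises from the compatibility of the sign of $d_\beta$ across the real places, forced by $\beta \in \SL_2(F)$ being global. The main obstacle is the bookkeeping of Weil constants and quadratic characters at the finitely many bad primes (those dividing $2\id d\cond{lu}$), where the local Weil index is not simply $1$; one must verify that all these local contributions assemble into exactly the global characters $\chic[\mathbf a]{lu}$ and $\chic[*]{lu}$, with no leftover root-of-unity ambiguity. This is where I would lean most directly on Shimura's computations, checking only that the replacement of his quadratic form by $-u\disc/l$ (with $\det = lu$, totally positive definite since $u \in F^l$) changes nothing essential beyond relabeling the relevant character as $\chic{lu}$.
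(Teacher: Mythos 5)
The paper does not actually prove this proposition: it is quoted from Shimura \cite{shim-hh}, Sect.~11 (the sentence ``Shimura proves the following results regarding this action'' covers it), the only input specific to this paper being the observation, made just before the statement, that $S=-u\disc/l$ is totally positive definite with $\det S=lu$ modulo squares. Your proposal is in substance a reconstruction of the outline of Shimura's argument, and, like the paper, it defers the decisive part --- verifying that the local Weil indices at the primes dividing $2\id{d}\cond{lu}$ assemble into exactly $\chic[\mathbf{a}]{lu}(d_\beta)\,\chic[*]{lu}(d_\beta\id{a}_\beta^{-1})$ with no residual root of unity, and producing the specific open subgroup $U\subseteq\widehat\Gamma[2\id{d}^{-1},2\id{d}\cdot\cond{lu}]$ --- to Shimura's computations; since that is precisely what the paper itself does, your treatment is consistent with the paper's, though as a standalone proof it remains an outline rather than an argument. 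One caution: the action $(\beta,\eta)\mapsto{^{\beta}\eta}$ is defined only for $\beta$ in (a certain subgroup of) $\SL_2(F)$, so your first step --- computing the action of the adelic diagonal element $\smat{\xi & 0\\ 0 & \xi^{-1}}$ with $\xi\in\hatx F$ by a change of variables --- is not literally available in this framework; the idele $\xi$ enters the statement only through the comparison with $\eta(\xi x)$, and making that step rigorous would require passing to the adelic (metaplectic) Weil representation, a standard but genuinely different setup from Shimura's, in which one would then have to recheck the product formula for the local Weil constants on global elements.
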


\bigskip

The function $\eta \in \mathcal S(\widehat{F}^3)$ we consider from now on is
$\eta(x) = w(x;\id b)$,
where we identify $\widehat{F}^3$ with $\What$.
Note that $\eta$ is supported in $\id b^{-1}\widehat{L}$,
and it is $l\id b\widehat{L}$-periodic.

\begin{lemma}\label{lem:weil_casos}
Let $x \in \widehat{F}^3$.
\begin{enumerate}
\item $\eta(\xi x) = \chic[\cond l]{l}(\xi) \, \eta(x)$ for every $\xi \in \hatOOx$.
\item If $\eta(x) \neq 0$, then $\Delta(x) / l \in \hatOO$.
\item ${^{\iota}\eta}(\xi x) = \chic[\cond l]{l}(\xi) \, {^{\iota}\eta}(x)$
	for every $\xi \in \hatOOx$.
\item If ${^{\iota}\eta}(x) \neq 0$, then $\Delta(x) / l \in 
\id{d}^{-2}\id N ^{-1} \hatOO$.
\end{enumerate}
\end{lemma}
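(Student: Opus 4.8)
The plan is to verify each of the four assertions by unpacking the definition of $\eta(x) = w(x;\id b)$ together with the properties of weight functions established earlier and the explicit description of the Weil representation action in \eqref{eqn:weil_parab}--\eqref{eqn:weil_fourier}. Parts (a) and (b) concern $\eta$ directly and should follow from the defining properties of the local weight functions. For (a), since $w(\vardot;\id b)$ is built out of the local weight functions $w\p$ at primes $\id p \mid \cond l$ (and is the characteristic function of $\id b^{-1}\widehat L$ at the remaining primes, twisted by the ideal $\id b$), the scaling property \eqref{eqn:whomog}, namely $w\p(\xi x) = \chic[\pv]{l}(\xi)\,w\p(x)$ for $\xi \in \OO[\pv]^\times$, multiplies over $\id p \mid \cond l$ to give exactly the character $\chic[\cond l]{l}(\xi)$; at primes not dividing $\cond l$ the unit $\xi\p$ preserves the lattice so $\eta$ is unchanged and the corresponding local character is trivial. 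One must also recall the precise relation between $w(\vardot;\id b)$ and $w$, namely $w(x;\id b) = \chic{l}(\xi_0)\,w(\xi_0 x)$ for a suitable idele $\xi_0$ with $\xi_0\hatOO\cap F = \id b$; the desired homogeneity is then inherited from that of $w$. For (b), the support condition \eqref{eqn:wnormal} says $w\p(x\p) = 0$ unless $\Delta(x\p) \in \pi\p\OO[\id p]$, which in particular forces $\Delta(x\p)/l \in \OO[\id p]$ locally at $\id p \mid \cond l$ (here one uses $v\p(l) = 1$ since $\cond l$ is square-free and equals the relevant part of the conductor); at primes away from $\cond l$, nonvanishing of $\eta$ means $x\p \in \id b\p^{-1} L\p$, so $\Delta(x\p) \in \idn b_\p^{-2}\OO[\id p]$ and, since $l$ and $\id b$ are units there, $\Delta(x\p)/l \in \OO[\id p]$. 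Combining over all primes gives $\Delta(x)/l \in \hatOO$.

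For parts (c) and (d), I would apply \eqref{eqn:weil_fourier}: ${^{\iota}\eta}(x) = c_S \int_{\widehat F^3} \eta(y)\, e_\mbf(-x^t S y)\,dy$, where $S$ is the Gram matrix of $-u\disc/l$, so that $x^t S y$ is the associated bilinear form, i.e. $x^t S y = -u\,B_{\disc}(x,y)/l$ for the symmetric bilinear form $B_{\disc}$ polarizing $\disc$. For (c), substituting $x \mapsto \xi x$ with $\xi \in \hatOOx$ and then changing variables $y \mapsto \xi^{-1} y$ in the integral (legitimate since $\xi$ is a unit idele, preserving Haar measure up to the trivial normalization), one gets ${^{\iota}\eta}(\xi x) = c_S\int \eta(\xi^{-1} y)\, e_\mbf(-x^t S y)\,dy$; now invoke part (a) in the form $\eta(\xi^{-1} y) = \chic[\cond l]{l}(\xi^{-1})\,\eta(y) = \chic[\cond l]{l}(\xi)\,\eta(y)$ (using that $\chic[\cond l]{l}$ is a quadratic character, so equals its own inverse), which pulls the character out of the integral and yields the claim. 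The main subtlety here is making sure the change of variables interacts correctly with the bilinear form: one needs $(\xi x)^t S y = x^t S (\xi y)$, which holds because $\xi$ is a scalar idele acting diagonally.

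For (d), which is the part I expect to be the main obstacle, I would compute the Fourier transform support more carefully. The function $\eta$ is $l\id b\widehat L$-periodic and supported in $\id b^{-1}\widehat L$; hence its Fourier transform (with respect to the pairing $x \mapsto e_\mbf(-x^t S y)$) is supported on the dual lattice of $l\id b\widehat L$ with respect to this pairing and is a combination of characters indexed by $\id b^{-1}\widehat L / l\id b\widehat L$. Concretely, ${^{\iota}\eta}(x) \neq 0$ forces $e_\mbf(-x^t S y) $ to be non-trivial-averaging-friendly, i.e. $x$ must lie in the set $\{x : x^t S y \in \id d^{-1} \text{ for all } y \in l\id b\widehat L\}$, using the characterization recalled just before the lemma that $e_\mbf(\xi\zeta) = 1$ for all $\zeta \in \OO$ iff $\xi \in \id d^{-1}$. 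Unwinding $x^t S y = -u\,B_{\disc}(x,y)/l$ and using that $L$ has level $\id N$ (so $\Delta(L^\sharp) \subseteq \id N^{-1}$, controlling the dual lattice $L^\sharp$ relative to $L$), together with $u$ a unit, this condition becomes $\disc$-integrality of $x$ against $\id d^{-1}\id b^{-1} L^\sharp$ scaled appropriately, which after bookkeeping of the ideals $\id d$, $\id N$, $\id b$ gives $\Delta(x)/l \in \id d^{-2}\id N^{-1}\hatOO$. The bookkeeping of which power of $\id d$ appears — one factor from the dual-lattice pairing being valued in $\id d^{-1}$, and the interplay with $S$ having determinant $lu$ — is where care is needed; I would handle it prime by prime, treating $\id p \mid \cond l$, $\id p \mid 2\id d\id N$, and the remaining primes separately, and in each case reduce to the local statement using \eqref{eqn:wnormal}, \eqref{eqn:wvector}, and the level condition on $L\p$. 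Throughout, I would lean on Shimura's computations in \cite[Sect.~11]{shim-hh} for the shape of the Weil action and only supply the quaternionic-specific input coming from the weight functions.
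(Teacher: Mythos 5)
Parts (a)--(c) of your plan are fine and essentially the paper's argument: (a) and (b) are immediate from \eqref{eqn:whomog} and \eqref{eqn:wnormal} once the twist by $\id b$ is tracked, and (c) follows from (a) and \eqref{eqn:weil_fourier} by the change of variables you describe. (Two small slips in (b): at $\pv \mid \cond l$ what holds is $v\p(l\,\id b^2)=1$, not $v\p(l)=1$, and away from $\cond l$ it is the ideal $l\,\id b^2=\cond l$ that is locally trivial, not $l$ and $\id b$ separately; both are harmless given that you do invoke the $\id b$-twist.)

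Part (d), however, has a genuine gap. The periodicity/dual-lattice argument you propose --- ${^{\iota}\eta}(x)\neq 0$ forces $x^t S l\, y \in \id d^{-1}$ for all $y \in \id b\widehat L$, hence $x \in \id d^{-1}\id b^{-1}\widehat L^\sharp$ --- only yields $\Delta(x)/l \in \cond l^{-1}\id d^{-2}\id N^{-1}\hatOO$: with $\zeta$ an idele generating $\id b$, the level condition gives $\Delta(\zeta x)\in \id d^{-2}\id N^{-1}\hatOO$, and since $l\,\id b^{2}=\cond l$ the passage from $\Delta(\zeta x)$ to $\Delta(x)/l$ costs exactly a factor $\cond l$. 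No bookkeeping of $\id d,\id N,\id b$ removes that factor, and it cannot be removed from support and periodicity alone: a generic $l\id b\widehat L$-periodic function supported on $\id b^{-1}\widehat L$ (locally at $\pv\mid\cond l$, an arbitrary function on $L\p/\pi\p L\p$) has Fourier transform that does not vanish on classes with $\Delta$ a unit, so the stronger conclusion is false for such functions. What is missing is precisely the divisibility $\pv \mid \Delta(\zeta\p x\p)$ for every $\pv \mid \cond l$, and this is where the specific structure of the weight function enters: the paper assumes $\pv\nmid\Delta(\zeta\p x\p)$, observes $\zeta\p x\p \in L\p^\sharp = L\p$, and uses \cite[Lemma 2.5]{tornatesis} to produce $z\p \in B\p^\times$ whose conjugation fixes $L\p$ and $\zeta\p x\p$ while $\chic[\pv]{l}(\norm(z\p))=-1$; then \eqref{eqn:wtransp} gives $\eta(z\p^{-1}yz\p)=-\eta(y)$, and a change of variables in \eqref{eqn:weil_fourier} forces ${^{\iota}\eta}(x)=-{^{\iota}\eta}(x)=0$. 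The properties \eqref{eqn:wnormal} and \eqref{eqn:wvector} you list for the primes dividing $\cond l$ constrain the support of $\eta$ itself, not of its Fourier transform, and do not supply this step; the conjugation equivariance \eqref{eqn:wtransp} (or an equivalent explicit computation of the local Fourier transform of $w\p$) is indispensable and never appears in your outline.
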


\begin{proof}

The proofs of (a) and (b) follow immediately from \eqref{eqn:whomog} and
\eqref{eqn:wnormal}, respectively.
Furthermore, (c) follows from (a) and \eqref{eqn:weil_fourier}.

To prove (d), we first note that since $\eta$ is $l\id b 
\widehat L$-periodic by \eqref{eqn:weil_fourier} we have that
\[
{^{\iota}\eta}(x) = e_{\mathbf{f}}(x^t Sl y) \, {^{\iota}\eta}(x)
\]
for every $y \in \id b \widehat L$.
Hence if ${^{\iota}\eta}(x) \neq 0$, since $u$ is a unit we have that 
$x^t Sl y \in \id{d}^{-1}$ for every $y \in \id b \widehat L$,
i.e. $x \in (\id d \id b \widehat L)^\sharp = \id d^{-1} \id b^{-1}
\widehat L^\sharp$.
Since $L$ has level $\id N$, this implies that $\Delta(\zeta x) \in 
\id{d}^{-2}\id N ^{-1} \hatOO$,
where we let $\zeta \in \hatx F$ be such that $\zeta\hatOO \cap F = \id b$.
In particular, since $\cond l$ is prime to $\id{dN}$, we have 
that $\Delta(\zeta x)$ is $\cond l$-integral. Since $\cond l$ is square-free, it
remains to prove that $\pv \mid \Delta(\zeta\p x \p)$ for every $\pv \mid \cond l$.

Suppose $\pv \nmid \Delta(\zeta\p x\p)$ for some $\pv\mid\cond l$.
Since $\pv\nmid\id{d}$, we have that $\zeta\p x\p\in
L\p^\sharp$, and since $\pv\nmid\id N$ we have that
$L\p^\sharp=L\p$. 
We can then use \cite[Lemma 2.5]{tornatesis}
(which requires $\pv$ to be odd and $L\p=L\p^\sharp$)
to produce an element $z\p \in B\p^\times$ such that conjugation by $z\p$ fixes
both $L\p$ and $\zeta\p x\p$, and such that $\chic[\pv]{l}(\norm(z\p)) = -1$.
In particular, by \eqref{eqn:wtransp} we have that $\eta(z\p^{-1} y 
z\p) = -\eta(y)$ for every $y \in \widehat{F}^3$. Hence by 
\eqref{eqn:weil_fourier} we obtain that
\begin{align*}
{^{\iota}\eta}(x) = \, &
{^{\iota}\eta}(z\p^{-1} x z\p) =
c_S \int_{\widehat{F}^3} \eta(y) e_{\mathbf f} (-(z\p^{-1} x z\p)^t S y) \, dy \\ 
= \, & c_S \int_{\widehat{F}^3} \eta(y) e_{\mathbf f} (-x^t S (z\p y z\p^{-1}))
\,dy
= c_S \int_{\widehat{F}^3}\eta(z\p^{-1} y z\p) e_{\mathbf f} (-x^t S y)) \, dy\\ 
= & - c_S \int_{\widehat{F}^3} \eta(y) e_{\mathbf f} (-x^t S y)) \, dy
= - {^{\iota}\eta}(x).
\end{align*}
This contradicts ${^{\iota}\eta}(x) \neq 0$, hence $\pv \mid \Delta(\zeta\p x\p)$, 
which completes the proof.
\end{proof}

The following result is a modification of \cite[Proposition 11.7]{shim-hh},
adapted for our purposes.

\begin{prop}\label{prop:eta_modularity}

The function $\eta$ satisfies that
\begin{equation}\label{eqn:weil_gral}
	{^{\beta}\eta} = \chic[\cond u]{u}(a_\beta) \,
	\eta \qquad \forall\, \beta \in \Gamma[4\id{N}].
\end{equation}

\end{prop}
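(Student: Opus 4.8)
The plan is to verify \eqref{eqn:weil_gral} by reducing the group $\Gamma[4\id N]$ to a convenient set of generators and checking the transformation law on each. The group $\Gamma[4\id N] = \Gamma[2\id d^{-1}, 2^{-1}\id d \cdot 4\id N]$ is, up to finite index issues already absorbed in Proposition~\ref{prop:shimU}, generated by the unipotent elements $\smat{1 & b \\ 0 & 1}$ with $b \in 2\id d^{-1}$, the lower-unipotent elements $\smat{1 & 0 \\ c & 1}$ with $c \in 2^{-1}\id d \cdot 4\id N = 2\id d\id N$, and the diagonal elements $\smat{\xi & 0 \\ 0 & \xi^{-1}}$ with $\xi \in \OOx$; equivalently one can work with the upper unipotents and the single element $\iota$ together with the diagonal torus, since $\SL_2$ is generated by these. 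First I would treat the diagonal elements: for $\beta = \smat{\xi & 0 \\ 0 & \xi^{-1}}$ with $\xi \in \OOx$, Proposition~\ref{prop:shimU} applies (here $\id a_\beta = \OO$, $d_\beta = \xi^{-1}$), giving ${^\beta \eta}(x) = \chic[\mathbf a]{lu}(\xi^{-1})\,\eta(\xi^{-1}x)$; then using Lemma~\ref{lem:weil_casos}(a) to pull the $\xi^{-1}$ out of $\eta$ as $\chic[\cond l]{l}(\xi^{-1})$, and combining the characters via $\chic{lu} = \chic{l}\chic{u}$, the $\chic{l}$ parts cancel at the places dividing $\cond l$ and at the archimedean places (using that $u \in F^l$ so $lu$ is totally positive, hence $\chic[\mathbf a]{lu}$ is trivial), leaving exactly $\chic[\cond u]{u}(a_\beta)$ since $a_\beta = \xi$ and $\chic{u}$ has conductor dividing $4 \mid 4\id N$.

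Next I would handle the upper unipotent generators $\beta = \smat{1 & b \\ 0 & 1}$ with $b \in 2\id d^{-1}$. Here $a_\beta = 1$, so the claimed right-hand side is just $\eta$, and by \eqref{eqn:weil_parab} we have ${^\beta\eta}(x) = e_\mbf(x^t S x\, b/2)\,\eta(x)$. So it suffices to show that whenever $\eta(x) \neq 0$ we have $e_\mbf(x^t S x\, b/2) = 1$ for all $b \in 2\id d^{-1}$, i.e. that $x^t S x = -u\Delta(x)/l \in \id d$ on the support of $\eta$. By Lemma~\ref{lem:weil_casos}(b), $\Delta(x)/l \in \hatOO$ on the support, and since $u$ is a unit this gives $x^t Sx \in \hatOO \subseteq \id d^{-1}\cdot(\text{something})$; more precisely $b/2 \in \id d^{-1}$ and $x^tSx \in \OO$ force the argument into $\id d^{-1}$, and $e_\mbf$ kills $\id d^{-1}$ by the remark on $e_\mbf$ recorded just before the definition of $\Gamma\p[\id b,\id c]$. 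The lower unipotent generators $\smat{1 & 0 \\ c & 1}$ with $c \in 2\id d\id N$ are the genuine content: writing $\smat{1 & 0 \\ c & 1} = \iota^{-1}\smat{1 & -c \\ 0 & 1}\iota$ (or $\iota \smat{1 & -c \\ 0 & 1}\iota^{-1}$ up to sign), I would use \eqref{eqn:weil_fourier} to express ${^\iota\eta}$ as the Fourier transform of $\eta$, apply \eqref{eqn:weil_parab} in the Fourier-transformed picture, and then apply \eqref{eqn:weil_fourier} again; the phase that appears is controlled by Lemma~\ref{lem:weil_casos}(d), which says that $\Delta(x)/l \in \id d^{-2}\id N^{-1}\hatOO$ on the support of ${^\iota\eta}$, so that $-u\Delta(x)\cdot c / l \in \id d^{-2}\id N^{-1} \cdot 2\id d\id N = 2\id d^{-1} \subseteq \id d^{-1}$, whence the relevant exponential is again trivial and $a_\beta = 1$ matches the trivial character on the right.

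The main obstacle is bookkeeping in the $\iota$-conjugated (lower unipotent) case: one must be careful that the Weil action's cocycle contributes no extra constant or character when composing $\iota$, a parabolic, and $\iota^{-1}$, and that the factor $c_S$ from \eqref{eqn:weil_fourier} cancels between the two applications of $\iota$ (this is exactly why \eqref{eqn:weil_fourier} is normalized so that $\iota^2$ acts by $x \mapsto \pm x$ without spurious constants, as in Shimura). I would quote \cite[Sect.~11]{shim-hh} for the cocycle identity $^{\beta_1}({^{\beta_2}\eta}) = {^{\beta_1\beta_2}\eta}$ on the subgroup in question, reducing the lower-unipotent check to the already-handled parabolic case transported through $\iota$, with the support constraint from Lemma~\ref{lem:weil_casos}(d) supplying precisely the integrality needed. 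A secondary point to check is that the generators I listed indeed generate $\Gamma[4\id N]$ modulo the ambiguity allowed in Proposition~\ref{prop:shimU}; this is standard for congruence subgroups of $\SL_2$ over a number ring, using that $\Gamma[4\id N]$ contains $\Gamma[2\id d^{-1}, 2\id d\id N]$ and elementary divisor arguments. Once all generators are verified, \eqref{eqn:weil_gral} follows since both sides are characters (in $\beta$) of $\Gamma[4\id N]$ agreeing on generators, hence equal.
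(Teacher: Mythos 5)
Your treatment of the two unipotent families matches the paper: for $\beta$ upper unipotent with $b_\beta\in 2\id d^{-1}$ you use \eqref{eqn:weil_parab} together with Lemma~\ref{lem:weil_casos}(b), and for the $\iota$-conjugated (lower unipotent) family with $c_\beta\in 2\id d\id N$ you transport the parabolic computation through $\iota$ and use Lemma~\ref{lem:weil_casos}(d); that is exactly how the paper argues, and the cocycle worry is harmless. The genuine gap is in your generation step. You claim that $\Gamma[4\id N]$ is generated by these unipotents together with the diagonal matrices $\smat{\xi & 0\\ 0 & \xi^{-1}}$, $\xi\in\OOx$, calling this ``standard'' and saying any finite-index discrepancy is ``absorbed in Proposition~\ref{prop:shimU}''. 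Neither claim holds. Already for $F=\Q$ a principal congruence subgroup $\Gamma(N)$, $N\ge 3$, contains essentially no diagonal elements, and the subgroup generated by its unipotent elements has infinite index, so an ``elementary divisor'' argument cannot produce the asserted generation; over general totally real fields the situation is no better without serious input. Moreover Proposition~\ref{prop:shimU} does not absorb missing generators: it only computes ${^\beta\eta}$ for $\beta\in\SL_2(F)\cap\smat{\xi&0\\0&\xi^{-1}}U$ for one specific open subgroup $U$, so it cannot be invoked for an arbitrary element of $\Gamma[4\id N]$ that you have failed to decompose.

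The paper's route around this is the actual content you are missing. It quotes \cite[Lemma 3.4]{shim-hh}: for any integral $\id e\subseteq(2\id d^{-1})\cap(2^{-1}\id d\cdot 4\id N)$, the group $\Gamma[4\id N]$ is generated by its intersections with $P$ and $\iota P\iota^{-1}$ together with the full congruence group $\Gamma[\id e,\id e]$ (not just its diagonal elements). One then shrinks $\id e$ so that $\id e\subseteq 2\id d\id N\cond{lu}$ and so that $\Gamma[\id e,\id e]$ lands in the range of Proposition~\ref{prop:shimU}, and for a general $\beta\in\Gamma[\id e,\id e]$ one must exhibit the local decomposition $\beta\p=\smat{\xi\p&0\\0&\xi\p^{-1}}(I+\text{small})$ with $\xi\p=a_\beta$ at the places where $U\p$ is a genuine congruence condition and $\xi\p=1$ elsewhere, then do the character bookkeeping: $\id a_\beta=\OO$, $a_\beta d_\beta\equiv 1\bmod\cond{lu}$ gives $\chic[\mathbf a]{lu}(d_\beta)\chic[*]{lu}(d_\beta\id a_\beta^{-1})=\chic[\cond{lu}]{lu}(a_\beta)$, and then Lemma~\ref{lem:weil_casos}(a) plus $\cond{lu}=\cond l\cond u$ yields $\chic[\cond{lu}]{lu}(a_\beta)\chic[\cond l]{l}(a_\beta)=\chic[\cond u]{u}(a_\beta)$. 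Your diagonal-unit computation is a special case of this, but restricted to diagonal units it does not reach all of $\Gamma[\id e,\id e]$, so as written the proof does not cover all of $\Gamma[4\id N]$.
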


\begin{proof}

Denote
\[
P = \left\{\beta \in \SL_2(F):a_\beta = d_\beta = 1, c_\beta =
0\right\}.
\]
By \cite[Lemma 3.4]{shim-hh}, for every integral ideal $\id e$ 
such that $\id e \subseteq (2\id{d}^{-1}) \cap (2^{-1} \id{d} \cdot 
4\id{N})$, the group $\Gamma[4\id{N}]$ is generated by its intersection with
$P$, its intersection with $\iota P \iota^{-1}$, and $\Gamma[\id e, \id e]$.

Let $\beta \in \Gamma[4\id{N}]$.
If $\beta \in P$, then \eqref{eqn:weil_gral} follows by combining
\eqref{eqn:weil_parab} and part (b) of Lemma~\ref{lem:weil_casos}, since
$b_\beta \in 2 \id d^{-1}$.
If $\beta \in \iota P \iota^{-1}$, then \eqref{eqn:weil_gral} follows by
combining \eqref{eqn:weil_parab} (with ${^{\iota}\eta}$ instead of $\eta$) and
part (d) of Lemma~\ref{lem:weil_casos}, since $c_\beta \in 2 \id d \cdot \id N$.

Let $U$ be as in Proposition~\ref{prop:shimU}. We can assume that $U =
\prod_{\pv \in \mathbf{f}} U\p$ with $U\p = \{\beta \in \GL_2({\OO}\p) : \beta
\equiv I \mod \pi\p^{r\p} M_2({\OO}\p)\}$, where the $r\p$ are nonnegative
integers such that $r\p = 0$ for almost all $\pv$.
Let $\id e \subseteq \OO$ be such that $r\p = v\p(\id e)$.
Changing $\id e$ by a smaller ideal, we can assume that
$\id e \subseteq 2 \id{d} \id N \cond{lu}$.
In particular, $\id e$ satisfies the hypotheses mentioned above.

Let $\beta \in \Gamma[\id e, \id e]$. 
Since $\beta \in \Gamma[\id{d}^{-1},\id{d}]$, we have that $\id{a}_\beta =
\OO$. 
Furthermore, we have that $\beta \in \Gamma[\cond{lu}]$, which implies that
$a_\beta d_\beta \equiv 1 \mod \cond{lu}$.
Hence
\begin{equation}\label{eqn:caracteres}
\chic[\mathbf{a}]{lu}(d_\beta) \,
\chic[*]{lu}(d_\beta \id{a}_\beta^{-1}) = \chic[\cond{lu}]{lu}(d_\beta) =
\chic[\cond{lu}]{lu}(a_\beta). 
\end{equation}
Let $\pv \in \mathbf{f}$.  If $\pv$ is such that $r\p > 0$ take $\xi\p = 
a_\beta$. Else, take $\xi\p = 1$. 
Since $a_\beta d_\beta \equiv 1 \mod \id \pi\p^{r\p}$, we have that
\[
\beta\p = 
\left(\begin{matrix} \xi\p & 0\\ 0&  \xi\p^{-1}\end{matrix}\right)
\left(I + \left(\begin{matrix} 0 & \xi\p^{-1}b_\beta  \\
\xi\p c_\beta &  \xi\p d_\beta - 1\end{matrix}\right)\right) \quad \in
\left(\begin{matrix} \xi\p & 0\\ 0&  \xi\p^{-1}\end{matrix}\right) U\p.
\]  
Combining this with Proposition~\ref{prop:shimU}, part (a) of 
Lemma~\ref{lem:weil_casos} and \eqref{eqn:caracteres} we get that
\[
	{^{\beta}\eta}(x) =
	\chic[\cond{lu}]{lu} (a_\beta) \, \eta(\xi x) = 
	\chic[\cond{lu}]{lu} (a_\beta) \, \chic[\cond{l}]{l} (a_\beta) \, \eta(x).
\]
Since $\cond l$ is prime to $\cond u$ we have that $\cond{lu} = \cond l \cond
u$. Furthermore, we have that $\chic[\cond l]{u} = 1$ and $\chic[\cond
u]{l}(a_\beta) = 1$, since $a_\beta \in \OOx[\pv]$ for $\pv\mid \cond u$.
These facts imply that $\chic[\cond{lu}]{lu} (a_\beta) \chic[\cond{l}]{l} (a_\beta)
= \chic[\cond{u}]{u} (a_\beta)$,
which completes the proof.
\end{proof}

\begin{lemma}
	Let $\pv \mid \cond l$.
	Let $\nu \in \mathcal S(F\p^3)$ be such that
	\begin{enumerate}
		\item $\nu(\xi x) = \chic[\pv]{l}(\xi) \nu(x)$ for every $\xi \in \OOx[\pv]$.
		\item $\nu(x) = 0$ unless $\Delta(x)/l \in \OO[\pv]$.
	\end{enumerate}
	Given $b \in F \cap \OO[\pv]$ and $\beta\in \SL_2(F)$ with $c_\beta = 0$,
	let $\gamma = \beta \iota \smat{1 & b \\ 0 & 1}$.
	Then ${^{\gamma}}{\nu}(0) = 0$.
\end{lemma}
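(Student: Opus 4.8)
The plan is to unwind the action $\nu \mapsto {^\gamma}\nu$ through the factorization $\gamma = \beta\iota\smat{1 & b \\ 0 & 1}$ and evaluate the resulting expression at $x = 0$. First I would apply \eqref{eqn:weil_parab} to the innermost parabolic factor, which only multiplies $\nu$ by the exponential $e_{\mathbf f}(x^t S x\, b/2)$; since this exponential equals $1$ at $x=0$, the upper-triangular piece contributes nothing once we specialize, but more importantly it does not destroy the two hypotheses on $\nu$ — indeed hypothesis (a) is preserved because $x \mapsto \chic[\pv]{l}(\xi)$ acts the same way on $\xi x$ and $x$ up to the exponential factor, which depends on $\Delta(\xi x)/l = \chic[\pv]{l}(\xi)^2 \cdot$ (a scalar) in a $\OOx[\pv]$-invariant way when $b$ is $\pv$-integral. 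So after this step I have a new Schwartz function $\nu' = {^{\smat{1&b\\0&1}}}\nu$ still satisfying (a) and (b), and it suffices to show $({^{\beta\iota}}\nu')(0) = 0$.

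Next I would handle the $\iota$ factor via \eqref{eqn:weil_fourier}: ${^\iota}\nu'(x) = c_S \int_{\widehat F^3} \nu'(y)\, e_{\mathbf f}(-x^t S y)\,dy$, so ${^\iota}\nu'(0) = c_S \int \nu'(y)\,dy$. The key claim is that this integral vanishes, and this is where hypothesis (a) does the work: because $\chic[\pv]{l}$ is a \emph{nontrivial} character on $\OOx[\pv]$ (as $\pv \mid \cond l$ means $\pv$ ramifies in $F(\sqrt l)/F$, so $l$ is a non-square locally), averaging $\nu'(\xi y)$ over $\xi \in \OOx[\pv]$ against $\chic[\pv]{l}(\xi)$ forces $\int \nu'(y)\,dy = 0$ by orthogonality of characters — concretely, substitute $y \mapsto \xi y$ in the integral, use (a) to pull out $\chic[\pv]{l}(\xi)$, observe the measure is $\OOx[\pv]$-invariant, and conclude the integral is $\chic[\pv]{l}(\xi)$ times itself for all $\xi$, hence zero. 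Finally the outer factor $\beta$ has $c_\beta = 0$, so it is an upper-triangular matrix (a product of a diagonal and a parabolic); by \eqref{eqn:weil_parab} and the diagonal scaling this sends ${^\iota}\nu'$ to a scalar times ${^\iota}\nu'(\xi\,\cdot)$ for some $\xi \in \hatx F$, which at $x = 0$ is just a scalar multiple of ${^\iota}\nu'(0) = 0$. Hence ${^\gamma}\nu(0) = 0$.

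The main technical point to get right is the bookkeeping in the first step: I must verify that passing from $\nu$ to ${^{\smat{1&b\\0&1}}}\nu$ genuinely preserves hypothesis (a), which relies on $b$ being $\pv$-integral so that the exponential $e_{\mathbf f}$-factor, restricted to the support where $\Delta(x)/l \in \OO[\pv]$ by (b), is already trivial at the $\pv$-adic place (this is exactly the mechanism used in Proposition~\ref{prop:eta_modularity} for the parabolic generators). Once that is in place, the vanishing of the Fourier transform at $0$ is a clean character-orthogonality argument and the outer $\beta$ is harmless. The only mild subtlety is making sure the global character $\chic{l}$ splits as a product over places so that the local nontriviality at $\pv \mid \cond l$ can be isolated; this is already built into the setup via $\chic[\cond l]{l} = \prod_{\pv \mid \cond l} \chic[\pv]{l}$ and the fact that $\cond l$ is square-free.
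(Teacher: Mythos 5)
Your proposal is correct and is essentially the paper's own proof: the outer $\beta$ (upper triangular) is absorbed using \eqref{eqn:weil_parab} together with the diagonal scaling of Proposition~\ref{prop:shimU}, the value at $0$ reduces to $c_S\int_{F\p^3} e\p(x^t S x\, b/2)\,\nu(x)\,dx$, the exponential is trivial on the support of $\nu$ because $b\in\OO[\pv]$ and hypothesis (b) holds, and the substitution $x\mapsto\xi x$ with hypothesis (a) and the nontriviality of $\chic[\pv]{l}$ on $\OOx[\pv]$ forces the integral to vanish. The only blemishes are cosmetic: the relevant point is triviality of the exponential on the whole support (not its value at $x=0$), the intermediate justification involving $\chic[\pv]{l}(\xi)^2$ is garbled (one simply has $\Delta(\xi x)=\xi^2\Delta(x)$, still $\pv$-integral), and the local $e\p$ should replace $e_{\mathbf f}$ — but you state the correct mechanism explicitly at the end, so the argument stands.
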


\begin{proof}
    By \eqref{eqn:weil_parab} and Proposition~\ref{prop:shimU} we can assume
	that $\beta = 1$. Using \eqref{eqn:weil_parab} and \eqref{eqn:weil_fourier}
	we have that	
	\[
		{^\gamma}\nu(0) = c_S \int_{F\p^3} e\p(x^t S x \, b/2) \nu(x) \,dx.
	\]
	Then the hypotheses on $\nu$ imply that for every $\xi \in \OOx[\pv]$ we have
	that
	\[
		{^\gamma}\nu(0) 
		 = c_S  \int_{F\p^3} \nu(x) \,dx
		 = \chic[\pv]{l}(\xi) \, c_S \int_{F\p^3} \nu(x) \,dx.
	\]
	Hence the result follows, since $\chic[\pv]{l}$ is non-trivial on $\OOx[\pv]$.
\end{proof}

\begin{lemma}\label{lem:cusp_eta}

	Assume that $l \notin \Fc$. Let $\beta \in \SL_2(F)$.
	Then ${^\beta}\eta(0) = 0$.

\end{lemma}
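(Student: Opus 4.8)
The plan is to reduce the vanishing of ${^\beta}\eta(0)$, for an arbitrary $\beta \in \SL_2(F)$, to the local statement proved in the previous lemma by means of a Bruhat-type decomposition. First I would dispose of the case $c_\beta = 0$: then $\beta \in P \cdot \left(\begin{smallmatrix} \xi & 0 \\ 0 & \xi^{-1}\end{smallmatrix}\right)$ for some $\xi \in F^\times$, so by \eqref{eqn:weil_parab} (which fixes the value at $x=0$, since $x^tSx = 0$ there) and Proposition~\ref{prop:shimU} we get ${^\beta}\eta(0) = \text{(nonzero scalar)} \cdot \eta(0)$, and $\eta(0) = w(0;\id b) = 0$ because $l \notin \Fc$ means $\chic[\cond l]{l}$ is nontrivial on $\hatOOx$, which forces $\eta(0)=0$ by part (a) of Lemma~\ref{lem:weil_casos}. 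So I may assume $c_\beta \neq 0$.

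When $c_\beta \neq 0$ the standard Bruhat decomposition in $\SL_2$ gives
\[
	\beta = \begin{pmatrix} 1 & a_\beta c_\beta^{-1} \\ 0 & 1 \end{pmatrix}
	\begin{pmatrix} 0 & -1 \\ 1 & 0 \end{pmatrix}
	\begin{pmatrix} c_\beta & 0 \\ 0 & c_\beta^{-1} \end{pmatrix}
	\begin{pmatrix} 1 & d_\beta c_\beta^{-1} \\ 0 & 1 \end{pmatrix},
\]
so writing $\beta_1 = \left(\begin{smallmatrix} 1 & a_\beta c_\beta^{-1} \\ 0 & 1\end{smallmatrix}\right)\left(\begin{smallmatrix} c_\beta & 0 \\ 0 & c_\beta^{-1}\end{smallmatrix}\right)$ (which has lower-left entry $0$), $b = d_\beta c_\beta^{-1} \in F$, and $\gamma = \beta_1 \iota \left(\begin{smallmatrix} 1 & b \\ 0 & 1\end{smallmatrix}\right) = \beta$, I would apply the previous lemma locally at any single prime $\pv \mid \cond l$. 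The previous lemma requires a local Schwartz function $\nu \in \mathcal S(F\p^3)$ satisfying conditions (a) and (b) there; I take $\nu = w\p(\vardot;\id b)$, i.e. the $\pv$-component of $\eta$ (using that $\eta = \prod_{\pv \mid \cond l} w\p(\vardot\,;\id b)\p$ on its support, up to the characteristic function of $\id b^{-1}\widehat L$ at primes away from $\cond l$). Conditions (a) and (b) for this $\nu$ are exactly parts (a) and (b) of Lemma~\ref{lem:weil_casos} read locally at $\pv$, and $b = d_\beta c_\beta^{-1} \in F$ can be arranged to be $\pv$-integral after adjusting by an element of $F \cap \OO\p$, which only changes $\gamma$ by a unipotent upper-triangular matrix over $F$ preserving the conclusion.

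The one technical point — and the main obstacle — is matching the \emph{global} Weil action ${^\beta}\eta$ with the \emph{local} action ${^\gamma}\nu$ at $\pv$: the action on $\mathcal S(\widehat F^3)$ factors as a restricted tensor product of local actions, so ${^\beta}\eta(0) = \prod_{\pv \in \mbf} {^{\beta}}(\eta\p)(0)$ whenever the product makes sense, and it suffices that one local factor vanish. Thus I would fix one prime $\pv_0 \mid \cond l$ (such a prime exists since $l \notin \Fc$ forces $\cond l \neq \OO$ — here one uses that $\cond l$ is square-free and nontrivial), apply the previous lemma to the $\pv_0$-component with $\nu = w\p[\pv_0](\vardot;\id b)$ and the $\gamma$ coming from the Bruhat decomposition above, conclude ${^{\beta}}(\eta_{\pv_0})(0) = 0$, and hence ${^\beta}\eta(0)=0$. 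The only care needed is to check that the decomposition of the global action into local pieces is the one used by Shimura (it is, by construction of the action from the local Weil representations) and that the normalizing constants $c_S$ are nonzero locally, which is part of the cited statement \eqref{eqn:weil_fourier}.
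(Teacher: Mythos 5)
Your reduction to a single local computation at a prime $\pv \mid \cond l$ (via ${^\beta}\eta(0) = \prod_{\pv} {^\beta}\eta\p(0)$, with such a prime existing because $l \notin \Fc$) is exactly the paper's strategy, and your disposal of the case $c_\beta = 0$ is fine. But the main case has a genuine gap: the auxiliary lemma requires the unipotent parameter $b = d_\beta c_\beta^{-1}$ to lie in $F \cap \OO[\pv]$, and there is no reason for $d_\beta c_\beta^{-1}$ to be $\pv$-integral at the chosen prime. Your proposed remedy --- ``adjusting by an element of $F \cap \OO[\pv]$'' --- cannot work: if $v\p(d_\beta c_\beta^{-1}) < 0$, then subtracting a $\pv$-integral element leaves the valuation unchanged, so $b$ stays non-integral; and picking a different prime dividing $\cond l$ need not help either, since $d_\beta c_\beta^{-1}$ may be non-integral at every such prime. (There is also a small algebraic slip: in moving the diagonal factor past $\iota$ in the Bruhat decomposition the entry $c_\beta$ must be inverted, so the upper-triangular factor is $\smat{1/c_\beta & a_\beta \\ 0 & c_\beta}$, not the $\beta_1$ you wrote; this is cosmetic, unlike the integrality issue.)

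The paper closes this gap with a second decomposition used precisely when $c_\beta/d_\beta \in \OO[\pv]$ (which covers the complementary case, including $c_\beta = 0$): one writes $\beta = -\smat{1/d_\beta & b_\beta \\ 0 & d_\beta}\,\iota\,\smat{1 & -c_\beta/d_\beta \\ 0 & 1}\,\iota$ and applies the auxiliary lemma to $\nu = {^\iota}\eta\p$ instead of $\eta\p$, with the integral parameter $-c_\beta/d_\beta$. The hypotheses on this $\nu$ are then supplied by parts (c) and (d) of Lemma~\ref{lem:weil_casos} --- this is where the assumption that $\cond l$ is prime to $\id d \id N$ enters, ensuring that the support condition $\Delta(x)/l \in \id d^{-2}\id N^{-1}\hatOO$ gives $\pv$-integrality at $\pv \mid \cond l$. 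Your argument never invokes parts (c) and (d) in the main case, which is the missing ingredient; without the second decomposition (or some equivalent device exploiting ${^\iota}\eta\p$), the proof does not go through for those $\beta$ with $d_\beta c_\beta^{-1} \notin \OO[\pv]$.
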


\begin{proof}
	Let $\beta = \smat{a & b \\ c & d}$.
	We have that ${^\beta}\eta(0) = \prod_{\pv \in \mbf} {^\beta}\eta\p(0)$.
	Since $l \notin \Fc$, there exists $\pv$ such that $\pv \mid \cond l$.
	Hence it suffices to prove that ${^\beta}\eta\p(0) = 0$.

	Write
	\[
	\beta =
	 \begin{cases}
         \phantom{-}
		 \pmat{ 1/c & a \\ 0 & c }
		 \iota
         \pmat{1& \phantom{-}d/c \\ 0 & 1} &
         \text{if $d/c \in \OO[\pv]$,} \\
		 - \pmat{ 1/d & b \\ 0 & d }
		 \iota
		 \pmat{1& -c/d \\ 0 & 1} \iota \quad &
         \text{if $c/d \in \OO[\pv]$.} \\
	 \end{cases}
	\]
	Then the result follows by applying the previous lemma to $\nu = \eta\p$ if
	$d/c \in \OO[\pv]$, and to $\nu = {^\iota}\eta\p$ if $c/d \in \OO[\pv]$.
	In the first case, the hypotheses needed on $\nu$ hold by parts (a) and (b)
	of Lemma~\ref{lem:weil_casos}.
	In the second case, using that $\cond l$ is prime to $\id d \id N$, they
	hold by parts (c) and (d) of Lemma~\ref{lem:weil_casos}.
\end{proof}

\begin{proof}[Proof of Proposition~\ref{prop:modularity}]

For each $\tau\in\mathbf{a}$ write $S_\tau = 
A_\tau^t\,A_\tau$, with $A_\tau\in\GL_3(\R)$. 
Then there exist homogeneous harmonic polynomials 
$Q_\tau(X)$ of degree $k_\tau$ such that 
\[
P(y) = \prod_{\tau\in\mathbf{a}} Q_\tau(A_\tau\,y_\tau)\,.
\]
For each $\tau$ we may assume that $Q_\tau(X) = (z_\tau^t \,X)^{k_\tau}$, with
$z_\tau \in \C^3$ such that $z_\tau^t \,z_\tau = 0$ (see \cite[Theorem
9.1]{iwaniec-topics}). Let $w_\tau = (A_\tau^{-1}\, z_\tau)^t$. 
Then we have that $w_\tau^t\,S_\tau\,w_\tau = 0$. Let $\sigma:F_\mba^3\to\C$ 
be the function given by $\sigma(x) = \prod_{\tau\in\mathbf{a}} 
(w_\tau^t\,S_\tau\,x_\tau)^{k_\tau}$, so that $P(y) =
\sigma(y)$. Then we have that $\tlu(z)=f(z;\eta)$,
where $f(z;\eta)$ is the function given by
\[
f(z;\eta) 
= \sum_{x\in F^3}\eta(x)\,\sigma(x)\,e_F(x^t\,S\,x, z/2)\,.
\]

The modularity of $\tlu$ follows by combining 
Proposition~\ref{prop:eta_modularity} and \cite[Proposition 
11.8]{shim-hh}, since the latter claims that
\begin{equation}\label{eqn:shim_automorph}
f\left(\beta z; {{^{\beta}\eta}}\right)
= \chic[\cond{-1}]{(-1)}(a_\beta) K(\beta,z) f(z;\eta)
\end{equation}
for every $\beta$ in (the certain subgroup of) $\SL_2(F)$,
where we denote by $K(\beta,z)$ the automorphy factor of weight
$\mathbf{3/2+k}$.

To prove \eqref{eqn:cfourier}, we let $U$ be as in Proposition~\ref{prop:shimU}.
Assume that $U$ is small enough so that $d_{\beta\p} \equiv 1 \mod \cond{lu}
\OO[\pv]$ for every $\beta \in U$, for every $\pv \mid \cond{lu}$,
and so that $U \subseteq \widehat \Gamma[2\id{d}^{-1},2 \id{d} \cdot \id N]$.
Let $\xi \in \hatx{F}$ be such that $\xi\hatOO \cap F = \id a$,
and pick $\beta \in \SL_2(F)\cap\left(\begin{smallmatrix} \xi & 0\\ 0 &
\xi^{-1}\end{smallmatrix}\right) U$.
It is easy to verify that $\id{a}_\beta = \id{a}^{-1}$. Furthermore $d_\beta
\id a$ is prime to $\cond{lu}$. Hence since $\xi\p d_\beta \equiv 1 \mod
\cond{lu} \OO[\pv]$ for every $\pv \mid \cond{lu}$ we get that
\begin{equation*}
	\chic[\mathbf{a}]{lu} (d_\beta) \, \chic[*]{lu}(d_\beta \id{a}_\beta^{-1})  = 
	\chic[\cond{lu}]{lu} (d_\beta) \prod_{\pv \nmid \cond{lu}} \chic[\pv]{lu}(\xi\p) =
	\chic{lu}(\xi).
\end{equation*}
Similarly, we have that
$\chic[\mathbf{a}]{-u} (d_\beta) \chic[*]{-u}(d_\beta \id{a}_\beta^{-1}) = 
\chic{-u}(\xi)$.
This equalities, combined with \cite[3.14c]{shim-hh} (which requires the second
condition we imposed on $U$), Proposition~\ref{prop:shimU} 
and \eqref{eqn:shim_automorph}, imply that
\begin{multline*}
  \idn{a}^{-1/2} \sum_{D\in F} \lambda(-uD,\id{a};\tlu) \, e_F(-uD,z/2)
  = \chic{-u}(\xi) K(\beta, \beta^{-1} z) \,
  f\left(\beta^{-1}z; \eta\right) \\
  = \chic{u}(\xi) f\left(z; {{^{\beta}\eta}}\right)
  = \chic{l}(\xi) \, \idn{a}^{-3/2}
  \sum_{x\in F^3}\eta(\xi x)\,\sigma(x)\,e_F(x^t\,S\,x, z/2).
\end{multline*}
This proves that $\lambda(-uD,\id a; \tlu) = c(D,\id a;w,P)$,
since $\chic{l}(\xi) \eta(\xi x) = w(x;\id{ab})$.

Finally, to prove the cuspidality, we let $\beta \in \SL_2(F)$ and
\[
	\theta(z) = \chic[\cond{-1}]{(-1)}(a_{\beta^{-1}}) K(\beta^{-1},z)^{-1} \,
	\tlu(\beta^{-1} z).
\]
By \eqref{eqn:shim_automorph} we have that $\theta(z) = f(z;{^{\beta}}\eta)$.
Then $\lim_{z \to i \infty} \theta(z) = P(0) \, {^{\beta}}\eta(0)$.
If $\mbk \neq \mathbf 0$, then $P(0) = 0$. Otherwise we have that $l \notin
\Fc$, and then Lemma~\ref{lem:cusp_eta} implies that ${^{\beta}}\eta(0) = 0$.
This proves that $\tlu$ vanishes at $\beta^{-1}\cdot \infty$.
\end{proof}

\bigskip

Assume for the rest of this section that $\sgn(u) = (-1)^\mbk$.
We consider the \emph{Kohnen plus subspace}, defined by
\begin{multline*}
	\Shk[+] = \Bigl\{ f \in \Shk \; :\\
		\lambda(-uD,\id a;f) = 0 \text{ unless 
	$(D,\id a)$  is a discriminant} \Bigr\}.
\end{multline*}
This agrees with the definition given by \cite{hiraga-ikeda} and \cite{su},
taking into account the different normalizations for the automorphy factor.

\begin{prop}\label{prop:cuspidality}
	Let $\tlu$ be as above. Assume that $l \notin \Fc$ or $\mbk \neq
	\mathbf 0$.
	Then $\tlu \in \Shk[+]$.
\end{prop}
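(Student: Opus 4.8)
The plan is to combine Proposition~\ref{prop:modularity} with the
vanishing of the non-discriminant Fourier coefficients of $\tlu$. By
part~(a) of Proposition~\ref{prop:modularity} we already know
$\tlu \in \Mhk$, and by part~(c) it is a cusp form under exactly the
hypothesis $l \notin \Fc$ or $\mbk \neq \mathbf 0$ that we are assuming;
so $\tlu \in \Shk$. What remains is to check that
$\lambda(-uD,\id a;\tlu) = 0$ whenever $(D,\id a)$ is \emph{not} a
discriminant, which by the identity \eqref{eqn:cfourier} is equivalent to
showing $c(D,\id a;w,P) = 0$ in that case.

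First I would unwind the definition \eqref{eqn:fourier_coeffs}: the
coefficient $c(D,\id a;w,P)$ is, up to the factor $\idn a^{-1}$, a sum of
$w(y;\id c)\,P(y)$ over $y \in \mathcal A_{\Delta,\id c}(L)$, i.e.\ over
$y \in \id c^{-1}L$ with $\disc(y) = \Delta = lD$, where $\id c = \id{ab}$.
The key local input is part~(b) of Lemma~\ref{lem:weil_casos} (equivalently
condition \eqref{eqn:wnormal} on the weight functions, together with the fact
that $w(\vardot;\id c)$ is supported on $\id c^{-1}\widehat L$ up to the
$\chic{l}$-twist): if $w(y;\id c) \neq 0$ then $\Delta(y)/l \in \hatOO$, and
moreover $\Delta(y)/l$ is a unit at each $\pv \mid \cond l$ by
\eqref{eqn:wnormal}/\eqref{eqn:wvector}. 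I would translate this, together
with the integrality $\disc(y)=lD$ and the lattice condition defining $\id b$
(so that $(l,\id b)$ is a fundamental discriminant and $\OO\oplus\id b\,\omega$
with $\disc(\omega)=l$ is the maximal order), into the statement that the pair
$(D,\id a)$ must itself be a discriminant: concretely, setting
$\omega' = y/\eta$ for a suitable generator, one checks $\OO\oplus\id a\,\omega'$
is an order in $F(\sqrt D)$ with $\disc(\omega')=D$. Hence if $(D,\id a)$ is
not a discriminant, the summation set (intersected with the support of $w$)
is empty, so $c(D,\id a;w,P)=0$.

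Therefore $\lambda(-uD,\id a;\tlu)=0$ for every non-discriminant pair
$(D,\id a)$, which is precisely the defining condition of $\Shk[+]$, and
combined with $\tlu \in \Shk$ this gives $\tlu \in \Shk[+]$.

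The main obstacle is the local-to-global bookkeeping in the middle step:
making precise, at every prime, how the support conditions on $w\p$ (via
\eqref{eqn:wnormal} and \eqref{eqn:whomog}) force the ideal $\id c = \id{ab}$
and the element $y$ to assemble into a genuine discriminant structure on
$F(\sqrt D)$, correctly matching the normalization of ``discriminant'' and
``fundamental discriminant'' fixed in the Notation summary (in particular the
role of $\id b$ as the unique ideal making $(l,\id b)$ fundamental, and the
edge case $(\xi^2,\xi^{-1}\OO)$). This is essentially the computation already
carried out for $l=1$ in \cite{waldspu}; the new content is only to track the
$\cond l$-primes, where \eqref{eqn:wvector} guarantees $w\p$ does not vanish
on the relevant vectors, so no coefficients are spuriously killed there.
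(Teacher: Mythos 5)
Your overall skeleton matches the paper's: cuspidality comes from Proposition~\ref{prop:modularity}(c) under the stated hypothesis, and the plus-space condition is reduced via \eqref{eqn:cfourier} to showing that a nonzero coefficient forces $(D,\id a)$ to be a discriminant. But the crucial middle step is where your argument has a genuine gap. From $\lambda(-uD,\id a;\tlu)\neq 0$ one gets a vector $y\in\id c^{-1}L$ with $\disc(y)=\Delta=lD$, and this $y$ generates the field $F(\sqrt{\Delta})$ inside $B$ --- \emph{not} $F(\sqrt{D})$. Your proposed ``$\omega'=y/\eta$ for a suitable generator'' cannot produce the required element: dividing by a scalar of $F$ does not change the quadratic field, and there is no obvious element of $B$ by which to divide so as to land in $F(\sqrt D)$ with the right integrality. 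What the existence of $y$ gives directly is only that $(\Delta,\id c)$ is a discriminant; passing from this to $(D,\id a)$ being a discriminant is exactly the content of Proposition~\ref{prop:discvsdisc}, whose proof is not a routine local bookkeeping exercise: in the direction needed here it first shows $(l^2D,\id b^2\id a)$ is a discriminant, hence $(D,\cond l\,\id a)$ is, and then descends to $(D,\id a)$ using the hypothesis $D\in\id a^{-2}$ (which you never invoke, and which the paper extracts from the nonvanishing of the coefficient), the oddness of $\cond l$, and \cite[Proposition~2.12]{waldspu}. Without this input your claim that the non-discriminant coefficients vanish is unproved.

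Two smaller points. First, the support properties of $w$ play no role in this step (nonemptiness of $\mathcal A_{\Delta,\id c}(L)$ already suffices), and your appeal to \eqref{eqn:wnormal}/\eqref{eqn:wvector} is in fact backwards: at $\pv\mid\cond l$ the condition \eqref{eqn:wnormal} forces $\Delta(y)\in\pi\p\OO[\pv]$ on the support, it does not make $\Delta(y)/l$ a unit there. Second, the definition of $\Shk[+]$ also requires handling $D=0$; the paper disposes of this by noting $\mathcal A_{0,\id c}(L)=\{0\}$ and $w(0;\id c)P(0)=0$, a point your write-up omits.
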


\begin{proof}

Assume that $\lambda(-uD,\id a; \tlu) \neq 0$.
Then $D \neq 0$, since $\mathcal A_{0, \id c}(L) = \{0\}$ and $w(0;\id c) \,
P(0) = 0$.
Furthermore, by \eqref{eqn:cfourier} we have that
$\mathcal{A}_{\Delta, \id c}(L) \neq \varnothing$,
which implies that $(\Delta, \id c)$ is a discriminant. Since $D \in \id
a^{-2}$, the following result implies that $(D, \id a)$ is a discriminant.
\end{proof}

\begin{prop}\label{prop:discvsdisc}

	Given $D \in F^{-l}$
    and $\id a \in \IF$ such that $D \in \id a^{-2}$,
	let $\Delta = lD$ and $\id c = \id{ab}$. Then $(D,\id a)$ is a discriminant
	if and only if $(\Delta, \id c)$ is a discriminant. 
	Furthermore, if $\cond D$ is prime to $\cond l$, then $(D,\id a)$ is
	fundamental if and only if $(\Delta,\id c)$ is fundamental.

\end{prop}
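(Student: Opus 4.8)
The plan is to work purely at the level of orders in the quadratic étale algebra $K = F(\sqrt\Delta) = F(\sqrt{D})$ (recall $\Delta = lD$ lies in the same square class as $D$ since $l$ is fixed, wait — no, $\Delta$ and $D$ differ by the factor $l$, so $K$ depends only on the square class of $D$, equivalently of $\Delta$, and these agree because $\Delta/D = l$... actually $K = F(\sqrt D)$ and $F(\sqrt\Delta) = F(\sqrt{lD})$ need not coincide). So the first point to settle is which field we are in: since a discriminant $(D,\id a)$ forces $K = F(\sqrt D)$ and a discriminant $(\Delta,\id c)$ forces $K' = F(\sqrt\Delta)$, and $\Delta = lD$, these are genuinely different fields unless $l$ is a square. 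The resolution is that the statement is about \emph{when} such discriminants exist, and one shows $(D,\id a)$ is a discriminant in $F(\sqrt D)$ iff $(\Delta,\id c)$ is a discriminant in $F(\sqrt\Delta)$ by a local argument prime by prime, matching conductors.

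First I would reduce to a local statement: $(D,\id a)$ is a discriminant iff for every prime $\pv$, the lattice $\OO[\pv] \oplus \id a\p \omega$ is a local order in $K\p$ for a suitable $\omega$ with $\disc(\omega) = D$; equivalently $v\p(D) \geq -2v\p(\id a)$ together with a congruence condition at primes above $2$ (the classical "$D \equiv \square \bmod 4\id a^2$" condition, phrased via \cite[Proposition 2.11]{waldspu} or the surrounding discriminant formalism). Since $\Delta = lD$ and $\id c = \id{ab}$ where $(l,\id b)$ is a \emph{fundamental} discriminant with $\cond l$ prime to $2\id d\id N$ and hence squarefree, one has the exact valuation relation $v\p(\Delta) = v\p(l) + v\p(D)$ and $v\p(\id c) = v\p(\id b) + v\p(\id a)$, while fundamentality of $(l,\id b)$ pins down $v\p(l) + 2v\p(\id b)$ to be $0$ at primes $\pv \nmid \cond l$ and to be $-1$ (i.e. $v\p(l) = 1$, $v\p(\id b) = -1$... or the appropriate value making $l/\pi\p$ a unit square-class generator) at $\pv \mid \cond l$. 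Plugging in, at each $\pv$ the defining inequality and congruence for $(D,\id a)$ transforms into exactly the one for $(\Delta,\id c)$: away from $\cond l$ this is immediate since we are just multiplying by a square class, and at $\pv \mid \cond l$ (which is odd and prime to $\id N$) the single extra factor of $\pi\p$ in $\Delta$ versus the shift by $\pi\p^{-1}$ in $\id c^2$ cancel, and there is no congruence obstruction since $\pv$ is odd.

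For the "furthermore" clause, I would observe that $(D,\id a)$ is fundamental iff $\OO \oplus \id a\omega = \OO[K]$, i.e. the order is maximal, which is again a purely local condition: maximal at every $\pv$. At $\pv \nmid \cond l$ the local picture for $(\Delta,\id c)$ is identical to that for $(D,\id a)$ up to a square-class twist and so maximality transfers. At $\pv \mid \cond l$: the hypothesis that $\cond D$ is prime to $\cond l$ forces $v\p(D)$ to be even with $D\p$ a unit times a square in $F\p$, so $K\p/F\p$ is split or unramified there; meanwhile $v\p(l) = 1$ means $l$ contributes ramification to $\Delta\p = l\p D\p$. But because $(l,\id b)$ is fundamental, the pair $(l,\id b)$ is already maximal at $\pv$, so tensoring the order contributions shows $(\Delta,\id c)$ is maximal at $\pv$ iff $(D,\id a)$ is. Conversely, maximality of $(\Delta,\id c)$ at $\pv\mid\cond l$ together with fundamentality of $(l,\id b)$ forces the $(D,\id a)$-part to be maximal. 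The main obstacle, and where I would spend the most care, is bookkeeping the $2$-adic congruence conditions: one must check that because $\cond l$ is prime to $2$, no prime above $2$ ever lies in $\cond l$, so the delicate mod-$4$ conditions defining discriminants and fundamental discriminants are untouched by the passage $D \leftrightarrow \Delta$, $\id a \leftrightarrow \id c$ — all the action happens at the odd primes dividing $\cond l$, where the cleaner valuation-only criterion applies.
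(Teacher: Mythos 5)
Your overall strategy (localize the discriminant condition prime by prime) is legitimate and different from the paper, which argues globally: given $\omega$ with $\disc(\omega)=D$ and $\omega'$ with $\disc(\omega')=l$ realizing the two orders, the paper constructs $\omega''=\omega\overline{\omega'}+\omega'\overline{\omega}$, checks by direct computation that $\disc(\omega'')=\Delta$ and that $\trace\omega''\in\id c^{-1}$, $\norm\omega''\in\id c^{-2}$, and for the converse applies the same construction once more to get that $(l^2D,\id b^2\id a)$, hence $(D,\cond l\id a)$, is a discriminant and then strips the odd factor $\cond l$ using \cite[Proposition 2.12]{waldspu}; the ``furthermore'' clause is handled via $\cond\Delta=\cond l\cond D$. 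However, your execution has a genuine gap, and it sits exactly where you claim there is nothing to check. Under the standing hypothesis $D\in\id a^{-2}$, the local condition for $(D,\id a)$ to be a discriminant at every \emph{odd} prime is precisely $D\in\id a_{\pv}^{-2}$ (take $\omega=\sqrt D/2$ locally), so at odd primes -- including those dividing $\cond l$ -- both sides of the equivalence hold automatically. The entire content of the proposition is at the primes above $2$, which is exactly where you assert the mod-$4$ conditions are ``untouched'' because $\cond l$ is odd. That assertion is false: $\chi^{l}$ unramified at $\pv\mid 2$ does not make $l$ a local square there, and $\id b$ need not be trivial at $2$ (only $l\id b^2=\cond l$ is controlled), so the solvability condition for $\Delta=lD$ relative to $\id c=\id a\id b$ genuinely differs from the one for $D$ relative to $\id a$.

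To repair this you must use that $(l,\id b)$ is itself a fundamental discriminant at the even primes and multiply local data: if $x\in\id a_\pv^{-1}$ satisfies $x^2\equiv D \pmod{4\id a_\pv^{-2}}$ and $z\in\id b_\pv^{-1}$ satisfies $z^2\equiv l\pmod{4\id b_\pv^{-2}}$, then $xz\in\id c_\pv^{-1}$ and $(xz)^2-lD=z^2(x^2-D)+D(z^2-l)\in 4\id c_\pv^{-2}$; this is nothing but the local shadow of the paper's $\omega''$ construction, so the step you skipped is the heart of the proof rather than a routine observation. The converse direction also needs an argument (your sketch treats it as symmetric, but going from $(\Delta,\id c)$ back to $(D,\id a)$ one must remove the contribution of $(l,\id b)$, which is what the paper's appeal to \cite[Proposition 2.12]{waldspu} accomplishes), and your treatment of the fundamentality statement (``tensoring the order contributions'') is too vague to check; a clean route is to compare conductors of the two orders using $\Delta\id c^{2}=\cond l\cdot D\id a^{2}$ and $\cond\Delta=\cond l\cond D$, as the paper does.
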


\begin{proof}

	We can assume that $l,D \notin \Fc$, since otherwise the result is trivial.

	Let $\omega' \in F(\sqrt l)$ be such that $\Delta(\omega') = l$ and $\OO
	\oplus \omega' \id b$ is an order. Assume that $(D, \id a)$ is a
	discriminant. Then there exists $\omega \in F(\sqrt D)$ such that
	$\Delta(\omega) = D$ and $\OO \oplus \omega \id a$ is an order. 
	Let $\omega'' = \omega \overline{\omega'} + \omega' \overline{\omega}$. A
	simple calculation shows that
	\[
		(\omega'')^2 = \trace(\omega) \trace(\omega') \omega''
		+ \big(4 \norm(\omega)\norm(\omega')
	- \trace(\omega')^2 \norm(\omega) - \trace(\omega)^2 \norm(\omega')\big),
   \]
   which implies that $\Delta(\omega'') = \Delta$.
   Furthermore, using that $\trace \omega \in \id a^{-1}$ and $\norm \omega \in
   \id a^{-2}$, and that $\trace \omega' \in \id b^{-1}$ and $\norm \omega' \in
   \id b^{-2}$, we see that $\trace \omega'' \in \id c^{-1}$ and $\norm \omega''
   \in \id c^{-2}$. 
   This implies that $\OO \oplus \omega'' \id c$ is an order, and
   hence that $(\Delta, \id c)$ is a discriminant. 

   If $(\Delta, \id c)$ is a discriminant, then $(l^2 D, \id b^2 \id a)$ is a
   discriminant, and hence $(D, \cond l \id a)$ is a discriminant. Since
   $D \in \id a^{-2} $ and $\cond l$ is odd, by \cite[Proposition
   2.12]{waldspu} we can conclude that $(D, \id a)$ is a discriminant.

   Finally, if $\cond D$ is prime to $\cond l$ then $\cond \Delta = \cond l
   \cond D$. Then $\cond D$ is square-free if and only if $\cond \Delta$ is
   square-free, which proves the last assertion.
\end{proof}

\section{The theta map and special points}\label{sect:special_pts}

In this section we consider the representation of $B^\times /F^\times$ into
the space $V_\mathbf{k}$ of homogeneous polynomials in $W = B/F$ of
degree $\mathbf{k}$, harmonic with respect to $\Delta$.
We denote the corresponding spaces of quaternionic modular forms by 
$\mathcal{M}_\mathbf{k}(R)$, etc.

For each $x \in \hatx B$ let $L_x \subseteq W$ be the lattice given by
$L_x = R_x / \OO$, and denote $L = L_1$.
Let $\id N$ be the level of $L$. Note that $L_x$ is in the same genus
as $L$, hence it also has level $\id N$ for every $x$.
As in Sect.~\ref{sect:half_int}, we fix $l \in F^\times$ and a nonzero weight
function $w$ on $\widehat L$ defined in terms of local weight functions on $L\p$ for
$\pv \mid \cond l$.
For each $x \in \hatx B$ we consider the weight function on $\widehat{L_x}$
given by
\begin{align*}
	w_x :  \What & \longrightarrow \{0,1,-1\} \\
             y & \longmapsto \chic{l} (\norm x) \, w(x y x^{-1}).
\end{align*}
Then by \eqref{eqn:wtransp} we have that
\begin{equation}\label{eqn:transf_wx}
 w_{z x \gamma}(\gamma^{-1} y \gamma) = w_x(y)
 \qquad \forall\, z \in \hatx R, \, \gamma \in B^\times.
\end{equation}
For $P \in V_\mathbf{k}$ we consider the theta series $\tlu_{x,P} =
\tlu(\vardot;w_x,P)$ given
by \eqref{eqn:theta}.
Using \eqref{eqn:transf_wx} we get that
\begin{equation}
	\tlu_{z x\gamma,P\cdot \gamma} = \tlu_{x,P}\,, \qquad\quad
	\forall\, z \in \hatx R,\, \gamma\in B^\times. \label{eqn:theta_transf}
\end{equation}

The theta series $\tlu_{x,P}$ define a linear map
$\tlu : \mathcal{M}_\mathbf{k}(R) \to \Mhk$
given by
\[
 \tlu(\varphi) = \sum_{x\in \cl(R)} \tfrac{1}{t_x} 
\tlu_{x,\varphi(x)}\,.
\]
This map is well defined by \eqref{eqn:phi=sum_phix} and 
\eqref{eqn:theta_transf}, and satisfies that $\tlu(\varphi_{x,P}) = 
\tlu_{x,P}$ for every $x \in \hatx B$ and $P \in V_\mathbf{k}$.

\medskip

We will now prove the Hecke-linearity of this construction.
We normalize the $\id p$-th Hecke operator acting on $\Mhk$ defined
in \cite{shim-hh}, multiplying it by $\idn p$.
The following auxiliary result is similar to \cite[Lemma 4.8]{yo}. 

\begin{lemma}
	Let $\id p$ be a prime ideal and let $x \in \hatx B$. Then $L_{hx} \subseteq
	\id p^{-1} L_x$ for every $h \in H\subp$. Furthermore, if $\id p\nmid 2 \id N$,
	for every $\id c \in \IF$ and $y\in (\id{pc})^{-1} L_x$ such that $\Delta(y)
	\in \id c^{-2}$, we have that
\[
    \numset{h\in \hatx{R}\backslash H\subp \;:\;
    y\in \id c^{-1}L_{hx} }
= \begin{cases}
1+\norm(\id p),& y\in\id p \id c^{-1}L_x,\\
1+\kro{\Delta(y) \zeta\subp^2}{\id p},& 
y\in \id c^{-1}L_x \backslash\id p \id c^{-1}L_x,\\
1,& y\in (\id{pc})^{-1} L_x \backslash \id c^{-1}L_x.
\end{cases}
\]
Here $\zeta\subp \in F\subp^\times$ is such that $c\subp = \zeta\subp \OO[\id
p]$.
\end{lemma}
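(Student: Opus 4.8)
The plan is to reduce everything to a local count at $\id p$ and then carry out that count using the orbit structure of the lattices $L_{hx}$ as $h$ runs over representatives of $\hatx R \backslash H\subp$. First I would observe that the condition $L_{hx}\subseteq \id p^{-1}L_x$ is purely local at $\id p$, since away from $\id p$ we have $\widehat R_{hx} = \widehat R_x$ (because $h_\id q \in \widehat R_\id q^\times$ for $\id q \neq \id p$, using $\hatOO\norm(h)\cap\OO = \id p$). Locally at $\id p$, the elements $h\subp$ with $\hatOO\norm(h)\cap\OO = \id p$ satisfy $\norm(h\subp)\OO[\id p] = \pi\subp\OO[\id p]$, so $R_{hx}\subp = h\subp^{-1}R_x\subp h\subp$ has reduced discriminant scaled by a unit times $\pi\subp^2$ relative to... more precisely, since $\id p\nmid 2\id N$, the local order $R_x\subp$ is a maximal (hence Eichler) order in $M_2(F\subp)$, and the double cosets $\widehat R\subp^\times \backslash H\subp\subp / \{1\}$ are indexed by the $1+\norm(\id p)$ sublattices of index $\id p$ in the standard lattice together with... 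I would use the standard fact that $H\subp$ modulo $\hatx R$ on the left has exactly $1+\norm(\id p)$ elements, corresponding to the $\id p+1$ lines in $\mathbb{P}^1(\OO[\id p]/\id p)$, and that $L_{h_ix}\subp$ runs over the $1+\norm(\id p)$ lattices of the form (trace-zero part of) these conjugate Eichler orders.

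Next I would set up the count itself. Fix $y\in(\id{pc})^{-1}L_x$ with $\Delta(y)\in\id c^{-2}$, and write $y' = \zeta\subp y\in \id p^{-1}L_x\subp$ (locally), so that $\Delta(y')\in\OO[\id p]$ and $y\in\id c^{-1}L_{hx}$ iff $y'\in L_{hx}\subp$. The three cases in the statement correspond exactly to the $\id p$-adic valuation of $y'$ relative to the lattice $L_x\subp$: either $y'\in\id p L_x\subp$, or $y'\in L_x\subp\setminus\id p L_x\subp$, or $y'\in\id p^{-1}L_x\subp\setminus L_x\subp$. In the first case $y'$ lies in every one of the $1+\norm(\id p)$ conjugate lattices (since they all contain $\id p L_x\subp$ and are contained in $\id p^{-1}L_x\subp$), giving $1+\norm(\id p)$. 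In the third case, $y'$ is ``as large as possible'', and one checks it lies in exactly one of them (the count should reduce to noting that the $1+\norm(\id p)$ lattices pairwise intersect in $\id pL_x\subp + (\text{something of corank }1)$ and their union is $\id p^{-1}L_x\subp$ only in a controlled way). The middle case is the interesting one: here $y'$ has unit ``length'' modulo $\id p$, and whether $y'\in L_{h_ix}\subp$ depends on whether the image of $y'$ in $L_x\subp/\id p L_x\subp$ — a nonzero vector in a $3$-dimensional $\mathbb{F}_{\id p}$-quadratic space with form $\overline{\Delta}$ — lies in the corresponding hyperplane; the number of the $1+\norm(\id p)$ hyperplanes (arising from the lines of $\mathbb{P}^1$) containing a fixed anisotropic-or-isotropic vector is governed by whether $\overline{\Delta(y')}$ is a square, giving $1+\kro{\Delta(y)\zeta\subp^2}{\id p}$ after unwinding $\Delta(y') = \Delta(y)\zeta\subp^2$.

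The main obstacle will be the bookkeeping in the middle case: matching the $1+\norm(\id p)$ sublattices of index $\id p$ in the $2$-dimensional picture (lines in $\mathbb{P}^1$) with hyperplanes in the $3$-dimensional trace-zero quadratic space $L_x\subp/\id pL_x\subp$, and showing that the number of these hyperplanes through a given vector $\bar v$ equals $1+\kro{\Delta(\bar v)}{\id p}$ (with the convention that the Legendre symbol of a non-square-class or of $0$ is interpreted correctly — $\kro{0}{\id p}=0$ would give $1$, matching the ``$y'$ isotropic mod $\id p$'' subcase, which needs checking). I expect this to come down to a clean computation in $M_2(\mathbb{F}_{\id p})$: the Eichler order of level $\id p$ inside $M_2(\OO[\id p])$ has trace-zero part whose reduction is the space of trace-zero matrices, and conjugating by the $\id p+1$ representatives permutes a pencil of hyperplanes in there; a fixed trace-zero matrix $\bar v$ with $\det = -\overline{\Delta(\bar v)}/4$ lies in $k$ of them where $k$ is $2$, $0$, or $1$ according as $\bar v$ is diagonalizable with distinct eigenvalues, has no eigenvalue in $\mathbb{F}_{\id p}$, or is non-semisimple — i.e. exactly $1+\kro{-\det\bar v}{\id p} = 1 + \kro{\Delta(y)\zeta\subp^2}{\id p}$. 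I would cite \cite[Lemma 4.8]{yo} or \cite[Lemma 2.5]{tornatesis} for the pieces of this that are already in the literature and fill in only the quadratic-space count.
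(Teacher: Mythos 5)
The paper does not actually prove this lemma: it is stated as an auxiliary result ``similar to [Sirolli, Lemma 4.8]'' and the proof is omitted, so there is no in-text argument to compare yours with step by step. That said, your local strategy is the expected one and it does work. Away from $\id p$ nothing changes, so membership $y\in\id c^{-1}L_{hx}$ is a condition at $\id p$ only; for $\id p\nmid 2\id N$ one has $R_{x,\id p}\cong M_2(\OO[\id p])$ and $W_{\id p}$ identified with trace-zero matrices, the cosets $\hatx R\backslash H_{\id p}$ correspond to the $1+\norm(\id p)$ index-$\id p$ sublattices $L'\subseteq\OO[\id p]^2$ (lines in $(\OO[\id p]/\id p)^2$), and $y\in\id c^{-1}L_{hx}$ becomes the condition that $y'=\zeta_{\id p}y$ stabilizes the corresponding $L'$. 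Counting invariant lines of the reduction of $y'$ (a nonzero trace-zero matrix) gives $2$, $0$ or $1$ according to the square class of $-\det\equiv\Delta(y)\zeta_{\id p}^2/4$, which is your middle case; the first case follows from $\id pL_x\subseteq L_{hx}\subseteq\id p^{-1}L_x$; and your hyperplane picture in $L_x/\id pL_x$ and the invariant-line picture are the same count, as your last paragraph already observes.

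Two points need tightening. First, the containment $L_{hx}\subseteq\id p^{-1}L_x$ is asserted for \emph{every} prime $\id p$, so it cannot be derived from maximality of $R_{x,\id p}$; the clean argument is simply that $h,\bar h\in\widehat R$ gives $h^{-1}\widehat R\,h=\norm(h)^{-1}\,\bar h\widehat R\,h\subseteq\norm(h)^{-1}\widehat R$, which localizes to the claim (with equality away from $\id p$). Second, your third case is precisely where the hypothesis $\Delta(y)\in\id c^{-2}$ enters, and your sketch (``pairwise intersections \dots in a controlled way'') glosses over it: writing $y''=\pi_{\id p}\zeta_{\id p}y$, the reduction of $y''$ is a nonzero nilpotent matrix, so the only candidate sublattice is the preimage of its kernel ($=$ image); checking that this candidate actually satisfies $y''L'\subseteq\pi_{\id p}L'$ — so that the count is $1$ and not $0$ — uses $v_{\id p}(\det y'')\ge 2$, i.e.\ exactly the integrality $\Delta(y)\in\id c^{-2}$. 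With these two repairs (and the observation that $\sharp(\hatx R\backslash H_{\id p})=1+\norm(\id p)$ uses $\id p\nmid\id N$), your argument is complete and is, in substance, the argument behind the cited Lemma 4.8.
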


\begin{prop}\label{prop:hecke_linear}
	The map $\tlu$ is Hecke-linear on prime ideals $\id p$ such that 
	$\id p \nmid 2\id N \cond l$.
\end{prop}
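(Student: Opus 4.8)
The plan is to verify Hecke-linearity prime-by-prime, showing that $\tlu \circ \Tg = T_{\id p} \circ \tlu$ on the generators $\varphi_{x,P}$ of $\mathcal{M}_{\mathbf k}(R)$; by linearity and \eqref{eqn:phi=sum_phix} this suffices. On the quaternionic side, Proposition~\ref{prop:hecke_on_phi} gives $\Tg\varphi_{x,P} = \sum_{h\in H\subp/\hatx R} \varphi_{h^{-1}x,P}$, so $\tlu(\Tg\varphi_{x,P}) = \sum_{h} \tlu_{h^{-1}x,P}$. Unwinding the definition \eqref{eqn:theta}, this is a sum over $h \in H\subp/\hatx R$ and $y \in \id b^{-1}L_{h^{-1}x}$ of $w_{h^{-1}x}(y;\id b)\,P(y)\,e_F(-u\disc(y)/l, z/2)$. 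Replacing the index $h$ by $h^{-1}$ (so $h$ now ranges over $\hatx R \backslash H\subp$) and collecting terms by the value of $\disc(y) = \Delta = lD$, I want to rewrite this as a sum over $D \in F^{-l}\cup\{0\}$ of a coefficient times $e_F(-uD, z/2)$, and identify that coefficient.

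The key computation is then to compare, for each fixed $D$ (with $\Delta = lD$, $\id c = \id{ab}$ as usual but here $\id a = \OO$ so $\id c = \id b$), the coefficient
\[
\sum_{h \in \hatx R \backslash H\subp} \;\sum_{\substack{y \in \id b^{-1}L_{hx}\\ \disc(y)=\Delta}} w_{hx}(y;\id b)\,P(y)
\]
against $\idn p$ times the $\id p$-th Hecke coefficient of $\tlu_{x,P}$. By Shimura's formula for the $\id p$-th Hecke operator on half-integral weight forms (as normalized here, multiplied by $\idn p$), the latter is a combination of Fourier coefficients $c(D', \id a';w_x,P)$ of $\tlu_{x,P}$ at related indices — roughly $c(\Delta\id p^{-2}\text{-type term}, \dots)$, $\kro{\cdots}{\id p}\,c(D,\dots)$, and $\norm(\id p)\,c(\Delta\id p^{2}\text{-type term},\dots)$. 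The crucial input is the displayed Lemma immediately preceding the Proposition: it says $L_{hx} \subseteq \id p^{-1}L_x$ for $h \in H\subp$, and, for $\id p \nmid 2\id N$, it counts $\numset{h \in \hatx R\backslash H\subp : y \in \id c^{-1}L_{hx}}$ for a given $y \in (\id{pc})^{-1}L_x$ with $\disc(y) \in \id c^{-2}$, with the three cases matching exactly the three coefficients appearing in Shimura's Hecke formula. So the strategy is: interchange the order of summation in the displayed double sum to sum first over $y \in (\id{pb})^{-1}L_x$ with $\disc(y) = \Delta$ and then over the $h$'s with $y \in \id b^{-1}L_{hx}$; apply the counting Lemma; and match the resulting three-term expression against Shimura's formula.

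Two technical points need care and account for the hypothesis $\id p \nmid 2\id N\cond l$. First, the weight function: since $\id p \nmid \cond l$, the local weight function at $\id p$ is trivial, and $w_{hx}(y;\id b) = w_x(y;\id b)$ whenever both make sense — more precisely one needs $w_{hx}$ and $w_x$ to agree on the relevant lattice, which follows because conjugating $L_x$ to $L_{hx}$ does not change the behavior at primes dividing $\cond l$ (all of which are coprime to $\id p$), together with \eqref{eqn:transf_wx} and \eqref{eqn:wsegda} to track the ideal-scaling of the weight function. Second, the condition $\disc(y) \in \id c^{-2}$ required by the Lemma holds automatically here since $y$ contributes only when $\disc(y) = \Delta = lD$ with $D \in \id b^{-2}$ forced by $(\Delta,\id b)$ being a discriminant via Proposition~\ref{prop:discvsdisc}. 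The main obstacle I expect is purely bookkeeping: matching the precise normalization of Shimura's $\id p$-th Hecke operator (the factor $\idn p$, the genus-character Legendre symbol $\kro{\Delta \zeta\subp^2}{\id p}$, and the shifts $D \mapsto D\id p^{\pm 2}$ realized at the level of ideals $\id a \mapsto \id p^{\mp 1}\id a$) against the three cases of the counting Lemma, and checking that the weight-function values and the polynomial values $P(y)$ carry through each case unchanged. Once the dictionary between "position of $y$ relative to $\id p L_x$" and "which Hecke term" is pinned down, the identity falls out term by term.
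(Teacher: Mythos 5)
Your strategy is the same as the paper's: reduce to the forms $\varphi_{x,P}$ via \eqref{eqn:phi=sum_phix}, use Proposition~\ref{prop:hecke_on_phi} and the reindexing of $H\subp/\hatx{R}$ by $\hatx{R}\backslash H\subp$ to write $\tlu(T\subp\varphi_{x,P})=\sum_{h\in\hatx{R}\backslash H\subp}\tlu_{hx,P}$, interchange the sums over $h$ and $y$, apply the counting lemma, and match against Shimura's three-term formula (with the $\idn{p}$ normalization). The genuine gap is that you carry out the coefficient comparison only at $\id a=\OO$. That is indeed all that ``unwinding the definition \eqref{eqn:theta}'' can give, since the visible $q$-expansion of $\tlu_{x,P}$ only exhibits the coefficients $\lambda(-uD,\OO;\cdot)$; but two forms in $\Mhk$ with the same coefficients at $\id a=\OO$ need not coincide when $\hF>1$, because the coefficients with $\id a$ in a fixed ideal class determine only one of the components of the adelic form. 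Moreover, Shimura's formula for $\lambda(-uD,\id a;T\subp f)$ already involves $\lambda(-uD,\id p^{\pm1}\id a;f)$, so even your check at $\id a=\OO$ forces you to invoke the coefficient formula \eqref{eqn:cfourier} of Proposition~\ref{prop:modularity} at the nontrivial ideals $\id p^{\pm1}$. Once that formula is in play nothing is gained by specializing: the computation must be run, exactly as in the paper, for arbitrary $\id a\in\IF$ with $\id c=\id{ab}$, and then it proves equality of all Fourier coefficients, hence of the forms.

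A smaller but load-bearing imprecision: it is not true that $w_{hx}(\vardot;\id c)$ and $w_x(\vardot;\id c)$ agree where both are defined. The identity coming from \eqref{eqn:wtransp} is $w_{hx}(y;\id c)=w_x(y;\id{pc})$ for $y\in\id c^{-1}L_{hx}$, which on $\id c^{-1}L_x$ differs from $w_x(y;\id c)$ by the factor $\chic[*]{l}(\id p)$, by \eqref{eqn:wsegda}. This factor is precisely what converts the symbol $\kro{\Delta\zeta\subp^2}{\id p}$ produced by the counting lemma into $\chic[*]{l}(\id p)\kro{D\xi\subp^2}{\id p}$, and then, after inserting $\chic[*]{-u}(\id p)$, into the symbol $\kro{-uD\xi\subp^2}{\id p}$ attached to the nebentypus $\chic{-u}$ in Shimura's formula; one also needs the observation that $\kro{\Delta\zeta\subp^2}{\id p}\,c(D,\id p^{-1}\id a;w_x,P)=0$ to clean up the cross terms. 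Deferring all of this to bookkeeping is acceptable in a sketch, but the equality of weight functions as you stated it would propagate a wrong character through that matching, so the $\chic[*]{l}(\id p)$ twist should be made explicit.
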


\begin{proof}
Let $x\in\hatx B$, $P\in V_{\mathbf{k}}$, and denote
$f = \tlu_{x,P}$ and $f' = \tlu(T\subp(\varphi_{x,P}))$.
Let $\id p$ be a prime ideal.
There is a bijection between $H\subp/\hatx{R}$ and $\hatx{R}\backslash H\subp$
given by $k \mapsto h = \pip k^{-1}$, under which we have
$L_{k^{-1}x} = L_{hx}$. Using this and Proposition~\ref{prop:hecke_on_phi},
\[
f' =
\sum_{k \in H\subp/\hatx{R}}\tlu_{k^{-1}x, P}
  = \sum_{h\in \hatx{R}\backslash H\subp}\tlu_{hx, P}.
\]

Let $D\in F^{-l} \cup \{0\}$ and $\id a \in \IF$.
Let $\Delta = lD$ and $\id c = \id {ab}$.
By $\eqref{eqn:wtransp}$ we have that
$w_{hx}(y; \id c) = w_x(y; \id{pc})$
for every $h \in H\p$ and $y \in \id c^{-1} L_{hx}$.
Using this, \eqref{eqn:fourier_coeffs} and \eqref{eqn:cfourier} we have that
\begin{align*}
 \lambda(-uD, \id a; f')&=\sum_{h\in \hatx{R}\backslash H\subp}
 c(D, \id a;w_{hx},P)\\
 &=\frac1{\norm(\id a)}\sum_{h\in \hatx{R}\backslash H\subp}
 \sum_{y\in\mathcal{A}_{\Delta, \id c}(L_{hx})} w_{hx}(y;\id c)\, P(y)\\
 &=\frac{1}{\norm(\id a)}\sum_{(h, y)\in\Lambda} w_x(y; \id{pc}) \, P(y) \,
\end{align*}
where $\Lambda = \left\{(h, y)\in \left(\hatx{R}\backslash H\subp \right)
	\times W \; : \; y\in \mathcal A_{\Delta, \id c}(L_{hx})\right\}$. 

We consider the decomposition $\Lambda = \Lambda_1\cup\Lambda_2\cup\Lambda_3$,
where
\begin{align*}
\Lambda_1 &= \{(h, y)\in\Lambda\; : \;y\in\id p\id c^{-1}L_x\},\\
\Lambda_2 &= \{(h, y)\in\Lambda\; : \;y\in \id c^{-1}L_x\backslash
\id p\id c^{-1}L_x\},\\
\Lambda_3 &= \{(h, y)\in\Lambda\; : \;y\in(\id{pc})^{-1}L_x\backslash
\id c^{-1}L_x\}.
\end{align*}
Using \eqref{eqn:wsegda}, \eqref{eqn:fourier_coeffs} and the previous lemma
we see that
\begin{align*}
	\frac{1}{\norm(\id a)} & \sum_{(h,y)\in\Lambda_i}
	w_{x}(y;\id{pc}) \, P(y) = \\
	 & \begin{cases}
	(1+\norm(\id p))\frac{c(D,\id p^{-1}\id a;w_x,P)}{\norm(\id p)}, & i = 1,\\
	\left(1+\kro{\Delta \zeta\subp^2}{\id p}\right)
	\left(\chic[*]{l}(\id p) c(D,\id a;w_x,P) - 
	\frac{c(D,\id p^{-1}\id a;w_x,P)}{\norm(\id p)}\right), & i = 2,\\
	\norm(\id p) c(D,\id p\id a;w_x,P) - \chic[*]{l}(\id p) c(D,\id a;w_x,P), & i = 3.
	\end{cases}
\end{align*}
We observe that $\kro{\Delta \zeta\subp^2}{\id p} c(D,\id p^{-1}\id a;w_x,P) = 0$,
because if $c(D,\id p^{-1}\id a;w_x,P) \neq 0$ then there exists
$y\in \mathcal A_{\Delta, \id p^{-1}\id c}(L_x)$, which implies that $\id p\mid
\Delta(y)\zeta\subp^2 =\Delta\zeta\subp^2$.
Furthermore, if $\xi\subp \in F\subp^\times$ is such that $\id a\subp =
\xi\subp \OO[\id p]$, we have that
$\kro{\Delta \zeta\subp^2}{\id p} = \chic[*]{l}(\id p) \kro{D\xi\subp^2}{\id p}$.
Using this, \eqref{eqn:cfourier} and adding up we obtain that
\begin{multline*}
 \lambda(-uD, \id a; f')
  = \lambda(-uD,\id p^{-1}\id a;f) \\
  + \kro{D\xi\subp^2}{\id p}\lambda(-uD,\id a;f)
 + \norm(\id p)\lambda(-uD,\id p\id a;f) \,.
\end{multline*}
Writing $\kro{D\xi\subp^2}{\id p} = \chic[*]{-u}(\id p)
\kro{-uD\xi\subp^2}{\id p}$ and using
\cite[Proposition 5.4]{shim-hh} (and recalling our normalization), we see that
$\lambda(-uD,\id a;f') = \lambda(-uD,\id a; T\subp(f))$, which completes the
proof.
\end{proof}
 
\subsection*{Special points}

Let $D \in F^{-l}$ and $\id a \in \IF$ be such that $(D,\id a)$ is a
discriminant.
Consider the discriminant $(\Delta, \id c) = (lD, \id{ab})$, let $K = F(\sqrt
\Delta)$ and let $\omega \in K$ be such that $\Delta(\omega) = \Delta$ and
$\OO[\Delta,\id c] = \OO \oplus \omega \id c$ is an order in $K$.

Furthermore, assume that there exists an embedding $\OO[\Delta,\id c]
\hookrightarrow R$. Then we can consider the set $\widetilde X_{\Delta,\id{c}} =
\{x\in \hatx B\,:\, \OO[\Delta,\id{c}] \subseteq R_x\}$, and define a set
$X_{\Delta,\id{c}}$ of \emph{special points} associated to the discriminant
$(\Delta,\id c)$ by
\[
 X_{\Delta, \id{c}} = \hatx R\backslash\widetilde X_{\Delta, \id{c}} /K^\times.
\]

We let $P_\Delta \in V_\mathbf{k}$ be the polynomial characterized by the property
\begin{equation}\label{eqn:gegenbauer}
  w(\omega;\id c) \, P (\omega)
  = \ll{P, P_\Delta} \qquad \forall \, P\in V_\mathbf{k}\,,
\end{equation}
Note that $\omega \in K/F$ is uniquely determined up to sign; assuming $\sgn(l)
= (-1)^\mbk$ (see Hypothesis \ref{hyp:peso_signo} below) we have that
$w(\vardot;\id c) \, P(\vardot)$ is an even function, and hence $P_\Delta$ does
not depend on $\omega$.
Since $(P \cdot a) (\omega) = P(\omega)$ for every $a\in K^\times$, we have that
\begin{equation}\label{eqn:P_D-Kinv}
 P_\Delta \cdot a = P_\Delta
 \qquad \forall \, a \in K^\times / F^\times. 
\end{equation}

\begin{lemma}\label{lem:womeganeq0}
  
	If $\cond D$ is prime to $\cond l$, we have that $w(\omega;\id c) \neq 0$.

\end{lemma}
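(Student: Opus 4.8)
The statement asserts $w(\omega;\id c)\neq 0$ under the hypothesis that $\cond D$ is prime to $\cond l$. Recall that $w(\vardot;\id c)$ is defined on $\id c^{-1}\widehat L$ via $w(y;\id c)=\chic{l}(\zeta)\,w(\zeta y)$ for $\zeta\in\hatx F$ with $\zeta\hatOO\cap F=\id c$, and that $w(\zeta y)=\prod_{\pv\mid\cond l}w\p((\zeta y)\p)$. Since $w_\pv$ is supported on $L\p$ and is $\pi\p L\p$-periodic, and using the local characterization \eqref{eqn:wvector}, at each $\pv\mid\cond l$ we have $w\p(z)\neq 0$ precisely when $z\in L\p\setminus\pi\p L\p$ and $\Delta(z)\in\pi\p\OO[\pv]$. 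So the whole question is \textbf{local at the primes dividing $\cond l$}: I must show that for each such $\pv$, the image of $\zeta\p\omega\p$ in $L\p/\pi\p L\p$ is nonzero and has $\Delta(\zeta\p\omega)\in\pi\p\OO[\pv]$.

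The first step is to translate the order condition into a local lattice condition. We have $\OO[\Delta,\id c]=\OO\oplus\omega\id c$ is an order in $K=F(\sqrt\Delta)$, and by hypothesis there is an embedding $\OO[\Delta,\id c]\hookrightarrow R$; fixing such an embedding identifies $\omega$ with an element of $W=B/F$ lying in $L=R/\OO$, with $\omega\id c\subseteq L$ locally, i.e. $\zeta\p\omega\in L\p$ for each $\pv$. This gives $\zeta\p\omega\p\in L\p$, so we need: (i) $\Delta(\zeta\p\omega)\in\pi\p\OO[\pv]$, and (ii) $\zeta\p\omega\notin\pi\p L\p$.

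For (i): $\Delta(\zeta\p\omega)=\zeta\p^2\Delta(\omega)=\zeta\p^2\Delta$. Since $\Delta=lD$ and $\zeta\p\OO[\pv]=\id c\p=\id a\p\id b\p$, and $\cond l$ is square-free with $\pv\mid\cond l$, we have $v\p(l)=1$; combined with $v\p(\cond D)=0$ (as $\cond D$ is prime to $\cond l$, hence $v\p(D)$ is even, in fact $D$ is a $\pv$-adic square times a unit — more precisely $\chic[\pv]{D}$ is unramified), one computes $v\p(\zeta\p^2\Delta)=2v\p(\id c)+v\p(l)+v\p(D)$. The point is that $(\Delta,\id c)$ being a discriminant forces $v\p(\Delta)+2v\p(\id c)\geq 0$ with the right parity, and the contribution $v\p(l)=1$ is odd; a short valuation bookkeeping (using that $\id b\p$ is the local part of the conductor-$l$ fundamental discriminant, so $2v\p(\id b)+v\p(l)=1$, i.e. $v\p(\id b)=0$) gives $v\p(\zeta\p^2\Delta)\geq 1$, which is exactly $\Delta(\zeta\p\omega)\in\pi\p\OO[\pv]$. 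For (ii): if $\zeta\p\omega\in\pi\p L\p$, then $\Delta(\zeta\p\omega)\in\pi\p^2\OO[\pv]$ (since $\Delta(L\p)\subseteq\OO[\pv]$ gives $\Delta(\pi\p L\p)\subseteq\pi\p^2\OO[\pv]$), i.e. $v\p(\zeta\p^2\Delta)\geq 2$; but the parity computation above shows $v\p(\zeta\p^2\Delta)$ is \emph{odd}, hence equals exactly $1$, contradiction. Therefore $\zeta\p\omega\notin\pi\p L\p$, and by \eqref{eqn:wvector} we get $w\p(\zeta\p\omega\p)\neq 0$.

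\textbf{Main obstacle.} The delicate point is the valuation computation showing $v\p(\zeta\p^2\Delta)=1$ exactly. This requires knowing the precise local behaviour of $\id b$ at $\pv\mid\cond l$ — namely that $v\p(\id b)=0$ — which should follow from \cite[Proposition 2.11]{waldspu} (characterizing the fundamental discriminant $(l,\id b)$) together with the fact that $\pv$ is ramified in $F(\sqrt l)/F$; and it requires that $v\p(D)=0$, or at least even, which is where the coprimality hypothesis $(\cond D,\cond l)=1$ enters. One must also handle the \emph{even} primes carefully, but since $\cond l$ is prime to $2$ by assumption, all $\pv\mid\cond l$ are odd, so \eqref{eqn:wvector} and the local theory of \cite[Sect.~4]{tornatesis} apply without modification. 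I expect the proof to be about half a page once the local valuation arithmetic is written out carefully.
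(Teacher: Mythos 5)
Your strategy is the same as the paper's: work prime by prime at $\pv\mid\cond{l}$, show that $\Delta(\xi\p\omega)$ has $\pv$-valuation exactly $1$, deduce $\xi\p\omega\notin\pi\p L\p$, and invoke \eqref{eqn:wvector}. The gap is in how you establish the exactness. Parity is not enough: if $\xi\p\omega\in\pi\p L\p$ you only get $v\p(\Delta(\xi\p\omega))\geq 2$, and ``odd'' is perfectly compatible with that (the valuation could be $3$), so your step ``odd, hence equals exactly $1$, contradiction'' is a non sequitur. What is needed, and what the paper does in one line, is the exact local identity
\[
\Delta(\xi\p\omega)\,\OO[\pv]=\Delta\,\id c^2\,\OO[\pv]
=(l\,\id b^2)(D\,\id a^2)\,\OO[\pv]
=\cond{l}\cond{D}\,\OO[\pv]=\pi\p\OO[\pv],
\]
using $l\,\id b^2=\cond{l}$ (fundamentality of $(l,\id b)$), square-freeness of $\cond{l}$, coprimality of $\cond{D}$ with $\cond{l}$, and the fact that $D\,\id a^2$ is a unit at $\pv$; from valuation exactly $1$ both your conditions (i) and (ii) follow at once. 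Your bookkeeping never controls $v\p(\id a)$: with $\id c=\id a\id b$ one has $v\p(\xi\p^2\Delta)=1+2v\p(\id a)+v\p(D)$, and knowing only that $v\p(D)$ is zero or even does not force this to equal $1$. For a non-fundamental discriminant $(D,\id a)$ the term $2v\p(\id a)+v\p(D)$ can be positive even when $\cond{D}$ is prime to $\cond{l}$, so the exactness --- which is the whole content of the lemma --- is left unproved; the identity $D\,\id a^2\,\OO[\pv]=\cond{D}\,\OO[\pv]$ is what has to be supplied (it is clear in the fundamental case, which is where the lemma is applied).

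Two smaller inaccuracies: $v\p(l)=1$ and $v\p(\id b)=0$ are not individually guaranteed; fundamentality of $(l,\id b)$ only gives $v\p(l)+2v\p(\id b)=1$, and for instance $v\p(l)=3$, $v\p(\id b)=-1$ can occur. This is harmless, since only the combination $v\p(l\,\id b^2)=1$ enters, but it is another sign that the valuation bookkeeping should be run through the ideal identities ($l\,\id b^2=\cond{l}$, $D\,\id a^2$ prime to $\pv$) rather than through valuations of $l$, $D$, $\id a$, $\id b$ separately. Once the exact valuation is in place, the remainder of your argument (membership $\xi\p\omega\in L\p$ from the embedding, exclusion of $\pi\p L\p$, and \eqref{eqn:wvector} at each $\pv\mid\cond{l}$) is exactly the paper's proof.
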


\begin{proof}

Let $\xi \in \hatx F$ be such that $\xi \hatOO \cap F = \id c$. Let $\pv
\mid \cond l$. Then
\[
  \Delta(\xi\p \omega) \OO[\pv]
  = \id c^2 \Delta \OO[\pv] = \cond l \cond D \OO[\pv]
  = \cond l \OO[\pv] \,.
\]
In particular, since $\cond l$ is square-free we have that $\xi\p \omega \notin
\cond l L\p$. Hence by \eqref{eqn:wvector} we have that $w(\omega;\id c) =
\chic{l}(\xi) \prod_{\pv \mid \cond l} w\p(\xi\p \omega)$ is not zero.
\end{proof}

For the rest of this section assume that $\cond D$ is prime to $\cond l$.
We let
\begin{equation}\notag
 \eta\ssl_{D, \id{a}} = \tfrac1{w(\omega;\id c)}
 \sum_{x \in X_{\Delta, \id{c}}} 
 \frac{w_x(\omega;\id c)}{[\OOx[x]:\OOx]}\:
 \varphi_{x,P_{\Delta}} \qquad \in \mathcal{M}_\mathbf{k}(R)\,,
\end{equation}
where $\OO[x] = R_x \cap K$. This is well defined by \eqref{eqn:transf_phi2}, 
\eqref{eqn:transf_wx} and \eqref{eqn:P_D-Kinv}. 
When $(D,\id a)$ is fundamental we have that 
\begin{equation}\label{eqn:eta_fund}
\eta\ssl_{D,\id a} 
 = \tfrac1{w(\omega;\id c) \, t_K}
 \sum_{x \in X_{\Delta, \id{c}}} w_x(\omega;\id c) \varphi_{x,P_{\Delta}},
\end{equation}
because, since $(\Delta, \id c)$ is fundamental, we have that $\OO[x]=\OO[K]$
for every $x\in X_{\Delta,\id{c}}$.
It can be proved that $\eta\ssl_{D, \id a}$ does not depend on the choice of 
the embedding $\OO[\Delta, \id c] \hookrightarrow R$. 

Finally, if $(D,\id a)$ is not a discriminant or if there does not exist an
embedding $\OO[\Delta, \id c] \hookrightarrow  R$ we let $\eta\ssl_{D,\id a}=0$.
 
\begin{prop}\label{prop:coef_serie_theta}
Let $\varphi\in \mathcal M_\mathbf{k}(R)$.
Let $D\in F^{-l}$ and let $\id a \in \IF$. 
Then
\begin{equation}\label{eqn:fourier=height}
 \lambda(-uD,\id a;\tlu(\varphi)) =
 \frac1{\idn a}\,\ll{\varphi, \eta\ssl_{D,\id a}}\,.
\end{equation}
\end{prop}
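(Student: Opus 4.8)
The plan is to reduce the identity to the case $\varphi = \varphi_{x,P}$ with $P \in V_{\mathbf k}$, since both sides are linear in $\varphi$ and by \eqref{eqn:phi=sum_phix} every $\varphi$ is a combination of such forms. For $\varphi = \varphi_{x,P}$ the left-hand side becomes $\lambda(-uD,\id a;\tlu_{x,P})$, which by \eqref{eqn:cfourier} and \eqref{eqn:fourier_coeffs} equals $\idn a^{-1}\sum_{y \in \mathcal A_{\Delta,\id c}(L_x)} w_x(y;\id c)\,P(y)$, where $\Delta = lD$ and $\id c = \id{ab}$; here I use $w_x$ in place of $w$ because $\varphi_{x,P}$ corresponds via $\tlu$ to the theta series built from the weight function $w_x$ on $\widehat{L_x}$. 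So the goal is to show
\[
 \sum_{y \in \mathcal A_{\Delta,\id c}(L_x)} w_x(y;\id c)\,P(y)
 = \ll{\varphi_{x,P},\,\eta\ssl_{D,\id a}}.
\]

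Next I would expand the right-hand side using the definition of $\eta\ssl_{D,\id a}$ and the formula for the height pairing of forms with minimal support, Proposition~\ref{prop:height_on_phi}. Writing $\eta\ssl_{D,\id a}$ as a sum over $X_{\Delta,\id c}$ of scalar multiples of $\varphi_{x',P_\Delta}$, bilinearity gives
\[
 \ll{\varphi_{x,P},\,\eta\ssl_{D,\id a}}
 = \frac{1}{w(\omega;\id c)}\sum_{x'\in X_{\Delta,\id c}}
 \frac{w_{x'}(\omega;\id c)}{[\OOx[x']:\OOx]}
 \sum_{\gamma \in \Gamma_{x,x'}} \ll{P\cdot\gamma,\,P_\Delta}.
\]
By the defining property \eqref{eqn:gegenbauer} of $P_\Delta$, $\ll{P\cdot\gamma,P_\Delta} = w(\omega;\id c)\,(P\cdot\gamma)(\omega)$, and since $\gamma$ acts on $W$ by conjugation, $(P\cdot\gamma)(\omega) = P(\gamma\,\omega\,\gamma^{-1})$ up to the conventions of the representation. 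The key geometric step is then to reorganize the double sum over pairs $(x',\gamma)$ with $x' \in X_{\Delta,\id c}$ and $\gamma \in \Gamma_{x,x'} = (B^\times \cap x^{-1}\hatx R x')/\OOx$ into a sum over the elements $y \in \mathcal A_{\Delta,\id c}(L_x)$: each such $y$ is (a scalar multiple of) a conjugate $\gamma\,\omega\,\gamma^{-1}$ determined by an optimal embedding of $\OO[\Delta,\id c]$, and conversely the orbit structure matches $X_{\Delta,\id c}$ modulo $K^\times$ weighted by $[\OOx[x']:\OOx]^{-1}$. This is the standard special-points bijection: $\hatx R$-classes of embeddings of $\OO[\Delta,\id c]$ into $R_x$, counted with the unit-index weight, biject with $R_x$-integral elements of discriminant $\Delta$ in the appropriate ideal, i.e. with $\mathcal A_{\Delta,\id c}(L_x)$. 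One must check that under this bijection $w_{x'}(\omega;\id c)/w(\omega;\id c)$ matches $w_x(y;\id c)$; this follows from the transformation law \eqref{eqn:transf_wx} for $w_x$ under conjugation together with the definition of $w(\vardot;\id c)$, using Lemma~\ref{lem:womeganeq0} to guarantee $w(\omega;\id c)\neq 0$ so the normalization makes sense.

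The main obstacle is precisely this combinatorial identification of $X_{\Delta,\id c}$ with $\mathcal A_{\Delta,\id c}(L_x)$ and the bookkeeping of the stabilizer/unit factors: one has to be careful that an element $y \in \mathcal A_{\Delta,\id c}(L_x)$ generates an order $\OO \oplus y\,\id c$ which may be non-maximal (when $(D,\id a)$ is not fundamental), that the group $\Gamma_{x,x'}$ modulo the stabilizer of $y$ in $K^\times/F^\times$ accounts exactly for the factor $[\OOx[x']:\OOx]^{-1}$, and that the sum is genuinely finite and independent of the chosen embedding $\OO[\Delta,\id c]\hookrightarrow R$. I would handle the non-fundamental case by noting that every $y$ of discriminant $\Delta$ lying in $\id c^{-1}L_x$ lies in $\OO[\Delta',\id c']$-form for the order it generates, and that the special-point set and the weight function $w_x(\vardot;\id c)$ are insensitive to this via \eqref{eqn:whomog}–\eqref{eqn:wtransp}; the fundamental case \eqref{eqn:eta_fund} is then the clean prototype. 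Once the bijection is in place the proof closes by substituting back and recognizing $\sum_y w_x(y;\id c)\,P(y)$ on the nose.
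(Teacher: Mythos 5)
Your proposal is correct and follows essentially the same route as the paper's proof: reduce via \eqref{eqn:phi=sum_phix} to $\varphi=\varphi_{x,P}$, compute the Fourier coefficient by \eqref{eqn:cfourier} and \eqref{eqn:fourier_coeffs}, expand the height pairing by Proposition~\ref{prop:height_on_phi} and \eqref{eqn:gegenbauer}, and match pairs $(x',\gamma)$ against elements of $\mathcal{A}_{\Delta,\id c}(L_x)$ with the unit-index weights cancelling the $[\OOx[x']:\OOx]^{-1}$ in $\eta\ssl_{D,\id a}$. The paper runs the same count in the opposite direction (an injection $\Gamma_{\!x}\backslash\mathcal{A}_{\Delta,\id c}(L_x)\to X_{\Delta,\id c}$ with $\operatorname{Stab}_{\Gamma_{\!x}}y\simeq\OOx[x\gamma]/\OOx$, the special points off the image pairing to zero against $\varphi_{x,P}$ by support), and it also disposes of the degenerate case where $(D,\id a)$ is not a discriminant or no embedding exists via Proposition~\ref{prop:discvsdisc} --- a case your sketch omits; note also that your concern about non-maximal orders is vacuous, since every $y\in\mathcal{A}_{\Delta,\id c}(L_x)$ is conjugate to $\omega$ and generates a copy of $\OO[\Delta,\id c]$ itself.
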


\begin{proof}

We can assume that $(D, \id a)$ is discriminant and that there exists an
embedding $\OO[\Delta,\id c] \hookrightarrow R$ since otherwise, by
Proposition~\ref{prop:discvsdisc}, both sides of \eqref{eqn:fourier=height}
vanish.
Furthermore, by \eqref{eqn:phi=sum_phix} we can assume that $\varphi =
\varphi_{x,P}$ with $P\in V_\mathbf{k}^{\Gamma_{\!x}}$, so that
$\tlu(\varphi) = \tlu_{x,P}$. 

Let $\Gamma_{\!x}$ act on $\mathcal{A}_{\Delta,\id{c}}(L_x)$ by conjugation.
Given $y \in \mathcal{A}_{\Delta,\id{c}}(L_x)$, since $\disc(y)=\disc(\omega)$,
we can assume there exists $\gamma\in B^\times$ such 
that $y=\gamma\omega\gamma^{-1}$. In particular, $\OO[\Delta,\id{c}] = \OO 
\oplus 
\id{c}\,\omega$. The map $y\mapsto x\gamma$ induces an injection
\[
	\Gamma_{\!x} \backslash \mathcal{A}_{\Delta,\id{c}}(L_x)
	\longrightarrow
	X_{\Delta,\id{c}}\,.
\]
Note that
$\Stab_{\,\Gamma_{\!x}} y = (R_x\cap F(y))/\OOx
\simeq
(R_{x\gamma}\cap K)/\OOx = \OOx[x\gamma]/\OOx$.
Note that
$w(\omega;\id c) \, w_x(y;\id c) \, P(y)
= w_{x\gamma}(\omega;\id c) \, \ll{P\cdot\gamma,P_\Delta}$.
Furthermore, using that $P$ is fixed by $\Gamma_x$, by 
Proposition~\ref{prop:height_on_phi} we have that
$\ll{P\cdot\gamma,P_\Delta} = \frac1{t_x} \ll{\varphi_{x,P},
\varphi_{x\gamma,P_\Delta}}$. Using these facts, \eqref{eqn:fourier_coeffs}
and \eqref{eqn:cfourier} we get that
\begin{align*}
\idn a \,\lambda(-uD,\id a;\tlu_{x,P})
& = \sum_{y\in\mathcal{A}_{\Delta,\id{c}}(L_x)} w_x(y;\id c) \, P(y) \\
& = \sum_{y\in\Gamma_{\!x}\backslash\mathcal{A}_{\Delta,\id{c}}(L_x)}
[\Gamma_{\!x}:\operatorname{Stab}_{\,\Gamma_{\!x}}y]  \, w_x(y;\id c) \, P(y)\\
& = \tfrac1{w(\omega;\id c)}\sum_{x\gamma\in X_{\Delta,\id{c}}}
\frac{t_x \, w_{x\gamma}(\omega;\id c)}{[\OOx[x\gamma]:\OOx]}\,
\ll{P\cdot\gamma,P_\Delta}
\\&= \tfrac1{w(\omega;\id c)} \sum_{z\in X_{\Delta,\id{c}}}
\frac{w_z(\omega;\id c)}{[\OOx[z]:\OOx]}\,
\ll{\varphi_{x,P}, \varphi_{z,P_\Delta}}
 = \ll{\varphi_{x,P},\eta\ssl_{D, \id a}} \,.
\end{align*}
Note that in the last sum $\ll{\varphi_{x,P}, \varphi_{z,P_\Delta}}=0$
unless $z=x\gamma$.
\end{proof}

\section{Central values and the height pairing}\label{sect:height_geom}

We start this section by comparing the geometric pairing on CM-cycles of 
\cite{zhang-gl2} (see \cite{Xue-Rankin} for the case of higher 
weight) with the height pairing introduced in Sect.~\ref{sect:quat_forms}.
We refer to \cite[Sect.~3]{waldspu} for details.

Let $K/F$ be a totally imaginary quadratic extension.
We assume that there exists an embedding $\OO[K] \hookrightarrow R$, which we
fix.
Let $\sC = (\hatx B / \hatx F) / (K^\times / 
F^\times)$, and let $\pi:\hatx B / \hatx F\to\sC$ be the projection 
map.
We fix a Haar measure $\mu$ on $\hatx B / \hatx F$.
We write $\muR = \mu\bigl(\hatx R/\hatOOx\bigr)$.

We consider the space $\DC$ of 
\emph{CM-cycles} on $\sC$. These are locally constant
functions on $\sC$ with compact support.
This space comes equipped with the action of 
Hecke operators $\Tm$.
Furthermore, given $v\in V$ which is fixed by $K^\times/F^\times$, 
we consider the \emph{geometric pairing} $\ll{\cdot, \cdot}_v$ on $\DC$ induced
by $v$ as in \cite{waldspu}.

Given $a \in \hatx K$, we let $\alpha_a \in \DC$ be the 
characteristic function of $\pi\bigl(\hatx Ra\bigr) \subseteq \sC$.
Since $\OO[K]\subseteq R$, the CM-cycle $\alpha_a$ depends only on the element
in $\cl(K)$ determined by $a$.
The same holds for the quaternionic modular form $\varphi_{a,v}$, by
\eqref{eqn:transf_phi2}.

\begin{prop}\label{prop:geom_vs_height}

Let $\id{m}\subseteq\OO$ be an ideal. For $a, b \in \cl(K)$ we have that
\[
 \frac{\ll{\Tm\alpha_a, \alpha_b}_v}{\muR}
 =
 \tfrac1{\tK^{\;2}}\,
 \sum_{\xi\in\cl(F)}\ll{\Tm\varphi_{\xi a,v}, \varphi_{b,v}}\,.
\]
\end{prop}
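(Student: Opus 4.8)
The statement compares two pairings attached to a CM extension $K/F$: the geometric pairing $\ll{\cdot,\cdot}_v$ on CM-cycles in $\DC$, and the height pairing on quaternionic forms. The natural strategy is to unwind both sides into sums over double cosets and match them term by term. First I would recall from \cite{waldspu} the explicit formula for the geometric pairing: for $a,b\in\hatx K$, the value $\ll{\Tm\alpha_a,\alpha_b}_v$ is computed by pairing the Hecke translate of the characteristic function of $\pi(\hatx R a)$ against that of $\pi(\hatx R b)$, which amounts to a sum over $\hatx R$-orbits (weighted by $\mu$) of elements $h\in H_\id m$ with $\pi(h\,\hatx R a)\cap\pi(\hatx R b)\neq\varnothing$, each contributing $\ll{v\cdot\gamma,v}$ for appropriate $\gamma\in K^\times/F^\times$. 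On the quaternionic side, Proposition~\ref{prop:height_on_phi} gives $\ll{\Tm\varphi_{\xi a,v},\varphi_{b,v}}=\sum_{\gamma\in\Gamma_{\xi a,b}^{(\id m)}}\ll{v\cdot\gamma,v}$ after applying Proposition~\ref{prop:hecke_on_phi} to expand $\Tm\varphi_{\xi a,v}=\sum_{h\in H_\id m/\hatx R}\varphi_{h^{-1}\xi a,v}$.

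\medskip

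The key combinatorial identity to establish is a bijection, up to the stated normalizing factors $\muR$, $\tK^{-2}$ and the sum over $\cl(F)$, between the index set appearing on the geometric side and the disjoint union over $\xi\in\cl(F)$ of the index sets $\Gamma^{(\id m)}_{\xi a,b}=(B^\times\cap (\xi a)^{-1}\hatx R H_\id m\, b)/\OOx$ appearing on the quaternionic side. Concretely: an element of $\hatx B$ realizing an intersection $\pi(h\hatx R a)\cap \pi(\hatx R b)$ gives, after choosing representatives, an element $\gamma\in B^\times$ with $(\xi a)^{-1}\gamma \in\hatx R h b$ for a well-defined class $\xi\in\cl(F)$ (this is where $\hatx B/\hatx F$ versus $\hatx B$ accounts for the $\cl(F)$-sum), and conversely. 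The factor $\tK^{\,2}=[\OOx[K]:\OOx]^2$ arises from comparing the stabilizer in $K^\times/F^\times$ (which acts in the definition of $\sC$) against the trivial stabilizer in the $\varphi_{x,v}$ picture, squared because it appears once on the left of the double coset and once on the right; the factor $\muR=\mu(\hatx R/\hatOOx)$ converts the measure-theoretic count on $\sC$ into an honest cardinality. I would carry this out by: (i) writing $\ll{\Tm\alpha_a,\alpha_b}_v/\muR$ as $\sum \ll{v\cdot\gamma,v}$ over a set of $\gamma$ via the definitions in \cite{waldspu}; (ii) independently writing the right-hand side as $\tK^{-2}\sum_{\xi}\sum_{\gamma\in\Gamma^{(\id m)}_{\xi a,b}}\ll{v\cdot\gamma,v}$ using Propositions~\ref{prop:hecke_on_phi} and \ref{prop:height_on_phi}; (iii) exhibiting the bijection between the two index sets.

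\medskip

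**Main obstacle.** The delicate point is bookkeeping the group-theoretic normalizations: precisely why the $\cl(F)$-sum and the $\tK^{-2}$ appear, and why $\muR$ is exactly the right measure factor. Passing between $\hatx B$ (where $\varphi_{x,v}$ lives) and $\hatx B/\hatx F$ (where $\sC$ lives) requires care because $F^\times\cap\hatx F = \OOx$ only up to the class group, so the fibers of $\hatx B\to\hatx B/\hatx F$ over a double coset are indexed by $\cl(F)$ — this is the source of the sum. Similarly, the passage from $\hatx K$-cosets to $K^\times$-cosets in the definition of $\sC$ introduces $\OOx[K]/\OOx$, hence $\tK$, and it enters quadratically because $K^\times$ acts on the right in the triple quotient defining $\sC$ while $\hatx K$ appears in $\alpha_a$. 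The cleanest route, which I would follow, is essentially the computation already done in \cite[Sect.~3]{waldspu} for $\id m=\OO$; the present statement differs only by inserting the Hecke operator $\Tm$, and Hecke-equivariance of $\pi$ together with Proposition~\ref{prop:hecke_on_phi} reduces the general case to that one. So in practice the proof is: reduce to \cite[Sect.~3]{waldspu} via Hecke functoriality, then cite it, spelling out only the interaction of $\Tm$ with the double-coset bijection.
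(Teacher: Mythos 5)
Your proposal is correct in substance, but note that the paper disposes of this proposition in one line: its proof is simply a citation of \cite[Proposition 3.5]{waldspu}, and the cited statement already contains the Hecke operator $\Tm$, so no reduction from general $\id m$ to $\id m=\OO$ is needed there. What you outline in steps (i)--(iii) is essentially a reconstruction of the prequel's argument: unwind the geometric pairing into a weighted double-coset count, unwind the right-hand side via Propositions~\ref{prop:hecke_on_phi} and \ref{prop:height_on_phi}, and match index sets, with $\muR$ converting volumes of $\hatx R$-orbits in $\hatx B/\hatx F$ into counts, the $\cl(F)$-sum accounting for the fibres of $\hatx B\to\hatx B/\hatx F$ over the support of the cycles, and one factor of $\tK$ contributed by each of the two cycles; this is the right picture, and it buys a self-contained proof where the paper merely cites. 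The one step I would not accept as stated is your closing shortcut of reducing to $\id m=\OO$ ``via Hecke functoriality'': $\Tm\alpha_a$ is the sum over $h\in\hatx R\backslash H_{\id m}$ of the characteristic functions of $\pi(h^{-1}\hatx R a)$, and $h^{-1}\hatx R a$ is not an $\hatx R$-coset, so $\Tm\alpha_a$ is not a linear combination of cycles $\alpha_{a'}$ with $a'\in\cl(K)$; hence the general case does not follow formally from the $\id m=\OO$ case, and one must carry $H_{\id m}$ through the double-coset comparison --- which is exactly what your direct computation (and the proof in \cite{waldspu}) does anyway. Since the cited proposition already covers general $\id m$, this defect is harmless if you take the citation route, but it should be removed or repaired in the self-contained version.
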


\begin{proof}
 See \cite[Proposition 3.5]{waldspu}.
\end{proof}

Let $\Xi$ be a character of $\cl(K)$. 
Let $\alpha_{\Xi}\in \DC$ be the CM-cycle given by
\begin{equation}\notag
    \alpha_{\Xi} = \tfrac{\mK}{\hF} \sum_{a\in\cl(K)} \Xi(a) \alpha_a\,.
\end{equation}
Similarly we define
\begin{align}\label{eqn:alpha_K}
    \psi_{\Xi,v} = \tfrac1{\tK} \sum_{a\in\cl(K)} \Xi(a) \varphi_{a,v}\qquad 
\in \mathcal{M}_\rho(R)\,.
\end{align}
After these definitions and Proposition~\ref{prop:geom_vs_height} we get the 
following result.

\begin{coro}\label{coro:geom=height}
Let $\id{m}\subseteq\OO$ be an ideal. 
Assume that $\Xi$ is trivial in $\cl(F)$.
Then $\psi_{\Xi,v} \in \mathcal{M}_\rho(R,\bbu)$ and 
\[
 \frac{\ll{\Tm\alpha_{\Xi}, \alpha_{\Xi}}_v}{\muR}
 = \frac{\mK^{\;2}}{\hF}
 \,\ll{\Tm\psi_{\Xi,v}, \psi_{\Xi,v}}\,.
\] 
\end{coro}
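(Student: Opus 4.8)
The plan is to derive Corollary~\ref{coro:geom=height} directly from Proposition~\ref{prop:geom_vs_height} by specializing to the CM-cycle $\alpha_\Xi$ and the quaternionic form $\psi_{\Xi,v}$, taking advantage of the linearity of both the Hecke operators $\Tm$ and the two pairings in each variable. First I would verify that $\psi_{\Xi,v} \in \mathcal{M}_\rho(R,\bbu)$: by \eqref{eqn:transf_phi} the action of $\xi \in \cl(F) \subseteq \widetilde\Bil(R)$ sends $\varphi_{a,v}$ to $\varphi_{\xi^{-1}a,v}$, and since $\varphi_{a,v}$ depends only on the class of $a$ in $\cl(K)$, this amounts to replacing $a$ by $\xi^{-1}a$ in the defining sum \eqref{eqn:alpha_K}; because $\Xi$ is trivial on the image of $\cl(F)$ in $\cl(K)$, the character values are unchanged and the sum is invariant, so $\psi_{\Xi,v}\cdot\xi = \psi_{\Xi,v}$.

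Next I would expand both sides of the asserted identity using bilinearity. On the left, $\ll{\Tm\alpha_\Xi,\alpha_\Xi}_v = \bigl(\tfrac{\mK}{\hF}\bigr)^2 \sum_{a,b\in\cl(K)} \Xi(a)\overline{\Xi(b)} \ll{\Tm\alpha_a,\alpha_b}_v$ (with appropriate conjugation convention matching the paper's pairing). Dividing by $\muR$ and applying Proposition~\ref{prop:geom_vs_height} term by term replaces each $\ll{\Tm\alpha_a,\alpha_b}_v/\muR$ by $\tfrac1{\tK^2}\sum_{\eta\in\cl(F)} \ll{\Tm\varphi_{\eta a,v},\varphi_{b,v}}$. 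On the right, $\ll{\Tm\psi_{\Xi,v},\psi_{\Xi,v}} = \tfrac1{\tK^2}\sum_{a,b\in\cl(K)}\Xi(a)\overline{\Xi(b)}\ll{\Tm\varphi_{a,v},\varphi_{b,v}}$. So the claim reduces to matching
\[
 \tfrac{\mK^{\,2}}{\hF^{\,2}} \cdot \tfrac1{\tK^{\,2}} \sum_{a,b,\eta}
 \Xi(a)\overline{\Xi(b)} \ll{\Tm\varphi_{\eta a,v},\varphi_{b,v}}
 \quad\text{versus}\quad
 \tfrac{\mK^{\,2}}{\hF} \cdot \tfrac1{\tK^{\,2}} \sum_{a,b}
 \Xi(a)\overline{\Xi(b)} \ll{\Tm\varphi_{a,v},\varphi_{b,v}}.
\]
The point is that in the triple sum, reindexing $a \mapsto \eta^{-1}a$ and using that $\varphi_{\eta a,v}$ depends only on the class of $\eta a$ in $\cl(K)$ together with $\Xi(\eta^{-1}a) = \Xi(a)$ (triviality of $\Xi$ on $\cl(F)$) shows the $\eta$-sum contributes an overall factor of $\hF = \num{\cl(F)}$, turning $\tfrac1{\hF^2}\sum_\eta$ into $\tfrac1{\hF}$. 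This is exactly the discrepancy between the two constants, so the identity follows.

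The main subtlety — and the only place requiring care rather than bookkeeping — is the interaction between the two quotient maps: $\varphi_{a,v}$ and $\alpha_a$ are indexed by $\cl(K)$, but Proposition~\ref{prop:geom_vs_height} introduces a sum over $\cl(F)$ acting by multiplication, and one must check that this $\cl(F)$-action on indices is compatible with the $\cl(F)$-action $\widetilde\Bil(R)$ defines on forms (so that the reindexing is legitimate and the well-definedness of $\varphi_{\eta a,v}$ in terms of the $\cl(K)$-class is genuinely available from \eqref{eqn:transf_phi2}). Once that compatibility is in hand, everything else is linear algebra with characters. I would also make a brief remark that the $v$ appearing here is fixed by $K^\times/F^\times$, as required for the geometric pairing to be defined, so all the forms $\varphi_{a,v}$ and hence $\psi_{\Xi,v}$ are the relevant ones.
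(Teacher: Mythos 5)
Your argument is correct and is exactly the paper's (implicit) proof: the paper derives the corollary directly from the definitions of $\alpha_\Xi$ and $\psi_{\Xi,v}$ together with Proposition~\ref{prop:geom_vs_height}, by bilinearity and the reindexing $a\mapsto \xi a$ over $\cl(K)$ using triviality of $\Xi$ on the image of $\cl(F)$, which is precisely your computation (including the bookkeeping $\tfrac{\mK^2}{\hF^2}\cdot \hF = \tfrac{\mK^2}{\hF}$ and the invariance argument giving $\psi_{\Xi,v}\in\mathcal{M}_\rho(R,\bbu)$).
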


\subsection*{Central values}
Let $l \in F^\times$ and $D \in F^{-l}$.
Denote $\Delta = lD$, and let $K = F(\sqrt{\Delta})$. 
We let $\Xi$ be character of $C_K$ corresponding to the quadratic extension 
$F(\sqrt{l},\sqrt{D}) / K$. It satisfies that
\begin{equation}\label{eqn:genus_char}
  \Xi(a) = \chic{l}\left(\norm(a)\right)
         = \chic{D}\left(\norm(a)\right)
         \qquad \forall \, a \in C_K.
\end{equation}
If $\cond{l}$ and $\cond{D}$ are prime to each other, then 
$\cond{\Delta} = \cond{l} \cond{D}$, and \eqref{eqn:genus_char} implies that 
$\Xi$ is unramified; hence it is a character of $\cl(K)$. Furthermore, it is 
trivial in $\cl(F)$. These conditions on $\Xi$ are used in
Theorem~\ref{thm:xue} below.

Let $\id N \subseteq \OO$. We assume that $\cond{\Delta}$ is prime to $\id N$,
and that
\begin{equation}\label{eq:SigmaD}
    \Sigma_{\Delta} = \mathbf{a} \cup
    \left\{\id p\mid \id N \;:\; \chic[*]{\Delta}(\id p)^{v\p(\id N)}=-1 
\right\}
\end{equation}
is of even cardinality.
For the rest of this section we assume that $B$ is 
ramified exactly at $\Sigma_{\Delta}$.
Furthermore, we assume that $R$ has discriminant $\id N$.

Let $g \in \Sk$ be a normalized newform with trivial central character.
The Ranking-Selberg convolution $L$-function $L(s,g,\Xi)$ satisfies that
\[
  L(s,g,\Xi) = L(s,g\otimes\chic{l}) \, L(s,g\otimes\chic{D}) \,.
\]
We denote this function by $L_{l,D}(s,g)$.
A result of \cite{zhang-gl2}, in the case of parallel weight $\mathbf 2$, gives
the central value $L_{l,D}(1/2,g)$ in terms of the geometric pairing.
Using the generalization to higher weights given by Xue we obtain
the following formula.

\begin{thm}\label{thm:xue}
Let $\Tg$ be a polynomial in the Hecke operators prime to $\id N$ giving the 
$g$-isotypical projection.
Assume that $\id N\subsetneq\OO$ and $\cond{\Delta}$ is prime to $2 \id N$. 
Furthermore, assume that $\cond l$ and $\cond D$ are prime to each other.
Let $P_{\Delta}\in V_\mathbf{k}$ be as in \eqref{eqn:gegenbauer}. Then
\begin{equation}\notag
 L_{l,D}(1/2,g) =
 \ll{g,g}
 \:
 \frac{\dF^{\;1/2}}{\hF}
 \:
 \frac{c(\mathbf{k}) \: C(\id N)}{\mathcal N(\cond \Delta)^{1/2}}
 \:
 \frac{\mK^{\;2}}{(-\Delta)^\mathbf{k}}
 \:
 \ll{\Tg\psi_{\Xi,P_\Delta}, \psi_{\Xi,P_\Delta}}\,,
\end{equation}
where $\ll{g,g}$ is the Petersson norm of $g$, and $c(\mathbf{k})$ and $C(\id
N)$ are the positive rational constants given, respectively, in \cite[Corollary
3.13, Theorem 3.9]{waldspu}.
\end{thm}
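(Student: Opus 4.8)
\textbf{Proof plan for Theorem~\ref{thm:xue}.}
The plan is to reduce the statement to the explicit Rankin--Selberg central value formula of Xue (the higher weight generalization of \cite{zhang-gl2}), which expresses $L_{l,D}(1/2,g)$ in terms of the geometric pairing $\ll{\cdot,\cdot}_v$ on CM-cycles, and then to rewrite that pairing in terms of the height pairing on quaternionic modular forms via Corollary~\ref{coro:geom=height}. First I would recall Xue's formula in the precise normalization used in \cite[Sect.~3]{waldspu}: under the running hypotheses ($B$ ramified exactly at $\Sigma_\Delta$, $R$ of discriminant $\id N$, $\cond\Delta$ prime to $2\id N$, and $\cond l$, $\cond D$ coprime so that $\Xi$ is an unramified character of $\cl(K)$ trivial on $\cl(F)$), it gives $L_{l,D}(1/2,g)$ as the product of $\ll{g,g}$, an archimedean factor built from $c(\mathbf k)$ and the normalization $(-\Delta)^{-\mathbf k}$ coming from the choice of $P_\Delta$ in \eqref{eqn:gegenbauer}, a finite adelic factor $C(\id N)/\mathcal N(\cond\Delta)^{1/2}$ together with the discriminant factor $\dF^{1/2}$, and the geometric pairing $\ll{\Tg\,\alpha_\Xi,\alpha_\Xi}_{P_\Delta}/\mu_R$ evaluated on the $g$-isotypical component. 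The $g$-isotypical projection is realized by the Hecke polynomial $\Tg$; since $\id N\subsetneq\OO$ and $B$ is a (totally definite) quaternion algebra, the Jacquet--Langlands transfer of $g$ to $\mathcal S_\rho(R)$ exists by Hypothesis~\ref{hyp:JL}, so $\Tg$ picks out a genuine form there.

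Next I would apply Corollary~\ref{coro:geom=height} with $v = P_\Delta$: because $P_\Delta$ is fixed by $K^\times/F^\times$ by \eqref{eqn:P_D-Kinv}, the geometric pairing $\ll{\cdot,\cdot}_{P_\Delta}$ is defined, and because $\Xi$ is trivial in $\cl(F)$ we have $\psi_{\Xi,P_\Delta}\in\mathcal M_\rho(R,\bbu)$ and
\[
 \frac{\ll{\Tm\alpha_\Xi,\alpha_\Xi}_{P_\Delta}}{\mu_R}
 = \frac{\mK^{\;2}}{\hF}\,\ll{\Tm\psi_{\Xi,P_\Delta},\psi_{\Xi,P_\Delta}}.
\]
Since $\Tg$ is a polynomial in Hecke operators $\Tm$ with $\id m$ prime to $\id N$, this identity extends by linearity from $\Tm$ to $\Tg$; this is where the factors $\mK^{\;2}/\hF$ in the statement come from, combining with the $\dF^{1/2}$, $c(\mathbf k)$, $C(\id N)$ and $\mathcal N(\cond\Delta)^{1/2}$ already present in Xue's formula. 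Substituting into Xue's formula and collecting constants yields exactly the asserted identity.

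The main obstacle is bookkeeping of normalizations rather than any single hard step: one must match the automorphy-factor and Hecke-operator conventions of \cite{zhang-gl2,Xue-Rankin} with those of \cite{shim-hh} and \cite{waldspu}, track the Haar-measure normalization $\mu_R=\mu(\hatx R/\hatOOx)$ that appears in Proposition~\ref{prop:geom_vs_height} and cancels against the definition of $\alpha_\Xi$, and verify that the polynomial $P_\Delta$ of \eqref{eqn:gegenbauer} is the same test vector used in Xue's local computation at the archimedean and ramified places — this is what produces the factor $(-\Delta)^{-\mathbf k}$ and forces the hypothesis $\sgn(l)=(-1)^\mathbf k$ so that $w(\vardot;\id c)\,P(\vardot)$ is even and $P_\Delta$ is well defined. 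Once these identifications are in place, the combination of Xue's formula with Corollary~\ref{coro:geom=height} is immediate, and the positivity and rationality of $c(\mathbf k)$ and $C(\id N)$ are quoted from \cite[Corollary~3.13, Theorem~3.9]{waldspu}.
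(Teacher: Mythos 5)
Your proposal follows exactly the paper's route: the proof is the combination of Xue's central value formula \cite[Theorem 1.2]{Xue-Rankin} with Corollary~\ref{coro:geom=height}, the remaining normalization and constant-matching being precisely what is quoted from \cite{waldspu} (there via Lemma 3.14, together with Corollary 3.13 and Theorem 3.9). Your identification of where the factor $\mK^{\;2}/\hF$ enters and of the bookkeeping needed to match $P_\Delta$ and the measure $\muR$ with Xue's normalization is the same argument, so the proposal is correct and essentially identical to the paper's proof.
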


\begin{proof}
Follows from \cite[Theorem 1.2]{Xue-Rankin}, Corollary~\ref{coro:geom=height}
and \cite[Lemma 3.14]{waldspu}.
\end{proof}

\section{An auxiliary result}\label{sect:orders}

Assume in this section that $R \subseteq B$ is an order of discriminant $\id N$ 
satisfying that for every $\id p \mid \id N$ the Eichler invariant $e(R\p)$ is 
not zero. The following result is Proposition 4.3 from \cite{waldspu}.

\begin{prop}\label{prop:bil_charact}
 
Let $\Bil(R\p) = R\p^\times \backslash N(R\p) / F\p^\times$. Then
\[
	\Bil(R\subp) \simeq
	\begin{cases}
		\Z/2\Z, & \id p \mid \id N, \\
		\{0\}, & \text{otherwise}.
	\end{cases}
\]

\end{prop}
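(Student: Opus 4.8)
The plan is to compute $\Bil(R\p)$ one prime at a time, using that the hypothesis $e(R\p)\neq 0$ determines the local structure of $R\p$. Since $R$ has reduced discriminant $\id N$, the algebra $B$ is unramified outside $\id N$; hence for $\id p\nmid\id N$ the completion $B\p$ is split and $R\p$ is a maximal order, so $\GL_2(F\p)$-conjugate to $M_2(\OO[\pv])$. An element of $B\p^\times\simeq\GL_2(F\p)$ normalizing $M_2(\OO[\pv])$ stabilizes the homothety class of the standard lattice, hence lies in $F\p^\times\GL_2(\OO[\pv])=F\p^\times R\p^\times$; thus $N(R\p)=F\p^\times R\p^\times$ and $\Bil(R\p)=\{0\}$. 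It remains to treat $\id p\mid\id N$, for which we must show $\Bil(R\p)\simeq\Z/2\Z$.

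For $\id p\mid\id N$ there are two subcases. If $B\p$ is the division algebra, then $e(R\p)\neq 0$ forces $R\p$ to be its unique maximal order $\OO[B\p]$, which is normalized by all of $B\p^\times$; since $B\p^\times/F\p^\times\OOx[B\p]$ is cyclic of order $2$ --- generated by the class of a uniformizer $\Pi$ of $B\p$, whose square lies in $F\p^\times\OOx[B\p]$ --- we obtain $\Bil(R\p)\simeq\Z/2\Z$. If instead $B\p\simeq M_2(F\p)$, then $e(R\p)\neq 0$ together with $\id p\mid\id N$ means that $R\p$ is an Eichler order of level $\id p^{v\p(\id N)}\neq\OO[\pv]$; write $R\p=\mathcal{O}_1\cap\mathcal{O}_2$, where $\{\mathcal{O}_1,\mathcal{O}_2\}$ is the distinguished pair of maximal orders with this intersection, namely the extreme vertices of the path of maximal orders containing $R\p$ in the Bruhat--Tits tree of $\GL_2(F\p)$. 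Any $z\in N(R\p)$ permutes the maximal orders containing $R\p$, hence the set $\{\mathcal{O}_1,\mathcal{O}_2\}$, giving a homomorphism $\phi\colon N(R\p)/F\p^\times\to\Z/2\Z$. The Atkin--Lehner element $\smat{0 & 1 \\ \pip^{v\p(\id N)} & 0}$ normalizes $R\p$ and interchanges $\mathcal{O}_1$ and $\mathcal{O}_2$, so $\phi$ is surjective; and, as explained below, $\ker\phi=F\p^\times R\p^\times/F\p^\times$. Hence $\Bil(R\p)\simeq\Z/2\Z$.

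The step requiring the most care is this last identification of $\ker\phi$, i.e. that an element $g\in\GL_2(F\p)$ fixing both $\mathcal{O}_1$ and $\mathcal{O}_2$ already lies in $F\p^\times R\p^\times$. Such a $g$ normalizes each maximal order $\mathcal{O}_i$, and the normalizer of $\mathcal{O}_i$ in $\GL_2(F\p)$ is $F\p^\times\mathcal{O}_i^\times$; writing $g=a u_1=b u_2$ with $a,b\in F\p^\times$ and $u_i\in\mathcal{O}_i^\times$, comparison of reduced norms gives $(b/a)^2\in\OOx[\pv]$, hence $b/a\in\OOx[\pv]$, and therefore $u_1\in\mathcal{O}_1^\times\cap\mathcal{O}_2^\times=R\p^\times$, so $g\in F\p^\times R\p^\times$. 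The remaining ingredients --- the classification of local quaternionic orders with nonzero Eichler invariant into the two types above, and the normalizer computation $N(\OO[B\p])=B\p^\times$ with $B\p^\times/F\p^\times\OOx[B\p]\simeq\Z/2\Z$ in the ramified case --- are standard; this is \cite[Proposition 4.3]{waldspu}.
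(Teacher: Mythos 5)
There is a genuine gap: your case analysis misclassifies the local orders to which the proposition applies. The standing hypothesis of this section is only that $e(R\p)\neq 0$ for every $\id p\mid\id N$, and that class is strictly larger than the two families you treat. At a split prime $\id p\mid\id N$ one can have $e(R\p)=-1$: these are the residually inert (non-Eichler) local orders, of reduced discriminant $\id p^{v\p(\id N)}$ with $v\p(\id N)$ even, and they really occur in the paper's construction, since in Sect.~\ref{sect:main} one takes $e(R\p)=\chic[*]{l}(\id p)\,\varepsilon(\id p)$, which is $-1$ at split primes of even exponent whenever $\chic[*]{l}(\id p)\varepsilon(\id p)=-1$. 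Your Bruhat--Tits argument, which writes $R\p=\mathcal{O}_1\cap\mathcal{O}_2$ as the intersection of the two endpoint maximal orders of a path, simply does not apply to such orders. Likewise, at a ramified prime the condition $e(R\p)\neq 0$ (equivalently $e(R\p)=-1$) does not force $R\p$ to be the maximal order: the non-maximal orders $\OO[L]+\Pi^{2r}\OO[B\p]$, with $L/F\p$ unramified quadratic, have $e=-1$ and reduced discriminant $\id p^{2r+1}$, and these are exactly what is needed at $\id p\in\Sigma\ssl$ when $v\p(\id N)\geq 3$ is odd, which Hypothesis~\ref{hyp:JL} allows. For them your computation $N(R\p)=B\p^\times$ is unavailable, and one must show directly that $N(R\p)$ is generated by $F\p^\times R\p^\times$ together with a single element $w\p$ (of reduced norm $-\pi\p^{v\p(\id N)}$ up to units, the element reused in Lemma~\ref{lem:sect5}). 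That computation is precisely the content of \cite[Proposition 4.3]{waldspu}, i.e.\ the statement at issue, so your closing appeal to it for ``the classification into the two types above'' is both circular and, as a classification, false.

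What you do establish correctly: $\Bil(R\p)=\{0\}$ for $\id p\nmid\id N$; $\Bil(R\p)\simeq\Z/2\Z$ when $R\p$ is an Eichler order of level $\id p^{v\p(\id N)}$, $v\p(\id N)\geq 1$ (your identification of the kernel via reduced norms is fine); and the case of a maximal order in the division algebra, i.e.\ $v\p(\id N)=1$ at a ramified prime. The paper itself does not reprove the proposition but quotes \cite[Proposition 4.3]{waldspu}, which treats all local orders with nonzero Eichler invariant. To repair your argument you would need to add the normalizer computation for the residually inert orders in both the split case (even exponent $\geq 2$) and the ramified case (odd exponent $\geq 3$), for instance by using that such an order contains $\OO[L]$ with $L$ the unramified quadratic extension, that any normalizing element must preserve $\OO[L]$ up to conjugation by a unit of $R\p$, and then reducing to elements that either centralize $L$ or act on it by the nontrivial automorphism.
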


Let $D \in F^{-l}$, and let $\id a \in \IF$ be such that $(D,\id a)$ is a
fundamental discriminant. 
Consider the fundamental discriminant $(\Delta, \id c) = (lD, \id{ab})$, and let
$K = F(\sqrt \Delta)$.
Since $\id a$ and $\id c$ are determined by $D$, we omit them in the subindexes.

As in Sect.~\ref{sect:height_geom}, we assume that there exists an embedding 
$\OO[K]\hookrightarrow R$, so that $1 \in \widetilde X_\Delta$. 
There is a left action of $\widetilde\Bil(R)$ on $X_\Delta$, 
induced by the action of $N(\widehat R)$ on $\widetilde X_\Delta$ by left 
multiplication. 
There is also a right action of $\cl(K) = \hatOOx[K] \backslash \hatx K / 
K^\times$ on $X_\Delta$, induced by the action of $\hatx K$ on $\widetilde X_\Delta$ by 
right multiplication.
 The following result is Proposition 4.7 from \cite{waldspu}.

\begin{prop}\label{prop:action_transitive}
The group $\cl(K)$ acts freely on $X_\Delta$, and the action of $\Bil(R)$ on
$X_\Delta / \cl(K)$ is transitive. Furthermore, the latter action is free if 
$(\cond{\Delta}:\id N) = 1$.
\end{prop}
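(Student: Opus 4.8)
The plan is to reduce everything to a place-by-place count of optimal embeddings. Since the defining condition of $\widetilde X_\Delta$ is local, one has $\widetilde X_\Delta=\prod'_\pv\widetilde X_{\Delta,\pv}$, with local component $\widetilde X_{\Delta,\pv}=\{x_\pv\in B_\pv^\times:\OO[K_\pv]\subseteq(R_x)_\pv\}$, and $N(\widehat R)$, $\hatx R$, $\hatx K$ decompose in the same way. Sending $x_\pv$ to the embedding $a\mapsto x_\pv\,a\,x_\pv^{-1}$ of the localization of the fixed copy $\OO[K]\hookrightarrow R$ identifies, by Skolem--Noether, the set $\hatx R\backslash\widetilde X_\Delta/\hatx K=\prod_\pv\bigl(R_\pv^\times\backslash\widetilde X_{\Delta,\pv}/K_\pv^\times\bigr)$ with the product over $\pv$ of the sets of optimal embeddings $\OO[K_\pv]\hookrightarrow R_\pv$ up to $R_\pv^\times$-conjugacy (a single point for almost every $\pv$), under which the left action of $N(\widehat R)$ turns into $N(R_\pv)$-conjugacy factor by factor; through this the $\widetilde\Bil(R)$-action on $X_\Delta/\cl(K)=\hatx R\backslash\widetilde X_\Delta/\hatx K$ factors through $\Bil(R)=\prod'_\pv\Bil(R_\pv)$. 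I would then dispatch the freeness of the $\cl(K)$-action on $X_\Delta$ as a formality: if $a\in\hatx K$ fixes the class of $x$, then $uxa=x\gamma$ for some $u\in\hatx R$ and $\gamma\in K^\times$, so $\gamma a^{-1}=x^{-1}ux\in\widehat R_x^\times\cap\hatx K=\hatOOx[K]$ (using $R_x\cap K=\OO[K]$, as $\OO[K]$ is maximal), whence $a$ is trivial in $\cl(K)$.

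Granting this reduction, the two remaining claims would follow from the local embedding theory. At $\pv\nmid\id N$ the order $R_\pv$ is maximal and carries a single embedding class, on which the trivial group $\Bil(R_\pv)$ (Proposition~\ref{prop:bil_charact}) acts trivially. At $\pv\mid\id N$ one combines the hypothesis $e(R_\pv)\neq0$ with the classification of local orders of nonzero Eichler invariant --- the Eichler orders when $B_\pv$ is split, the maximal order when $B_\pv$ is a division algebra --- together with the standing hypothesis that $\OO[K]$ embeds in $R$. The facts to establish are: (i) when $\pv$ is inert or ramified in $K$, the existence of such an embedding forces $v\p(\id N)=1$; (ii) there are exactly two optimal embedding classes at $\pv$ when $\pv\nmid\cond{\Delta}$, and exactly one when $\pv\mid\cond{\Delta}$; and (iii) in the first case the nontrivial element of $\Bil(R_\pv)\cong\Z/2\Z$ --- represented by an Atkin--Lehner element, or by a uniformizer of the division algebra $B_\pv$ --- interchanges the two classes without fixing either, while in the second case it fixes the unique class. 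Given (ii) and (iii), each $\Bil(R_\pv)$ acts transitively on its local factor, so $\Bil(R)$ acts transitively on $X_\Delta/\cl(K)$; and, the action being transitive, the stabilizer of any point is conjugate to the product of the local stabilizers, namely $\prod_{\substack{\pv\mid\cond{\Delta}\\\pv\mid\id N}}\Bil(R_\pv)$, which is trivial precisely when $(\cond{\Delta}:\id N)=1$.

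The step I expect to be the main obstacle is the verification of (ii) and (iii): computing the local optimal-embedding numbers and the $\Bil(R_\pv)$-action on them, which genuinely needs a case analysis on $e(R_\pv)\in\{1,-1\}$, on whether $B_\pv$ is split, and on the splitting type of $\pv$ in $K$. The conceptual point that should organize all of these cases is that the nontrivial automorphism of $K_\pv/F_\pv$ is realized by conjugation by an element of $R_\pv^\times$ exactly when $\pv$ ramifies in $K$; this is what collapses the two ``oriented'' embedding classes into one $\Bil(R_\pv)$-fixed class and thereby accounts for the coprimality condition $(\cond{\Delta}:\id N)=1$ in the statement. I would carry this out by writing down the explicit Atkin--Lehner (or division-algebra uniformizer) representatives of $\Bil(R_\pv)$ supplied by Proposition~\ref{prop:bil_charact} and tracking their conjugation action on a concrete embedding in each case.
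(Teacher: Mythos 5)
Your overall strategy is the right one, and in fact it is the paper's: the paper gives no proof here, quoting the statement verbatim from \cite{waldspu} (Proposition 4.7), whose proof runs along exactly the lines you sketch — freeness of the $\cl(K)$-action from maximality of $\OO[K]$, identification of $X_\Delta/\cl(K)$ with $\hatx R\backslash\widetilde X_\Delta/\hatx K$, decomposition into local optimal-embedding classes, and an explicit computation of the conjugation action of the local normalizer generators $w\p$ (those of Proposition~\ref{prop:bil_charact}, with $\norm(w\p)=-\pi\p^{v\p(\id N)}$). Your argument for the freeness of the $\cl(K)$-action is complete and correct.

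The gap is in the local half, and it is not only that your items (ii) and (iii) are asserted rather than proved — your scoping claim (i) is false, and it removes from view precisely the cases this proposition is needed for. If $e(R\p)=-1$ then $R\p/J(R\p)$ is the quadratic extension of the residue field, and by Hensel lifting (the unramified quadratic order is monogenic and \'etale) $R\p$ contains a copy of the maximal order of the unramified quadratic extension of $F\p$; such orders exist with reduced discriminant $\id p^{n}$ for every even $n\ge 2$ (split $B\p$) and every odd $n\ge 1$ (division $B\p$). Hence when $\id p$ is inert in $K$ an embedding $\OO[K]\hookrightarrow R$ is compatible with arbitrary $v\p(\id N)$, not just $v\p(\id N)=1$; only the ramified case forces $v\p(\id N)=1$. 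These inert primes with $v\p(\id N)>1$ genuinely occur in the application of Sect.~\ref{sect:main}: there $e(R\p)=\chic[*]{l}(\id p)\varepsilon(\id p)$ and, for $D$ of type $\varepsilon$, $\chic[*]{\Delta}(\id p)=e(R\p)$, while Hypothesis~\ref{hyp:JL} allows odd exponents larger than $1$ (and even exponents at split primes). So the case analysis you plan — Eichler orders, plus the maximal order of the division algebra at $v\p(\id N)=1$ — omits the one family of local computations that distinguishes this setting from the classical square-free one: for the non-maximal orders of Eichler invariant $-1$ you must still show that the number of $R\p^\times$-classes of optimal embeddings of the unramified maximal order is $2$ and that $w\p$ interchanges them without fixed points, and that in the ramified case the unique class is fixed. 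Those computations (the content of the corresponding lemmas in \cite{waldspu}) are the actual substance of the transitivity and conditional freeness, and as it stands your proposal does not establish them.
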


Let $\delta\ssl$ be the character of $\Bil(R)$ given by
\begin{equation}\label{eqn:deltal}
	\delta\ssl(z) = \chic[*]{l}(\norm z).
\end{equation}
Assume that $\cond l$ and $\cond D$ are prime to each other, and let $\Xi$ be 
the character of $\cl(K)$ considered in the previous section.
Let $\etalD \in \mathcal{M}_\mathbf{k}(R)$ be as in \eqref{eqn:eta_fund},
and let $\psilD = \psi_{\Xi,P_\Delta} \in 
\mathcal{M}_\mathbf{k}(R,\bbu)$ be as in \eqref{eqn:alpha_K}. 

\begin{prop}\label{prop:eta=proy_psi}
 
Assume that $(\cond{\Delta}:\id N) = 1$. Then
\[
 \etalD = 
  \sum_{z \in \Bil(R)} \delta\ssl(z) \, \psilD \cdot z\,.
\]
In particular, $\eta\ssl_D \in 
\mathcal{M}_\mathbf{k}(R,\mathbbm{1})^{\delta\ssl}$.

\end{prop}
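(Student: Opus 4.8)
The plan is to unwind the definitions of $\etalD$ and $\psilD$ and compare the two sides directly as elements of $\mathcal{M}_\mathbf{k}(R)$, using the transitivity statement of Proposition~\ref{prop:action_transitive} as the combinatorial engine. First I would expand the right-hand side using \eqref{eqn:alpha_K}, \eqref{eqn:transf_phi}, and \eqref{eqn:deltal}: since $\psilD = \tfrac1{\tK}\sum_{a\in\cl(K)}\Xi(a)\,\varphi_{a,P_\Delta}$ and $\varphi_{a,P_\Delta}\cdot z = \varphi_{z^{-1}a,P_\Delta}$ for $z\in\widetilde\Bil(R)$, we get
\[
	\sum_{z\in\Bil(R)}\delta\ssl(z)\,\psilD\cdot z
	= \tfrac1{\tK}\sum_{z\in\Bil(R)}\sum_{a\in\cl(K)}
	\chic[*]{l}(\norm z)\,\Xi(a)\,\varphi_{z^{-1}a,P_\Delta}.
\]
The double sum is indexed, after the substitution $x = z^{-1}a$, by pairs describing the orbit of $1\in\widetilde X_\Delta$ under $\widetilde\Bil(R)\times\cl(K)$, and by Proposition~\ref{prop:action_transitive} the map $(z,a)\mapsto z^{-1}a$ gives a bijection onto $X_\Delta$ (freeness of the $\cl(K)$-action together with freeness of the induced $\Bil(R)$-action on $X_\Delta/\cl(K)$, which holds precisely because $(\cond\Delta:\id N)=1$). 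So the right-hand side collapses to $\tfrac1{\tK}\sum_{x\in X_\Delta}(\text{coefficient})\,\varphi_{x,P_\Delta}$, and it remains to identify the coefficient with $\tfrac1{w(\omega;\id c)}\,w_x(\omega;\id c)$ appearing in \eqref{eqn:eta_fund}.

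The key step is then to check that $\chic[*]{l}(\norm z)\,\Xi(a) = w_{z^{-1}a}(\omega;\id c)\,/\,w(\omega;\id c)$ along the bijection. Here I would use the definition $w_x(y) = \chic{l}(\norm x)\,w(xyx^{-1})$, the transformation rule \eqref{eqn:transf_wx}, and the characterization \eqref{eqn:genus_char} of $\Xi$ as $\Xi(a) = \chic{l}(\norm a)$; the $K^\times$-invariance of $\omega$ (it lies in $K$, so conjugation by $a\in\hatx K$ fixes it) and the fact that $w(\omega;\id c)\neq 0$ by Lemma~\ref{lem:womeganeq0} make the ratio well-defined and independent of the chosen representative. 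This is the part where I expect the bookkeeping to be most delicate: one must be careful that the weight function $w_x(\omega;\id c)$ depends only on the class $x\in X_\Delta = \hatx R\backslash\widetilde X_\Delta/K^\times$ (which follows from \eqref{eqn:transf_wx} and the $K^\times$-invariance), and that the normalization $\tfrac1{\tK}$ versus the $\tfrac1{w(\omega;\id c)\,\tK}$ in \eqref{eqn:eta_fund} matches — indeed both sides carry exactly one factor of $\tfrac1{\tK}$ once the $\cl(K)$-sum is replaced by the $X_\Delta$-sum.

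For the final sentence, that $\eta\ssl_D \in \mathcal{M}_\mathbf{k}(R,\bbu)^{\delta\ssl}$, I would argue that the displayed formula exhibits $\etalD$ as an averaging of the $\cl(F)$-fixed form $\psilD$ twisted by the character $\delta\ssl$ over $\Bil(R)$: since $\psilD \in \mathcal{M}_\mathbf{k}(R,\bbu)$ the sum lies in $\mathcal{M}_\mathbf{k}(R,\bbu)$, and for any $z_0\in\Bil(R)$ acting on the right we get $\etalD\cdot z_0 = \sum_z \delta\ssl(z)\,\psilD\cdot(zz_0) = \delta\ssl(z_0)^{-1}\sum_{z'}\delta\ssl(z')\,\psilD\cdot z' = \delta\ssl(z_0)\,\etalD$, using that $\delta\ssl$ takes values in $\{\pm1\}$ so $\delta\ssl(z_0)^{-1}=\delta\ssl(z_0)$. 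The main obstacle, as noted, is the careful verification of the coefficient-matching identity and of representative-independence; everything else is formal manipulation of the already-established transformation formulas.
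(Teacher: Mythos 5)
Your proposal is correct and follows essentially the same route as the paper's proof: expand $\psilD$ via \eqref{eqn:alpha_K} and \eqref{eqn:transf_phi}, rewrite the coefficient $\delta\ssl(z)\,\Xi(a)$ as $w_{z^{-1}a}(\omega;\id c)/w(\omega;\id c)$ using \eqref{eqn:genus_char}, the conjugation-invariance of $\omega$ under $\hatx K$, \eqref{eqn:wtransp} (together with Proposition~\ref{prop:bil_charact}) and Lemma~\ref{lem:womeganeq0}, and then collapse the double sum onto $X_\Delta$ via the free and transitive actions of Proposition~\ref{prop:action_transitive}. The only differences are cosmetic: you spell out the final averaging argument for the ``in particular'' claim, which the paper leaves implicit, and where you cite \eqref{eqn:transf_wx} for the $z$-twist the precise input is the local rule \eqref{eqn:wtransp} combined with Proposition~\ref{prop:bil_charact}.
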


\begin{proof}

Let $a \in \hatx K$. Using \eqref{eqn:wtransp} and
Proposition~\ref{prop:bil_charact} we see that $\delta\ssl(z) w_a(\vardot; \id
c) = w_{z^{-1} a}(\vardot; \id c)$ for every $z \in \Bil(R)$.
Furthermore, using Lemma~\ref{lem:womeganeq0} we can write $w_a(\omega;\id c)
\chic{l}(\norm a) = w(\omega;\id c)$.
Using these facts, \eqref{eqn:transf_phi},
\eqref{eqn:genus_char} and Proposition~\ref{prop:action_transitive}, we get that
\begin{align*}
    \sum_{z \in \Bil(R)} \delta\ssl(z) \, \psilD \cdot z
& = \tfrac{1}{\tK}\sum_{z \in \Bil(R)} \delta\ssl(z) \sum_{a \in \cl(K)}
  \chic{l}(\norm a) \varphi_{z^{-1} a,P_\Delta} \\
& = \tfrac1{w(\omega;\id c) \tK}
  \sum_{z \in \Bil(R)} \sum_{a \in \cl(K)}
  \delta\ssl(z)  w_a(\omega;\id c)\varphi_{z^{-1}a,P_\Delta} \\
& = \tfrac1{w(\omega;\id c) \tK}\sum_{x \in X_\Delta} w_x(\omega;\id c)\varphi_{x,P_\Delta}
  = \etalD\,.
\end{align*}
\end{proof}

The following statement follows from this result and
Proposition~\ref{prop:bil_charact}.

\begin{coro}\label{coro:psi_vs_eta}
 Assume that $(\cond{\Delta}:\id N) = 1$. If $\varphi \in 
\mathcal{M}_\mathbf{k}(R,\mathbbm{1})^{\delta\ssl}$, then
\[
   \ll{\varphi,\etalD}
   = 2^{\omegaN}
   \ll{\varphi,\psilD}
   \,.
\]
\end{coro}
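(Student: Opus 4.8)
The plan is to deduce Corollary~\ref{coro:psi_vs_eta} from Proposition~\ref{prop:eta=proy_psi} by pairing both sides with an arbitrary $\varphi \in \mathcal{M}_\mathbf{k}(R,\mathbbm{1})^{\delta\ssl}$ and exploiting the invariance of the height pairing under the action of $\widetilde\Bil(R)$. Concretely, by Proposition~\ref{prop:eta=proy_psi} we have
\[
 \ll{\varphi,\etalD}
 = \sum_{z \in \Bil(R)} \overline{\delta\ssl(z)} \, \ll{\varphi, \psilD \cdot z}
 = \sum_{z \in \Bil(R)} \delta\ssl(z)^{-1} \, \ll{\varphi, \psilD \cdot z}\,,
\]
using linearity of the pairing (it is conjugate-linear in the second slot, and $\delta\ssl(z) = \chic[*]{l}(\norm z) \in \{\pm 1\}$ so $\overline{\delta\ssl(z)} = \delta\ssl(z) = \delta\ssl(z)^{-1}$).

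Next I would use that the action of $\Bil(R)$ preserves the height pairing — this is recorded in Sect.~\ref{sect:quat_forms} as $\ll{\varphi \cdot z, \psi \cdot z} = \ll{\varphi,\psi}$, which applies here since $\varphi, \psilD \in \mathcal{M}_\mathbf{k}(R,\mathbbm{1})$ and $\Bil(R)$ acts on this space. Applying $z^{-1}$ to both entries of $\ll{\varphi, \psilD \cdot z}$ gives $\ll{\varphi \cdot z^{-1}, \psilD}$, and since $\varphi \in \mathcal{M}_\mathbf{k}(R,\mathbbm{1})^{\delta\ssl}$ we have $\varphi \cdot z^{-1} = \delta\ssl(z^{-1})\,\varphi = \delta\ssl(z)^{-1}\varphi$. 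Therefore each summand equals $\delta\ssl(z)^{-1} \ll{\varphi, \psilD}$, and so
\[
 \ll{\varphi,\etalD}
 = \sum_{z \in \Bil(R)} \delta\ssl(z)^{-2} \, \ll{\varphi, \psilD}
 = \num{\Bil(R)} \cdot \ll{\varphi, \psilD}\,,
\]
since $\delta\ssl(z)^2 = 1$ for all $z$.

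Finally I would compute $\num{\Bil(R)}$ using Proposition~\ref{prop:bil_charact}: since $R$ has discriminant $\id N$ and every local Eichler invariant $e(R\p)$ is nonzero (the standing assumption of Sect.~\ref{sect:orders}), we have $\Bil(R\p) \simeq \Z/2\Z$ exactly for $\id p \mid \id N$ and is trivial otherwise, whence $\Bil(R) \simeq \prod_{\id p \mid \id N} \Z/2\Z$ has order $2^{\omegaN}$. This yields the claimed identity $\ll{\varphi,\etalD} = 2^{\omegaN}\ll{\varphi,\psilD}$. The one small technical point to be careful about is whether one needs $\Bil(R)$ itself (as opposed to $\widetilde\Bil(R)$ modulo $\cl(F)$) to be the relevant group and whether the global $\Bil(R)$ really decomposes as the product of the local $\Bil(R\p)$ — but this is exactly the content of Proposition~\ref{prop:bil_charact} as stated, so there is no genuine obstacle; the proof is essentially a formal manipulation of the pairing together with the order count.
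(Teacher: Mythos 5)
Your argument is correct and is exactly the proof the paper intends: pair the decomposition of $\etalD$ from Proposition~\ref{prop:eta=proy_psi} against $\varphi$, use the $\Bil(R)$-invariance of the height pairing together with $\varphi \cdot z = \delta\ssl(z)\varphi$ so each summand contributes $\ll{\varphi,\psilD}$, and count $\num{\Bil(R)} = 2^{\omegaN}$ via Proposition~\ref{prop:bil_charact} (the global-from-local identification $\Bil(R)\simeq\prod_{\id p\mid\id N}\Bil(R\p)$ you flag is the same standard fact the paper itself uses, e.g.\ in Lemma~\ref{lem:sect5}).
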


\section{Main theorem}\label{sect:main}

Let $g \in \Sk$ be a normalized newform with trivial central character as in the
introduction, with Atkin--Lehner eigenvalues $\varepsilon_g(\id p)$ for $\id p
\mid \id N$.

Fix $(l, \id b)$ a fundamental discriminant with $l \in F^\times$ such that
$\cond l$ is prime to $2 \id{d} \id N$, as in Sect.~\ref{sect:half_int}, and
satisfying also the following conditions.

\begin{enumerate}[label=\textbf{Hl\arabic*.},ref={Hl\arabic*}]
	\item The set $\Sigma\ssl = \mathbf{a} \cup \left\{\id p \mid \id N \;:\; 
	  \chic[*]{l}(\id p)^{v\p(\id N)} \varepsilon_g(\id p) = -1 \right\}$ has
	  even cardinality.
    \label{hyp:paridad}
	\item $\sgn(l) = (-1)^{\mathbf{k}}$. \label{hyp:peso_signo}
\end{enumerate}
Such $(l, \id b)$ exists unless $\id N$ is a square and $[F:\Q]$ is odd.
By Hypothesis \ref{hyp:JL} we have that $v\p(\id N)$ is odd for every $\id p \in
\Sigma\ssl$.
Furthermore, assuming that it exists, we fix $u \in \OOx \cap F^l$.
Note that $\sgn(u) = (-1)^\mbk$.

Let $B=B\ssl$ be the quaternion algebra over $F$ ramified exactly at
$\Sigma\ssl$.
Let $\pi$ be the irreducible automorphic representation of
$\GL_2$ corresponding to $g$.
For every prime $\id p$ where $B$ is ramified
$v\p(\id N)$ is odd,
hence the local component of $\pi$ at $\id p$ is square integrable.
It follows that there is an irreducible automorphic representation
$\pi_B$ of $\hatx B$
which corresponds to $\pi$ under the Jacquet-Langlands map.

Let $\mathscr{E}$ denote the set of functions
$\varepsilon:\{\id p\in\mbf \,:\, \id 
p \mid \id N\} \to \{\pm 1\}$ satisfying
\begin{equation}\label{eqn:permitted_eps}
\varepsilon(\id p)^{v\p (\id N)} = \varepsilon_g(\id p) 
\qquad \forall \, \id p \mid \id N\,.
\end{equation}
Note that this set is not empty. This is equivalent to Hypothesis \ref{hyp:JL}.

Fix $\varepsilon \in \mathscr{E}$, and let
$R = R\ssl_{\varepsilon}\subseteq B\ssl$ 
be an order with discriminant $\id N$ and Eichler invariant $e(R\p) = 
\chic[*]{l}(\id p)\,\varepsilon(\id p)$ for every $\id p \mid \id N$. 
Since by $\eqref{eqn:permitted_eps}$ we have that $e(R\p) =
-1$ for $\id p \in \Sigma\ssl$, such order exists, and belongs to the class of
orders considered in Sect.~\ref{sect:orders}.
Furthermore, the lattice $L = R / \OO$ has level $\id N$,
as in Sects.~\ref{sect:half_int} and \ref{sect:special_pts}.
In \cite[Proposition 8.6]{gross-local} it is shown that
$\hatx R$ fixes a unique line in the representation space of $\pi_B$.
This line gives an explicit quaternionic modular form
$\varphi_\varepsilon = \varphi\ssl_{\varepsilon} \in
\mathcal{S}_\mathbf{k}(R,\bbu)$, which is well defined up to a constant.

We let $f_\varepsilon\sslu = \tlu(\varphi_{\varepsilon}) \in \Shk[+]$.
Note that its cuspidality, in the case when Proposition~\ref{prop:cuspidality}
does not apply, was proved in \cite[Proposition 2.5]{waldspu}.

\begin{rmk}\label{rmk:shimura_corresp}
	Let $g' \in \mathcal S_{\mathbf 2 + 2\mbk}(2 \id N)$ be 
	the image of $f\sslu_\varepsilon$ under the Shimura correspondence 
	(as defined in \cite[Theorem 6.1]{shim-hh}).
	We expect that $g'$ has level $\id N$, which by
	Proposition~\ref{prop:hecke_linear} would imply that it is in fact a
	multiple of $g$.
	Furthermore we expect for $u$ and $\varepsilon$ fixed that, up to
	multiplication by a constant, $f\sslu_\varepsilon$ does not depend on $l$.
	When $\id N$ is odd and square-free both facts follow from \cite[Theorem
	9.4]{hiraga-ikeda} and \cite[Theorem 10.1]{su}.
\end{rmk}

\begin{lemma}\label{lem:sect5}
Let $\delta\ssl$ be the character of $\Bil(R)$ given by \eqref{eqn:deltal}.
Then $\varphi_{\varepsilon} \in
\mathcal{S}_\mathbf{k}(R,\mathbbm{1})^{\delta\ssl}$.
\end{lemma}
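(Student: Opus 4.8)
The goal is to show that the quaternionic newform $\varphi_\varepsilon \in \mathcal{S}_\mathbf{k}(R,\bbu)$ transforms under the action of $\Bil(R)$ via the character $\delta\ssl$, i.e. $\varphi_\varepsilon \cdot z = \delta\ssl(z)\,\varphi_\varepsilon = \chic[*]{l}(\norm z)\,\varphi_\varepsilon$ for all $z \in \Bil(R)$. The plan is to exploit the characterization of $\varphi_\varepsilon$ coming from \cite[Proposition 8.6]{gross-local}: it spans the unique $\hatx R$-fixed line in the representation space of $\pi_B$. Since $\Bil(R) = \hatx R \backslash N(\widehat R)/\hatx F$ acts on this line, and a line carries a unique action of a group, the action must be \emph{some} character $\delta$ of $\Bil(R)$; the content is to identify $\delta$ with $\delta\ssl$. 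By Proposition~\ref{prop:bil_charact}, $\Bil(R\p) \simeq \Z/2\Z$ for $\id p \mid \id N$ and is trivial otherwise, so $\Bil(R)$ is an elementary abelian $2$-group generated by local normalizing elements $z\p$ at the primes dividing $\id N$; it therefore suffices to check the claim one prime at a time.

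First I would reduce to a local statement. Fix $\id p \mid \id N$ and let $z\p \in N(R\p)$ be a local generator of $\Bil(R\p)$. The action of $z\p$ on the $\hatx R$-fixed line inside $\pi_B$ is governed by the action of $N(R\p)$ on the (one-dimensional) space of $R\p^\times$-fixed vectors in the local component $\pi_{B,\id p}$; this is a purely local problem about the square-integrable representation $\pi_{B,\id p}$ and the Eichler order $R\p$ with Eichler invariant $e(R\p) = \chic[*]{l}(\id p)\,\varepsilon(\id p)$. On the $\GL_2$ side, the Atkin--Lehner eigenvalue $\varepsilon_g(\id p)$ records how the Atkin--Lehner element (a normalizer of the local level) acts on the newvector, and the Jacquet--Langlands correspondence matches these actions up to the discrepancy recorded by the Eichler invariant. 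The expected outcome is that $z\p$ acts by $\varepsilon(\id p)$ when $e(R\p) = 1$ and by $\varepsilon(\id p)\cdot(\text{something involving }\chic[*]{l}(\id p))$ in general; combining with \eqref{eqn:permitted_eps}, which says $\varepsilon(\id p)^{v\p(\id N)} = \varepsilon_g(\id p)$, and with the relation $e(R\p)=\chic[*]{l}(\id p)\varepsilon(\id p)$, one should land exactly on $\delta\ssl(z\p) = \chic[*]{l}(\norm z\p)$.

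The key steps in order: (1) invoke \cite[Proposition 8.6]{gross-local} to get that $\varphi_\varepsilon$ spans a line stable under $\Bil(R)$, hence $\varphi_\varepsilon \cdot z = \delta(z)\varphi_\varepsilon$ for a character $\delta$ of $\Bil(R)$; (2) use Proposition~\ref{prop:bil_charact} to reduce checking $\delta = \delta\ssl$ to the local generators $z\p$ for $\id p\mid\id N$; (3) at each such $\id p$, compute the action of $z\p$ on the $R\p^\times$-fixed line in $\pi_{B,\id p}$ in terms of $\varepsilon_g(\id p)$, the parity $v\p(\id N)$, and the Eichler invariant $e(R\p)$ — using the local Jacquet--Langlands correspondence and the structure of $N(R\p)/R\p^\times F\p^\times$; (4) combine with \eqref{eqn:permitted_eps} and $e(R\p)=\chic[*]{l}(\id p)\varepsilon(\id p)$ to match with $\chic[*]{l}(\norm z\p)$. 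Finally, since $\varphi_\varepsilon$ already lies in $\mathcal{S}_\mathbf{k}(R,\bbu)$ by construction and $\Bil(R)$ acts on this subspace, no further compatibility check is needed.

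The main obstacle is step (3): the precise bookkeeping of how the local normalizer $z\p$ acts on the newvector of the square-integrable representation $\pi_{B,\id p}$, and how this compares across Jacquet--Langlands to the Atkin--Lehner sign $\varepsilon_g(\id p)$, when the exponent $v\p(\id N)$ may be larger than $1$ and the Eichler invariant $e(R\p)$ may be $+1$ (supercuspidal case) rather than $-1$. One has to be careful that the Eichler invariant, not merely the ramification of $B$, enters the formula for $\delta\ssl(z\p)$; this is exactly why the twist by $\chic[*]{l}(\norm z\p)$ appears. I expect this to follow from the local analysis already implicit in \cite{gross-local} together with standard properties of newvectors under Atkin--Lehner elements, but it is the step requiring genuine local representation-theoretic input rather than formal manipulation.
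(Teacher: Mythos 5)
Your architecture matches the paper's: reduce to the local generators of $\Bil(R)$ via Proposition~\ref{prop:bil_charact}, observe that each generator acts on the line $\C\varphi_\varepsilon$ by a sign $\kappa\p\in\{\pm1\}$, and identify that sign with $\delta\ssl$ on the generator. But the entire content of the lemma sits in your step (3), which you leave as an ``expected outcome'' rather than prove, and the form you guess for it is not correct. The paper settles it with two concrete local facts: the generator $w\p$ of $\Bil(R\p)$ (the explicit element of \cite[Proposition 4.3]{waldspu}) has $\norm(w\p)=-\pi\p^{v\p(\id N)}$, so that, since $\id p\nmid\cond l$, one gets $\delta\ssl(w\p)=\chic[*]{l}(\id p)^{v\p(\id N)}$; and, by \cite[Theorem 2.2.1]{roberts-thesis}, $w\p$ acts on $\varphi_\varepsilon$ by $\kappa\p=\varepsilon_g(\id p)$ when $B$ is split at $\id p$ and by $\kappa\p=-\varepsilon_g(\id p)$ when $B$ is ramified at $\id p$. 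The conclusion is then immediate from the definition of $\Sigma\ssl$ in Hypothesis~\ref{hyp:paridad}: $B$ is ramified at $\id p$ exactly when $\chic[*]{l}(\id p)^{v\p(\id N)}\varepsilon_g(\id p)=-1$, so in both cases $\kappa\p=\chic[*]{l}(\id p)^{v\p(\id N)}=\delta\ssl(w\p)$.

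Two places where your sketch, taken literally, would fail. First, you predict that $z\p$ acts by $\varepsilon(\id p)$ when $e(R\p)=1$; the correct value is $\varepsilon_g(\id p)=\varepsilon(\id p)^{v\p(\id N)}$, which differs from $\varepsilon(\id p)$ whenever $v\p(\id N)$ is even and $\varepsilon(\id p)=-1$, a case permitted by \eqref{eqn:permitted_eps}. Second, the dichotomy that governs the extra sign is ramification of $B$ at $\id p$, not the Eichler invariant $e(R\p)$ by itself (there can be split primes with $e(R\p)=-1$), and to evaluate $\delta\ssl(z\p)=\chic[*]{l}(\norm z\p)$ at all you must know the norm of the chosen generator up to units -- this is exactly where $\norm(w\p)=-\pi\p^{v\p(\id N)}$ and $\id p\nmid\cond l$ enter, a point your step (4) passes over. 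So the missing ingredient is precisely the local representation-theoretic statement you flag as the main obstacle; without it (or an equivalent computation across Jacquet--Langlands of how the local Atkin--Lehner/normalizer element acts on the $R\p^\times$-fixed line) the argument is incomplete.
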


\begin{proof}
Let $\id p$ be a prime dividing $\id N$, and let $w\p \in N(R\p)$
be the generator for $\Bil(R\p)$ given in \cite[Proposition 4.3]{waldspu}.
Since $\norm(w\p) = - \pi\p^{v\p(\id N)}$
and $\id p \nmid \cond l$, we have that $\delta\ssl(w\p) = \chic[*]{l}(\id
p)^{v\p(\id N)}$.

Furthermore, since $w\p$ has order two and normalizes $\hatx R$, it acts on
$\varphi_\varepsilon$ by multiplication by $\kappa\p \in \{\pm 1\}$.
When $B$ is split at $\id p$ we have $\kappa\p = \varepsilon_g(\id p)$,
and $\kappa\p = -\varepsilon_g(\id p)$ when $B$ is ramified at $\id p$
(for instance, see \cite[Theorem 2.2.1]{roberts-thesis}).

Since by Proposition~\ref{prop:bil_charact} we have that $\{w\p\,:\,\id p\mid\id
N\}$ generates $\Bil(R)$, using that $B$ is ramified at $\id p$ if and only if
$\id p \in \Sigma\ssl$, the result follows.
\end{proof}

Given $D \in F^{-l}$, let $\Delta = l D$ and let $K = F(\sqrt{\Delta})$.
Assume that $\cond D$ is prime to $\cond l \id N$.
We say that $D$ is of \emph{type $\varepsilon$} if $\chic[*]{D}(\id p) =
\varepsilon(\id p)$ for all $\id p\mid\id N$.
Note that for $D$ of type $\varepsilon$
we have
$\chic[*]{\Delta}(\id p)^{v\p (\id N)}
= \chic[*]{l}(\id p)^{v\p (\id N)}\varepsilon_g(\id p)$,
hence the set $\Sigma_{\Delta}$ given in \eqref{eq:SigmaD}
is precisely $\Sigma\ssl$, the ramification of $B\ssl$.
Moreover there exists an embedding $\OO[K]\hookrightarrow R$,
as required in Sect.~\ref{sect:height_geom}.
Furthermore, Hypothesis \ref{hyp:paridad} implies that for such $D$ the sign of 
the functional equation for $L_{l,D}(s,g)$ equals $1$. 

Let $c_g$ be the positive real number given by
\[
 c_g = {\ll{g,g}}\,
 \:
 \frac{\dF^{\;1/2}}{\hF}
 \:
 \frac
 {c(\mathbf{k})\:C(\id N)}
 {2^{2\omegaN}}\,,
\]
where $c(\mathbf{k})$ and $C(\id N)$ are as in Theorem~\ref{thm:xue}.
As required by that theorem, assume for the rest of this section that $\id N
\subsetneq \OO$.

\begin{thm}\label{thm:main_thm}
For every $D \in F^{-l}$ of type $\varepsilon$ such that $\cond D$ is
prime to $2 \cond l \id N$ we have
\begin{equation}\label{eqn:main_thm}
 L_{l,D}(1/2,g) =
 c_g\,
 \frac{c_{\Delta}}{(-\Delta)^\mathbf{k+1/2}}\,
 \frac{
 \abs{ \lambda(-uD,\id a;f_\varepsilon\sslu) }^2}
 {\ll{\varphi_\varepsilon,\varphi_\varepsilon}}
 \,,
\end{equation}
where $\id a\in\IF$ is the unique ideal such that $(D,\id a)$ is a fundamental
discriminant and $c_{\Delta}$ is the positive rational number given by
$c_{\Delta}=\mK^{\;2}\,\idn{ab}$.
\end{thm}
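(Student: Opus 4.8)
The plan is to combine Theorem~\ref{thm:xue} with the special-point identity of Proposition~\ref{prop:coef_serie_theta} and the comparison results of Section~\ref{sect:orders}. First I record that the standing hypotheses make all of these applicable: since $\cond l$ is prime to $2\id d\id N$ and $\cond D$ is prime to $2\cond l\id N$, the conductor $\cond\Delta=\cond l\cond D$ is prime to $2\id N$ and $\cond l,\cond D$ are coprime, so Theorem~\ref{thm:xue} applies; since $D$ is of type $\varepsilon$ the set $\Sigma_\Delta$ of \eqref{eq:SigmaD} coincides with $\Sigma\ssl$, so $B=B\ssl$ and $R=R\ssl_\varepsilon$ are the algebra and order relative to which Sections~\ref{sect:height_geom}--\ref{sect:orders} were set up, and there is an embedding $\OO[K]\hookrightarrow R$; and $(\cond\Delta:\id N)=1$, so Proposition~\ref{prop:eta=proy_psi} and Corollary~\ref{coro:psi_vs_eta} apply. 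Write $\psilD=\psi_{\Xi,P_\Delta}$ and $\etalD$ for the forms of Sections~\ref{sect:height_geom}--\ref{sect:orders}, attached to the fundamental discriminant $(D,\id a)$, equivalently $(\Delta,\id{ab})$, of the statement.

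The first step is to evaluate the pairing $\ll{\Tg\psilD,\psilD}$ appearing in Theorem~\ref{thm:xue}. By the Jacquet--Langlands correspondence, multiplicity one, and \cite[Proposition 8.6]{gross-local}, the $g$-isotypical subspace of $\mathcal S_\mathbf{k}(R,\bbu)$ is the line $\C\varphi_\varepsilon$. Since $\Tg$ annihilates the Eisenstein part $\mathcal E_\mathbf{k}(R)$ and the isotypical pieces of $\mathcal S_\mathbf{k}(R,\bbu)$ are mutually orthogonal for the height pairing, $\Tg$ is the orthogonal projection onto $\C\varphi_\varepsilon$; hence $\Tg\psilD=\ll{\psilD,\varphi_\varepsilon}\,\varphi_\varepsilon/\ll{\varphi_\varepsilon,\varphi_\varepsilon}$ and
\[
  \ll{\Tg\psilD,\psilD}=\frac{\abs{\ll{\varphi_\varepsilon,\psilD}}^2}{\ll{\varphi_\varepsilon,\varphi_\varepsilon}}.
\]

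Next I express $\ll{\varphi_\varepsilon,\psilD}$ through the Fourier coefficient of $f_\varepsilon\sslu$. By Lemma~\ref{lem:sect5}, $\varphi_\varepsilon\in\mathcal S_\mathbf{k}(R,\bbu)^{\delta\ssl}$, so Corollary~\ref{coro:psi_vs_eta} gives $\ll{\varphi_\varepsilon,\etalD}=2^{\omegaN}\ll{\varphi_\varepsilon,\psilD}$. Since $(D,\id a)$ is fundamental we have $\eta\ssl_{D,\id a}=\etalD$ (see \eqref{eqn:eta_fund}), so applying Proposition~\ref{prop:coef_serie_theta} to $\varphi_\varepsilon$ and using $f_\varepsilon\sslu=\tlu(\varphi_\varepsilon)$ yields
\[
  \lambda(-uD,\id a;f_\varepsilon\sslu)=\tfrac1{\idn{a}}\ll{\varphi_\varepsilon,\etalD}=\tfrac{2^{\omegaN}}{\idn{a}}\ll{\varphi_\varepsilon,\psilD}.
\]
Combining the two displays,
\[
  \ll{\Tg\psilD,\psilD}=\frac{\idn{a}^{2}}{2^{2\omegaN}}\cdot\frac{\abs{\lambda(-uD,\id a;f_\varepsilon\sslu)}^2}{\ll{\varphi_\varepsilon,\varphi_\varepsilon}}.
\]
Substituting this into Theorem~\ref{thm:xue} and absorbing the factors depending only on $g$ into $c_g$, one is left with matching a ratio of explicit positive rationals. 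A direct computation does this, using that $(\Delta,\id{ab})$ being a fundamental discriminant controls $\mathcal N(\cond\Delta)$ in terms of $\id{ab}$ and the totally positive element $-\Delta$, together with $(-\Delta)^{\mathbf k+1/2}=(-\Delta)^{\mathbf k}\prod_{\tau\in\mba}(-\Delta)_\tau^{1/2}$; it identifies the ratio with $c_\Delta/(-\Delta)^{\mathbf k+1/2}$ for $c_\Delta=\mK^{2}\idn{ab}$, which is \eqref{eqn:main_thm}.

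The main obstacle is the first step: identifying the $g$-isotypical part of the quaternionic cuspidal space as exactly the line $\C\varphi_\varepsilon$ and $\Tg$ as the orthogonal projection onto it. This rests on the Jacquet--Langlands transfer $\pi\mapsto\pi_B$, which exists thanks to Hypothesis~\ref{hyp:JL} (so that the local components of $\pi$ at the ramified primes are square integrable), on multiplicity one, and on the local computation of \cite{gross-local} that $\hatx R$ fixes a unique line in $\pi_B$ for the Eichler-type orders with the prescribed invariants. Everything after that is a chain of formal identities for the height pairing together with the bookkeeping of the explicit constants.
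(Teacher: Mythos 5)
Your proposal is correct and follows essentially the same route as the paper's own proof: evaluate $\ll{\Tg\psilD,\psilD}$ by projecting onto the line $\C\varphi_\varepsilon$, convert to $\ll{\varphi_\varepsilon,\etalD}$ via Lemma~\ref{lem:sect5} and Corollary~\ref{coro:psi_vs_eta}, convert that to the Fourier coefficient via Proposition~\ref{prop:coef_serie_theta}, and substitute into Theorem~\ref{thm:xue}. The only differences are expository: you spell out the hypothesis checks and the Jacquet--Langlands/multiplicity-one justification that $\Tg$ is the orthogonal projection onto $\C\varphi_\varepsilon$, which the paper states without comment.
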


\begin{rmk}
	The sign of the functional equation for $L(s,g\otimes\chi\ssl)$ is equal to
	$\chic[*]{l}(\id N) \sgn(l) (-1)^\mathbf{1+k} \prod_{\id p \mid \id N}
	\varepsilon\subp(g)$.
	Under Hypotheses~\ref{hyp:JL} and \ref{hyp:paridad},
	we see that it  equals $\sgn(l) (-1)^\mbk$. Hence if we do not assume
	Hypothesis~\ref{hyp:peso_signo} both sides of \eqref{eqn:main_thm} vanish
	trivially for every $D$ satisfying the hypotheses above.
\end{rmk}

\begin{proof}

Let $\Tg$ be the polynomial in the Hecke operators prime to $\id N$ giving the 
$g$-isotypical projection.
Let $\psilD$ and $\etalD$ be as in 
Corollary~\ref{coro:psi_vs_eta}.
Since $\Tg\psilD$ is the $\varphi_\varepsilon$-isotypical projection of 
$\psilD$ we have that
$\Tg\psilD = 
\frac{\ll{\psilD,\varphi_\varepsilon}}{\ll{\varphi_\varepsilon,
\varphi_\varepsilon }}
\, \varphi_\varepsilon$.
Combining this with 
Proposition~\ref{prop:coef_serie_theta}, Corollary~\ref{coro:psi_vs_eta} and 
Lemma~\ref{lem:sect5}, we get that
\[
	\ll{\Tg\psilD, \psilD} = 
    \frac{\abs{ \ll{\psilD,\varphi_\varepsilon} }^2}
         {\ll{\varphi_\varepsilon,\varphi_\varepsilon}}
    =
    \frac{\abs{ \ll{\etalD,\varphi_\varepsilon}}^2}
         {2^{2\omegaN} \,
          \ll{\varphi_\varepsilon,\varphi_\varepsilon}}
    =
    \frac{\idn a^2}{2^{2\omegaN}} \,
    \frac{\abs{\lambda(-uD,\id a; f_\varepsilon\sslu)}^2}
         {\ll{\varphi_\varepsilon,\varphi_\varepsilon}}\,.
\]
Then \eqref{eqn:main_thm} follows from Theorem~\ref{thm:xue}.
\end{proof}

\begin{coro}\label{coro:family}
 
Assume that $L(1/2,g\otimes \chic{l}) \neq 0$. Then $f_\varepsilon\sslu \neq 0$.
Moreover, the set $\{f_\varepsilon\sslu \,:\, \varepsilon \in \mathscr{E}\ssl\}$
is linearly independent.

\end{coro}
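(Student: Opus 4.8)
The plan is to deduce both assertions from the main formula \eqref{eqn:main_thm}. First I would fix a $D \in F^{-l}$ of type $\varepsilon$ with $\cond D$ prime to $2\cond l\,\id N$; such $D$ exist because the type-$\varepsilon$ condition and the conductor condition impose only finitely many local constraints, which can be satisfied simultaneously by a suitable choice. For this $D$ the hypothesis $L(1/2,g\otimes\chic{l})\neq 0$ together with the choice of $D$ (and the fact that, $D$ being of type $\varepsilon$, the set $\Sigma_\Delta$ coincides with $\Sigma\ssl$, so Theorem~\ref{thm:xue} applies) guarantees that the left-hand side $L_{l,D}(1/2,g) = L(1/2,g\otimes\chic{l})\,L(1/2,g\otimes\chic{D})$ is nonzero, provided $D$ is moreover chosen so that $L(1/2,g\otimes\chic{D})\neq 0$. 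The existence of such a $D$ follows from the nonvanishing results for quadratic twists of $g\otimes\chic{l}$ (or equivalently, from the fact that $\psilD$ cannot vanish for all admissible $D$). Then \eqref{eqn:main_thm} forces $\lambda(-uD,\id a; f\sslu_\varepsilon)\neq 0$, hence $f\sslu_\varepsilon\neq 0$.

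For the linear independence, the key observation is that the formula \eqref{eqn:main_thm} expresses $\abs{\lambda(-uD,\id a; f\sslu_\varepsilon)}^2$, up to a nonzero factor depending only on $\Delta$ and on $\ll{\varphi_\varepsilon,\varphi_\varepsilon}$, as $L_{l,D}(1/2,g)$, which is \emph{independent of $\varepsilon$}. Moreover the Fourier coefficient $\lambda(-uD,\id a; f\sslu_\varepsilon)$ vanishes unless $D$ is of type $\varepsilon$: indeed, tracing through the construction in Sect.~\ref{sect:special_pts}, the coefficient is built from special points for the discriminant $(\Delta,\id c)$, which exist only when an embedding $\OO[K]\hookrightarrow R\ssl_\varepsilon$ exists, and by Proposition~\ref{prop:action_transitive} (together with the local computation of Eichler invariants) this happens exactly when $\Sigma_\Delta = \Sigma\ssl$, i.e.\ when $\chic[*]{D}(\id p) = \varepsilon(\id p)$ for all $\id p\mid\id N$. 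Thus for each $\varepsilon$ there is a $D_\varepsilon$ (of type $\varepsilon$, with appropriate conductor) such that $\lambda(-uD_\varepsilon,\id a; f\sslu_{\varepsilon'})\neq 0$ if $\varepsilon' = \varepsilon$ and $= 0$ if $\varepsilon'\neq\varepsilon$.

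Given this, suppose $\sum_{\varepsilon\in\mathscr{E}\ssl} c_\varepsilon\, f\sslu_\varepsilon = 0$. Applying the functional $f\mapsto\lambda(-uD_{\varepsilon_0},\id a; f)$ for a fixed $\varepsilon_0$ and using the vanishing-unless-type-$\varepsilon_0$ property, all terms with $\varepsilon\neq\varepsilon_0$ drop out, leaving $c_{\varepsilon_0}\,\lambda(-uD_{\varepsilon_0},\id a; f\sslu_{\varepsilon_0}) = 0$; since this coefficient is nonzero by the first part (applied with $D = D_{\varepsilon_0}$), we get $c_{\varepsilon_0} = 0$. As $\varepsilon_0$ was arbitrary, the family is linearly independent.

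The main obstacle I anticipate is establishing cleanly that $\lambda(-uD,\id a; f\sslu_\varepsilon) = 0$ whenever $D$ is \emph{not} of type $\varepsilon$: one must check that no embedding $\OO[\Delta,\id c]\hookrightarrow R\ssl_\varepsilon$ exists in that case, which requires matching the local optimal-embedding conditions against the Eichler invariants $e(R\p) = \chic[*]{l}(\id p)\varepsilon(\id p)$ at each $\id p\mid\id N$ (and, since $\eta\ssl_{D,\id a}$ is set to zero precisely when such an embedding fails to exist, the Fourier coefficient then vanishes by Proposition~\ref{prop:coef_serie_theta}). The secondary point needing care is the existence of a $D$ of prescribed type $\varepsilon$ with $L(1/2,g\otimes\chic{D})\neq 0$, which I would cite from the known nonvanishing results for quadratic twists in the direction with the correct sign of the functional equation (the sign being $+1$ for $D$ of type $\varepsilon$, by Hypothesis~\ref{hyp:paridad}, as noted just before the theorem).
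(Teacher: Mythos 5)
Your proposal is correct and takes essentially the same route as the paper: the paper also produces the required $D$ via Waldspurger's nonvanishing theorem for quadratic twists (Th\'eor\`eme 4 of \cite{waldspu-corresp}, applied after twisting an initial $D_\varepsilon$ of type $\varepsilon$ by a suitable $\xi$), then invokes Theorem~\ref{thm:main_thm} to get a nonzero coefficient, and deduces linear independence from the same vanishing $\lambda(-uD_\varepsilon,\id a_\varepsilon;f\sslu_{\varepsilon'})=0$ for $\varepsilon'\neq\varepsilon$ that you justify via the failure of the embedding $\OO[K]\hookrightarrow R\ssl_{\varepsilon'}$ (whence $\eta\ssl_{D,\id a}=0$ and Proposition~\ref{prop:coef_serie_theta} applies). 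The only minor imprecision is attributing the sign $+1$ of the functional equation of $L(s,g\otimes\chic{D})$ to Hypothesis~\ref{hyp:paridad} alone: as in the paper's argument, one also needs that the sign for $L(s,g\otimes\chic{l})$ is $+1$, which follows from the hypothesis $L(1/2,g\otimes\chic{l})\neq 0$.
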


\begin{proof}
 
Let $D_{\varepsilon} \in F^{-l}$ be of type $\varepsilon$.
Since both signs of the functional equations for $L_{l,D_{\varepsilon}}(s,g)$ 
and $L(s, g\otimes \chic{l})$ are equal to $1$, the same must hold for the 
sign for $L(s, g\otimes \chic{D_\varepsilon})$.
Hence by \cite[Th\'{e}or\`{e}me 4]{waldspu-corresp}  there exists 
$\xi \in F^{l}$
such that if we let $D'_\varepsilon = \xi D_\varepsilon$ 
then $D'_\varepsilon$ is of type $\varepsilon$, the conductor
$\cond{D'_\varepsilon}$ is prime to $2\cond{l}\id N$ and
$L(1/2, (g\otimes \chic{D_\varepsilon}) \otimes \chic{\xi})
= L(1/2, g\otimes \chic{D'_\varepsilon})\neq 0$.
Then by \eqref{eqn:main_thm} we have that 
$\lambda(-uD'_\varepsilon,\id a_\varepsilon;f_\varepsilon\ssl) \neq 0$, where 
$(D'_\varepsilon, \id a_\varepsilon)$ is the discriminant satisfying 
$D'_\varepsilon \id a_\varepsilon^2 = \cond{D'_\varepsilon}$.
This proves the first assertion. 
The second assertion follows from the fact that if $\varepsilon' \neq 
\varepsilon$ then $\lambda(-uD_\varepsilon,\id
a_\varepsilon;f_{\varepsilon'}\sslu)=0$.
\end{proof}

We say that $D\in F^{-l}$ is \emph{permitted}
if $\cond{D}$ is prime to $2\id N$ and 
$\chic[*]{D}(\id p)=\varepsilon_g(\id p)$
for all $\id p\mid\id N$ such that
$v\p(\id N)$ is odd.
By Hypothesis~\ref{hyp:JL},
every permitted $D$ is of type $\varepsilon$
for some $\varepsilon\in\mathscr{E}$.

\begin{coro} \label{coro:main}
There exists $f\sslu \in \Mhk$ such that
 for every permitted $D$ with $\cond D$ prime to $\cond{l}$ we have
 \[
  L_{l,D}(1/2,g)
  = \frac{c_\Delta}{(-\Delta)^\mathbf{k+1/2}}
  \,\abs{ \lambda(-uD,\id a;f\sslu) }^2\,,
 \]
 where $\id a\in\IF$ is the unique
 ideal such that $(D,\id a)$ is a fundamental discriminant.
 Moreover, if $L(1/2,g \otimes \chic{l}) \neq 0$, 
 then $f \sslu \neq 0$.
\end{coro}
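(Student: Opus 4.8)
The plan is to assemble Corollary~\ref{coro:main} from Theorem~\ref{thm:main_thm} and Corollary~\ref{coro:family} by averaging the forms $f\sslu_\varepsilon$ over $\varepsilon\in\mathscr E$. First I would observe that, by Hypothesis~\ref{hyp:JL}, every permitted $D$ is of type $\varepsilon$ for exactly one $\varepsilon\in\mathscr E$: the condition $\chic[*]{D}(\id p)=\varepsilon_g(\id p)$ for $\id p$ with $v\p(\id N)$ odd pins down $\varepsilon(\id p)$ at those primes via \eqref{eqn:permitted_eps}, and at the primes where $v\p(\id N)$ is even any sign is allowed, so there could be several $\varepsilon$ compatible with a given $D$; but the extra freedom is irrelevant because a given $D$ is of type $\varepsilon$ only when $\chic[*]{D}(\id p)=\varepsilon(\id p)$ for \emph{all} $\id p\mid\id N$. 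So each permitted $D$ is of type $\varepsilon$ for a unique $\varepsilon$.

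Next I would define $f\sslu = \sum_{\varepsilon\in\mathscr E} a_\varepsilon\, f\sslu_\varepsilon$ for suitable nonzero scalars $a_\varepsilon$, and verify the Fourier coefficient identity. The key point is orthogonality of the Fourier supports: if $D$ is of type $\varepsilon$ and $\varepsilon'\neq\varepsilon$, then $\lambda(-uD,\id a;f\sslu_{\varepsilon'})=0$ — this is exactly the vanishing statement used at the end of the proof of Corollary~\ref{coro:family}, coming from the fact that $f\sslu_{\varepsilon'}=\tlu(\varphi_{\varepsilon'})$ and the special point construction only sees discriminants associated to the ramification set $\Sigma^l$ attached to $\varepsilon'$, which differs from $\Sigma_\Delta=\Sigma^l$ forced by $D$ being of type $\varepsilon$. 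Hence for $D$ of type $\varepsilon$ with $\cond D$ prime to $\cond l$, $\lambda(-uD,\id a;f\sslu)=a_\varepsilon\,\lambda(-uD,\id a;f\sslu_\varepsilon)$, and plugging into \eqref{eqn:main_thm} gives
\[
 L_{l,D}(1/2,g)
 = c_g\,\frac{c_\Delta}{(-\Delta)^\mathbf{k+1/2}}\,
 \frac{\abs{\lambda(-uD,\id a;f\sslu_\varepsilon)}^2}
      {\ll{\varphi_\varepsilon,\varphi_\varepsilon}}
 = \frac{c_\Delta}{(-\Delta)^\mathbf{k+1/2}}\,
   \frac{\abs{a_\varepsilon}^{-2}c_g}{\ll{\varphi_\varepsilon,\varphi_\varepsilon}}\,
   \abs{\lambda(-uD,\id a;f\sslu)}^2\,.
\]
So I would choose $a_\varepsilon$ (real, positive) with $\abs{a_\varepsilon}^2 = c_g/\ll{\varphi_\varepsilon,\varphi_\varepsilon}$, which is legitimate since $c_g>0$ and $\ll{\varphi_\varepsilon,\varphi_\varepsilon}>0$ (the height pairing is positive definite on cusp forms, and $\varphi_\varepsilon\neq 0$). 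With this choice the stated formula holds, and since $f\sslu\in\Shk[+]\subseteq\Mhk$ the membership claim is immediate.

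Finally, for the last assertion: if $L(1/2,g\otimes\chic l)\neq 0$ then Corollary~\ref{coro:family} gives $f\sslu_\varepsilon\neq 0$ for every $\varepsilon\in\mathscr E$, and moreover the family $\{f\sslu_\varepsilon\}_{\varepsilon\in\mathscr E}$ is linearly independent; since the $a_\varepsilon$ are all nonzero, $f\sslu=\sum_\varepsilon a_\varepsilon f\sslu_\varepsilon\neq 0$. I do not anticipate a serious obstacle here — the whole content is bookkeeping — but the one point deserving care is the claim that each permitted $D$ is of type $\varepsilon$ for a \emph{unique} $\varepsilon\in\mathscr E$ and that for the others the coefficient vanishes; this is where Hypothesis~\ref{hyp:JL} and the precise definition of "type $\varepsilon$" (demanding the sign condition at \emph{all} $\id p\mid\id N$, not only the odd ones) are doing the real work, and I would state it as a short preliminary lemma before defining $f\sslu$.
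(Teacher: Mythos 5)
Your proposal is correct and is essentially the paper's proof: the paper takes precisely your linear combination, $f\sslu = c_g^{1/2}\sum_{\varepsilon\in\mathscr{E}} f\sslu_\varepsilon\,/\ll{\varphi_\varepsilon,\varphi_\varepsilon}^{1/2}$, and deduces the formula from Theorem~\ref{thm:main_thm} together with the same cross-coefficient vanishing $\lambda(-uD,\id a;f\sslu_{\varepsilon'})=0$ for $\varepsilon'\neq\varepsilon$ that is invoked at the end of the proof of Corollary~\ref{coro:family}, plus the observation (from Hypothesis~\ref{hyp:JL}) that every permitted $D$ is of type $\varepsilon$ for a unique $\varepsilon\in\mathscr{E}$. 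One minor correction to your justification of the vanishing: it is not that the ramification set differs with $\varepsilon$ (the algebra $B\ssl$ and $\Sigma\ssl$ are the same for all $\varepsilon\in\mathscr{E}$); rather, the orders $R\ssl_{\varepsilon'}$ have Eichler invariants $\chic[*]{l}(\id p)\,\varepsilon'(\id p)$, so for $D$ not of type $\varepsilon'$ the local embedding of $\OO[\Delta,\id c]$ into $R\ssl_{\varepsilon'}$ fails at some $\id p\mid\id N$, whence $\eta\ssl_{D,\id a}=0$ and the coefficient vanishes by Proposition~\ref{prop:coef_serie_theta}.
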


\begin{proof}
 This follows from Theorem~\ref{thm:main_thm} and Corollary~\ref{coro:family}, 
 letting
 \[
    f\sslu = c_g^{1/2}\displaystyle\sum_{\varepsilon \in \mathscr{E}} 
\frac{f\sslu_\varepsilon}{\ll{\varphi_\varepsilon,\varphi_\varepsilon}^{1/2}}
    \,.
    \qedhere
 \]
\end{proof}

Finally, we show that under certain hypotheses on the level and weight our
construction gives preimages under the Shimura correspondence.

\begin{coro}\label{coro:shimura_corresp}

Assume that $\id N$ is odd and square-free, and that there exists $u \in \OOx$
such that $\sgn(u) = (-1)^\mbk$. 
Then there exists $l \in F^\times$ such that $f\sslu_\varepsilon$ maps to $g$
under the Shimura correspondence for every $\varepsilon \in \mathcal E$. 

\end{coro}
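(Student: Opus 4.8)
The plan is to exhibit a single $l\in F^\times$ that satisfies all the hypotheses imposed on it in Section~\ref{sect:main} for the given unit $u$ and, moreover, has $L(1/2,g\otimes\chic{l})\neq 0$; granting this, the statement follows by combining Corollary~\ref{coro:family} with the discussion in Remark~\ref{rmk:shimura_corresp}. The hard part will be securing this non-vanishing while simultaneously keeping Hypotheses~\ref{hyp:paridad} and~\ref{hyp:peso_signo} and the coprimality of $\cond l$ with $2\id d\id N$; I would handle it by a twisting argument resting on a non-vanishing theorem for quadratic twists with prescribed local conditions.

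First I would fix a starting point. Since $\id N$ is square-free and $\id N\subsetneq\OO$ it is not a square, so by the existence statement in Section~\ref{sect:main} (prescribing in addition, via weak approximation, the archimedean signs to be those of $u$) there is $l_0\in F^\times$ with $l_0\in F^u$, with $\cond{l_0}$ prime to $2\id d\id N$, forming a fundamental discriminant, and satisfying Hypothesis~\ref{hyp:paridad}; Hypothesis~\ref{hyp:peso_signo} then holds because $\sgn(l_0)=\sgn(u)=(-1)^\mbk$, and $u\in\OOx\cap F^{l_0}$ as required in Section~\ref{sect:main}. By the remark following Theorem~\ref{thm:main_thm}, Hypotheses~\ref{hyp:JL}, \ref{hyp:paridad} and~\ref{hyp:peso_signo} force the sign of the functional equation of $L(s,g\otimes\chic{l_0})$ to be $+1$.

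Next I would twist. Applying \cite[Th\'{e}or\`{e}me~4]{waldspu-corresp} to $g\otimes\chic{l_0}$, I would obtain a totally positive $\xi\in F^\times$ with $\cond{\xi}$ prime to $2\id d\id N\cond{l_0}$ and with $\xi\in(F\p^\times)^2$ for every $\id p\mid\id N$, such that $L(1/2,g\otimes\chic{l})\neq 0$, where $l:=\xi l_0$. These local conditions on $\xi$ are admissible because $\chic{\xi}$ is then unramified at every finite place where $g\otimes\chic{l_0}$ is ramified and trivial at the archimedean places, so twisting by $\chic{\xi}$ changes no local root number (at the places $\id p\mid\id N$ one uses that $\xi$ is a local square); hence the central sign of $g\otimes\chic{l}=(g\otimes\chic{l_0})\otimes\chic{\xi}$ stays $+1$ and the theorem applies. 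One then checks that $l$ inherits the required properties: $\cond l$ divides $\cond{\xi}\cond{l_0}$ and so is prime to $2\id d\id N$; since $\xi$ is totally positive one has $l\in F^u$, so Hypothesis~\ref{hyp:peso_signo} holds and $u\in\OOx\cap F^l$; and since $\chic{l}=\chic{\xi}\chic{l_0}$ with $\chic[*]{\xi}(\id p)=1$ for all $\id p\mid\id N$, we get $\chic[*]{l}(\id p)=\chic[*]{l_0}(\id p)$ there, whence $\Sigma\ssl=\Sigma^{l_0}$ and Hypothesis~\ref{hyp:paridad} holds. Thus $l$, together with the unique $\id b$ making $(l,\id b)$ a fundamental discriminant, satisfies all the hypotheses of Section~\ref{sect:main}, and $f\sslu_\varepsilon$ is defined for every $\varepsilon\in\mathcal E$.

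Finally, since $L(1/2,g\otimes\chic{l})\neq 0$, Corollary~\ref{coro:family} gives $f\sslu_\varepsilon\neq 0$ for every $\varepsilon\in\mathcal E$. By Proposition~\ref{prop:hecke_linear}, and since $\varphi_\varepsilon$ corresponds to $g$ under Jacquet--Langlands, the nonzero form $f\sslu_\varepsilon=\tlu(\varphi_\varepsilon)$ is a Hecke eigenform away from $2\id N\cond l$ carrying the Hecke eigenvalues of $g$ at those primes. As $\id N$ is odd and square-free and $\sgn(u)=(-1)^\mbk$, Remark~\ref{rmk:shimura_corresp}, which rests on \cite[Theorem~9.4]{hiraga-ikeda} and \cite[Theorem~10.1]{su}, shows that the Shimura lift of $f\sslu_\varepsilon$ has level $\id N$ and is a multiple of $g$, and that the Shimura map is injective on the relevant Kohnen plus space; since $f\sslu_\varepsilon\neq 0$, that lift is nonzero, hence a nonzero multiple of $g$. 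Therefore $f\sslu_\varepsilon$ maps to $g$ under the Shimura correspondence for every $\varepsilon\in\mathcal E$, as claimed.
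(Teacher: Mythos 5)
Your proposal is correct and follows essentially the same route as the paper: choose $l$ satisfying the hypotheses of Sect.~\ref{sect:main} with $L(1/2,g\otimes\chic{l})\neq 0$ via Waldspurger's Th\'eor\`eme 4, then conclude from Corollary~\ref{coro:family} together with Remark~\ref{rmk:shimura_corresp}. You merely expand the paper's one-line choice of $l$ into a two-step construction (first prescribing the archimedean signs of $l_0$ to match $u$, so that $u\in F^l$ genuinely holds at every place, then twisting by a totally positive $\xi$ that is a local square at $\id p\mid\id N$ to secure non-vanishing while preserving Hypotheses~\ref{hyp:paridad} and \ref{hyp:peso_signo}), which is a slightly more careful rendering of the same argument.
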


\begin{proof}

	Choose $l \in F^\times$ as in the beginning of this section (which is
	possible since $\id N$ is not a square) such that $L(1/2,g\otimes \chi\ssl)
	\neq 0$ (which is possible by \cite[Th\'{e}or\`{e}me 4]{waldspu-corresp}).
	Then since $\sgn(l) = (-1)^\mbk$ we have that $u \in F^l$, so that we are in
	the setting of this section. Hence the result follows by Remark
	\ref{rmk:shimura_corresp} and Corollary \ref{coro:family}.
\end{proof}

\section*{Epilogue}

Given $l \in F^\times$, there might not exist $u \in \OOx \cap F\ssl$, which is
required for defining the holomorphic theta series \eqref{eqn:theta}.
Nevertheless, the general results of \cite[Sect.~11]{shim-hh} allow to
consider the theta series
\begin{equation*}
	\vartheta\ssl(z) = \vartheta\ssl(z;w,P)
	= \sum_{y\in \id b^{-1}L} w(y; \id b) \, P(y) 
	\, e_F\ssl(-\disc(y) /l,z/2)\,,
\end{equation*}
where the exponential function is defined by
\[
	e_F\ssl  : F\times \HH^\mathbf{a} \to \C, \quad (\xi, z) \mapsto 
	\exp{\left(2\pi i \Big(
				\sum_{l_\tau > 0} \xi_\tau\, z_\tau +
				\sum_{l_\tau < 0} \xi_\tau\, \overline{z_\tau}
	\Big)\right)}.
\]
This theta series is holomorphic in the variables $\tau \in \mba$ such that
$l_\tau > 0$, and skew-holomorphic in the remaining variables. 
Following the lines of Proposition~\ref{prop:modularity} we can prove that they
are modular, with respect to certain skew-holomorphic automorphy factor
depending on the signs of $l$.

Furthermore, since the results on special points of
Sect.~\ref{sect:special_pts} and the results of
Sects.~\ref{sect:height_geom} and \ref{sect:orders} do not depend on the
auxiliary parameter $u$, we can obtain as in Corollary~\ref{thm:main_thm} a
formula
\begin{equation*}\label{eqn:main_thm_skew}
	 L_{l,D}(1/2,g)=
 \frac{c_{\Delta}}{(-\Delta)^\mathbf{k+1/2}}\,
 \frac{\abs{ \lambda(-D,\id a;f_\varepsilon\ssl) }^2}
      {\ll{\varphi_\varepsilon,\varphi_\varepsilon}}.
\end{equation*}
The only difference is that, since we do not have a result on Fourier coefficients
analogous to \eqref{eqn:cfourier}, we \emph{define} for $\vartheta\ssl$ as above
\[
	\lambda(-D,\id a;\vartheta\ssl) = 
	\frac1{\idn a} \sum_{y \in \mathcal{A}_{\Delta,\id c}(L)}
	w(y;\id c) \, P(y). 
\]

So even though, as far as we know, there is yet no theory developed about the
spaces of these skew-holomorphic modular forms, we can still use them to compute
central values.

\end{document}